\numberwithin{equation}{section}
\theoremstyle{plain}
\newtheorem{theorem}{Theorem}[section]
\newtheorem{lemma}[theorem]{Lemma}
\newtheorem{proposition}[theorem]{Proposition}
\newtheorem*{result}{Main result}
\theoremstyle{remark}
\newtheorem*{example}{Example}
\newtheorem{assump}{Assumption}[section]
\newtheorem{remark}{Remark}[section]
\def\range{\mathscr{R}\,}
\def\nulll{\mathscr{N}\,}
\def\trace{\textsf{\text{Tr}}\,}
\def\E{\mathbb{E}}
\def\calH{\mathcal{H}}
\def\modelconstant{ \vartheta_{g,\beta^*}}
\def\Ltwo{L^2(S)}
\def\tbeta{\tilde{\beta}^*}
\newcommand{\id}{\mathfrak{I}}
\newcommand{\bias}{\textsc{bias}}
\newcommand{\bea}{\begin{eqnarray*}}
\newcommand{\eea}{\end{eqnarray*}}
\newcommand{\be}{\begin{eqnarray}}
\newcommand{\ee}{\end{eqnarray}}
\newcommand{\bsp}{\begin{split}}
\newcommand{\esp}{\end{split}}
\newcommand{\ed}{\end{document}}
\newcommand{\btab}{\begin{tabular}}
\newcommand{\etab}{\end{tabular}}
\newcommand{\bc}{\begin{center}}
\newcommand{\ec}{\end{center}}
\newcommand{\bi}{\begin{itemize}}
\newcommand{\ei}{\end{itemize}}
\newcommand{\bfi}{\begin{figure}}
\newcommand{\efi}{\end{figure}}
\newcommand{\ben}{\begin{enumerate}}
\newcommand{\een}{\end{enumerate}}
\newcommand{\bdes}{\begin{description}}
\newcommand{\edes}{\end{description}}
\newcommand{\bay}{\begin{array}}
\newcommand{\eay}{\end{array}}
\newcommand{\bass}{\begin{assumption}}
\newcommand{\eass}{\end{assumption}}
\newcommand{\bthm}{\begin{theorem}}
\newcommand{\ethm}{\end{theorem}}
\newcommand{\blem}{\begin{lemma}}
\newcommand{\elem}{\end{lemma}}
\def\bco{\iffalse}
\def\cov{{\rm cov}}
\def\trace{{\rm trace}}
\def\cp{\citep}
\newcommand{\floor}[1]{\lfloor #1 \rfloor}
\def\references{\bibliography{bib,1-1-19}
\bibliographystyle{abbrvnat}}
\title{\bf Functional linear and single-index models:\\ A unified approach via Gaussian Stein identity}
\author[1]{Krishnakumar Balasubramanian}
\author[1]{Hans-Georg M\"{u}ller}
\author[2]{Bharath K. Sriperumbudur}
\affil[1]{Department of Statistics, University of California, Davis}
\affil[2]{Department of Statistics,
Pennsylvania State University}
\affil[1]{\texttt{\{kbala,hgmueller\}}@ucdavis.edu}
\affil[2]{\texttt{bks18}@psu.edu}
\date{}
\begin{document}

\maketitle
\begin{abstract}
Functional linear and single-index models are core regression methods in functional data analysis and are widely used for performing regression in a wide range of applications when the covariates are random functions coupled with scalar responses. In the existing literature, however, the construction of associated estimators and the study of their theoretical properties is invariably carried out on a case-by-case basis for specific models under consideration. \textcolor{black}{In this work, \textcolor{black}{assuming the predictors are Gaussian processes}, we provide a unified methodological and theoretical framework for estimating the index in functional linear, and its direction in single-index models}. In the latter case, the proposed approach does not require the specification of the link function. In terms of methodology, we show that the reproducing kernel Hilbert space (RKHS) based functional linear least-squares estimator, when viewed through the lens of an \emph{infinite-dimensional Gaussian Stein's identity}, also provides an estimator of the index of the single-index model. Theoretically, we characterize the convergence rates of the proposed estimators for both linear and single-index models.  Our analysis has several key advantages: (i) it does not require restrictive commutativity assumptions for the covariance operator of the random covariates and the integral operator associated with the reproducing kernel; and (ii) the true index parameter can lie outside of the chosen RKHS, thereby allowing for index \textcolor{black}{misspecification} as well as for quantifying the degree of such index \textcolor{black}{misspecification}. Several existing results emerge as special cases of our analysis. 
\end{abstract}

\section{Introduction}

Functional regression with observed random functions as predictors coupled with scalar responses is one of the core tools of functional data analysis 
\cp{rams:91,morr:15,mull:16:3}. The classical model of functional regression is the functional linear model, which emerges for example, when one assumes a joint Gaussian distribution between the predictor process $X(t)$ and response $Y \in \mathbb{R}$ and is given by 
\begin{align}\label{eq:model}
Y = \int_S X(t) \beta^*(t)\,dt + \epsilon = \langle X, \beta^* \rangle_{\Ltwo} +\epsilon,
\end{align}
where $\epsilon$ is an exogenous additive noise such that $\E\left[\epsilon|X\right] = 0$, $\E[\epsilon^2] = \sigma^2$. In the following, we set $S =[0,1]$. A semi-parametric extension of the above model is the functional single-index model, 
\begin{align}\label{eq:main_model}
Y &= g \left(\int_SX(t)\beta^*(t)\,dt  \right) + \epsilon = g \left(\langle X, \beta^* \rangle_{\Ltwo}  \right) + \epsilon,
\end{align}
for some function $g:\mathbb{R}\to\mathbb{R}$. Following standard terminology, the functional parameter $\beta^*$ is referred to as the index parameter, and the function $g$ as the link function. Note that when $g$ is the identity function,  the single-index model in~\eqref{eq:main_model} becomes the functional linear model.

Given $n$ observations $\{(X_i,Y_i)\}_{1 \leq i \leq n}$ that are independent and identically distributed copies of $(X,Y)$, a fundamental problem is to estimate the index parameter $\beta^*$ in $\eqref{eq:main_model}$. It is worth emphasizing here that for the case of single-index models, an efficient estimator for $\beta^*$ is crucial for subsequently obtaining an estimate of the link function $g$; see, for example,~\cite{mull:11:6}. Hence, our focus is on constructing an estimator of $\beta^*$ that does not require information about the link function. To do so, the interaction between the allowed class of link functions and the distribution of the covariate $X$ becomes crucial, \textcolor{black}{as is also the case in the finite-dimensional case~\citep{yang2017high}}. Indeed this has been well-explored in the case of multivariate single-index models (i.e., when $X \in \mathbb{R}^d$). As will be seen below, this is true in the functional setting as well.

In this work, under a Gaussian process assumption on the covariate $X$, \textcolor{black}{we provide a unified reproducing kernel Hilbert space (RKHS) based framework} for estimating the index in functional linear, the direction of the index in the single-index models, for a class of \emph{unknown} link functions. Specifically, we illustrate that the standard functional linear least-squares estimator also provides an efficient estimator of the index parameter in the single-index model under the Gaussian process assumption. While it might come across as a rather surprising observation at first, it has an elementary justification when the functional linear least-squares estimator is viewed through the lens of \emph{infinite-dimensional} analogs of Gaussian Stein's identity. Similar observations have been made in the multivariate setting (see, for example,~\citealp{brillinger2012generalized, li1989regression, plan2016generalized} and~\citealp{yang2017high}), based on the \emph{finite-dimensional} Gaussian Stein's identity. As our index parameter estimator is agnostic to the choice of the link function, it also naturally handles \textcolor{black}{misspecification} with respect to the link function. \textcolor{black}{Furthermore, compared to existing theoretical results, our analysis handles the case when the true index $\beta^*$ is not necessarily contained in the RKHS that is used for estimation.}

\subsection{Main contributions} \label{sec:contributions}
We now elaborate more on our main contributions in this work.\\

\textit{Methodology:} The RKHS-based functional penalized linear least-squares estimator (for a penalty parameter $\lambda > 0$), given by
    \begin{equation}\label{Eq:estim}
    \hat{\beta}\coloneqq\underset{\beta \in \calH}{\arg\min}~~ \dfrac{1}{n}\sum_{i=1}^n \left[Y_i - \langle\beta,X_i \rangle \right]^2  + \lambda \| \beta \|_\calH^2,
\end{equation}
    was proposed and analyzed in~\cite{yuan2010reproducing} for the linear setting, where $\mathcal{H}$ corresponds to an RKHS with the associated norm $\|\cdot\|_\calH$; see~\citet[Chapter 4]{SteChr2008} for an introduction to RKHS. While the minimization of the regularized objective in \eqref{Eq:estim} is over a possibly infinite-dimensional RKHS $\mathcal{H}$, using the classical ideas of the \emph{representer theorem}, it has been shown in \cite{yuan2010reproducing} that the minimizer of \eqref{Eq:estim} can be computed by solving a finite-dimensional regularized linear inverse problem. In the current work, we illustrate that the above estimator that was designed for linear models, rather surprisingly also serves as a good estimator for the direction of the index in functional single-index models for various link functions $g$, when the random covariates $X_i$ follow a Gaussian process. This provides a unified methodology for estimating the index parameter in functional linear and single-index settings, without regard to the specific nature of the link function $g$,  thereby allowing for its \textcolor{black}{misspecification}. 
    
    Our proposed estimator is based on infinite-dimensional extensions of Gaussian Stein's identity. This goes informally as follows: \textcolor{black}{For a zero-mean Gaussian random element $X$ in a separable Hilbert space with covariance operator $C$, i.e., the linear integral operator with kernel $\cov(X(t),X(\textcolor{black}{s}))$, and for smooth enough real-valued functions $f$, it holds that \begin{equation}\mathbb{E}[Xf(X)] = C \mathbb{E}[\nabla f(X)],\label{Eq:stein}\end{equation} where $\nabla$ is the Fr\'echet derivative. Replacing $f(X)$ in \eqref{Eq:stein} by $g(\langle X,\beta^*\rangle)$, we obtain
    $\mathbb{E}[Xg(\langle X,\beta^*\rangle)] =C\mathbb{E}[\nabla g(\langle X,\beta^*\rangle)],$ with the left hand side~being equivalent to $\mathbb{E}[YX]$ since $\mathbb{E}[YX]=\mathbb{E}[Xg(\langle X,\beta^*\rangle)+X\varepsilon]=\mathbb{E}[Xg(\langle X,\beta^*\rangle)]$ and the right hand side being equivalent to $\modelconstant C\beta^*$ with the constant defined as $\modelconstant:=\mathbb{E}[g'(\langle X,\beta^*\rangle)]$ since $C\mathbb{E}[\nabla g(\langle X,\beta^*\rangle)]=C\mathbb{E}[g'(\langle X,\beta^*\rangle)\beta^*]$, where $g'$ is the derivative of $g$. Therefore, for the single-index model, the Gaussian Stein identity reduces to 
    \begin{equation}\mathbb{E}[YX]=\modelconstant C\beta^*,\label{Eq:Gauss-stein}\end{equation} which in turn reduces to the classical ``functional normal equation" $C\beta^*=\E \left[Y X \right]$ when the model is linear, i.e., $\modelconstant=1$ when $g$ is linear; see, for example, \cite{mull:00:1}. \textcolor{black}{This provides a justification for using the estimator in \eqref{Eq:estim} for estimating the direction of the index in the context of single-index models, as long as $\modelconstant\neq 0$.}} 
To the best of our knowledge, applying this viewpoint and Stein's identity is novel, even in the face of the extensive literature on functional linear and single-index regression models. We also emphasize that the use of Stein's identity in this context enables one to work only with unconditional covariance operators, which is in stark contrast with sufficient dimensionality reduction techniques (for example, sliced inverse regression) that require conditional covariance operators to be estimated. \textcolor{black}{The constraint $\modelconstant\neq 0$ places restrictions on the class of link functions $g$ for which the proposed approach could be used. Examples of link functions for which the constraint holds include logistic functions, odd-powered polynomials, and exponential functions. However, an important function for which it does not hold is the quadratic (or even-powered polynomials). This is due to the fact that the odd moments of Gaussians are zero.}

We note that in the following developments, it is assumed throughout that the predictor process $X$ is a zero-mean Gaussian process that is fully observed. However, the assumption that the process is fully observed may be too strict for some relevant applications, where the functional predictors $X_i$ are observed not continuously but rather intermittently on a dense grid of equidistant design points $(t_1, \dots, t_m)$ on the domain $S$.  Also, the measurements taken at these gridpoints may be contaminated with i.i.d. measurement errors $\epsilon_{ij}$, for  $i=1,\dots,n$ and $j=1,\dots,m,$ i.e., one has $m$ (or more) measurement times $t_{ij}$ that form a dense grid on the domain of the predictor functions $X$. In this situation, the data is observed as $X_{ij}=X_i(t_{ij}) + \epsilon_{ij}$ instead of complete trajectories $X_i$. One can then utilize a uniform convergence result that states that when passing the data $(t_{ij},X_{ij})$ through a local linear smoother that utilizes appropriate bandwidth choices, one may obtain for the resulting curve estimates $\hat{X_i}$,: for any $\varepsilon>0$, $\sup_{t \in S} |\hat{X}_i(t)-X_i(t)| =O_p(m^{-1/(3+\varepsilon)})$. The bound on the right-hand side does not depend on $i$ and can be made arbitrarily small by assuming very dense sampling of individual trajectories and thus the error induced by the pre-smoothing step becomes negligible if $m$ is large enough relative to the sample size $n$.  In order not to detract from the main theme of this paper, we refer for further details about the necessary regularity conditions to Corollary 2 in \cite{mull:21:5}; see also \cite{mull:06:11} and~\cite{hall:07:3} for earlier studies on pre-smoothing of functional data.\\
       
\textit{Theory:} 
    While the true index parameter $\beta^*$ could lie either inside or outside the RHKS under consideration (characterized via interpolation spaces determined by a parameter $\alpha$), the estimator $\hat{\beta}$ in \eqref{Eq:estim} always lies in the RKHS by definition. Previous works \citep{yuan2010reproducing,cai2012minimax, tong2018analysis} relied on the assumption that $\beta^*\in\mathcal{H}$. In this work, we relax this assumption and obtain convergence rates for estimating $\beta^*$ using $\hat{\beta}$ by capturing the interaction between the integral operator $T$ associated with the RKHS and the covariance operator $C$ of the Gaussian process, through 
    the decay behavior of the eigenvalues of the operator $\Lambda:= T^{1/2} C T^{1/2}$ and the alignment of its eigenfunctions to those  of $T$. Our main result (Theorem~\ref{thm:masterthm}), stated informally below, captures the interaction between $T$ and $C$. Specialized versions  (Theorems~\ref{thm:commutativeassumption}, \ref{thm:tcteigendecay}, and \ref{thm:alignedeigensystem}) provide the rate of convergence for $\hat{\beta}$ for both linear and single-index models under appropriate eigenvalue decay assumptions. For the case of a linear model, using a variation of  Theorem~\ref{thm:masterthm} (see Theorem~\ref{thm:predictionmasterthm}), we also provide prediction error results without assuming $\beta^*\in\mathcal{H}$ in Theorem~\ref{thm:predictioncommutativeassumption} and recover the results of \cite{cai2012minimax} in Theorem~\ref{thm:predictionnoncommutativeassumption}.
    \begin{result}[Informal]\label{Thm:informal}
Define $\tbeta:=\modelconstant\beta^*$, where $\modelconstant$ is a model constant that depends on $g$ and $\beta^*$. Suppose $\tbeta \in \range(T^\alpha)$ for $\alpha \in (0,1/2]$, $$\E \left[\left(g(\langle X,\tbeta \rangle) - \langle X,\tbeta\rangle\right)^4\right]<\infty,$$ and   
$\emph{\trace}(C^{1/2}) < \infty$. Then, defining $\Lambda:=T^{1/2}CT^{1/2}$ and $\Lambda_\lambda:=\Lambda+\lambda I$, \textcolor{black}{up to constants, with high probability,} we have  
\begin{align*}
\| \hat{\beta} - \tbeta\| 
\lesssim  \bias(\lambda)+\| \Xi \|^{\frac{1}{4}}\left[  \sqrt{ \frac{N(\lambda)}{n}}
+\lambda \sqrt{\frac{\|\Uptheta\| \emph{\trace}(\Uptheta)}{n}}\right], \nonumber
\end{align*}
where $\Xi\coloneqq T \Lambda^{-2}_\lambda T$, $N(\lambda)\coloneqq \emph{\trace}\left(\Lambda^{-1}_\lambda \Lambda\right)$, $\Uptheta \coloneqq T^{\alpha-\frac{1}{2}}  \Lambda^{-1}_\lambda \Lambda \Lambda^{-1}_\lambda T^{\alpha-\frac{1}{2}}$, and the bias factor is given by $\bias(\lambda) \coloneqq \| T^{1/2}\Lambda^{-1}_\lambda T^{1/2}C\tbeta -\tbeta\|$.
\end{result}

In the above result, the definition of $\tbeta$ allows to treat both single-index and linear models in a unified manner with $\modelconstant=1$ when the model is linear. The case  $\alpha=1/2$ corresponds to $\tbeta\in \mathcal{H}$ and $\alpha<1/2$ corresponds to $\tbeta\in L^2(S)\backslash \mathcal{H}$. The smoothness of the target function $\tbeta$ is captured by $\alpha$, i.e., larger values of $\alpha$ are associated with smoother values of  $\tbeta$, where  $\alpha$ controls the behavior of $\bias(\lambda)$ and $\Uptheta$. The behavior of $\| \hat{\beta} - \tbeta\| $ is also controlled by the decay rate of the eigenvalue of $\Lambda$, which in turn is related to the smoothness of $\mathcal{H}$ and that of the covariance function of the Gaussian process. Finally, the degree of alignment between the eigenfunctions of $\Lambda$ and $T$ controls the behavior of $\bias(\lambda)$ and $\Xi$.

We now highlight the differences and benefits of our results compared to the directly related works of~\cite{yuan2010reproducing}, \cite{cai2012minimax} and \cite{tong2018analysis} that consider only the functional \emph{linear} regression in the RKHS setup. All these works consider the estimator in \eqref{Eq:estim} and predominantly provide convergence results for the easier case of prediction error in the linear setup. While~\cite{yuan2010reproducing} consider the problem of estimation in the linear setting, they make the restrictive assumption that the operators $T$ and $C$ commute.~\cite{cai2012minimax} and  \cite{tong2018analysis} do not make the commutativity assumption, however, they do not provide any results for estimation. Furthermore, all these works assume the true index parameter $\beta^*$ to reside inside the RKHS under consideration. In comparison, our results provide a complete characterization of the rates of estimation without the above assumptions, for both linear and single-index models.


\subsection{Related works}
The functional linear model (i.e., the case when $g$ is the identity function) was derived for the Gaussian case by \cite{gren:50} 
and in various statistical settings was considered by many authors,  with early work by~\cite{engle1986semiparametric}, motivated by analyzing the relation between weather and electricity sales. Subsequent work includes~\cite{rams:91,brum:98,card:99,cuev:02, card:03:2} and~\cite{zhu:14}, to mention a few; reviews include \cite{morr:15} and \cite{mull:16:3}. 

Regarding single-index models, \cite{jame:02} and \cite{muller2005generalized} studied functional versions of generalized linear models that feature a single-index where the latter work included nonparametric estimation of the link function.  In subsequent work, \cite{mull:11:6} studied estimators for general single and multiple index models and provided consistency results, while 
\cite{shang2015nonparametric} developed predictive inferential results for generalized linear models \textcolor{black}{when the true index $\beta^*$ lies in Sobolev RKHS spaces} (which is a special case of the well-specified setting). Their approach follows that of~\cite{yuan2010reproducing} and consequently suffers from similar shortcomings as discussed above. Furthermore, the estimators in the above works depend on the specification of the link function $g$.
Several works, for example,~\cite{hsing2009rkhs, li2017nonlinear, jiang2011functional, li2010deciding} and~\cite{jiang2014inverse}, also considered extension of sufficient dimension reduction methods to the functional data setting, however, only consistency of the estimator is established with no deeper study of convergence rates.


Finally, in the setting of multivariate data, the use of Stein's identity in developing estimation methodology for single and multiple-index models has been well explored. Specifically, we refer to~\cite{li1989regression, plan2016generalized, yang2017high, goldstein2018structured} and~\cite{ goldstein2019non} for the case of single-index models. Similarly, we refer to~\cite{li1991sliced, li1992principal, yang2017estimating} and~\cite{ babichev2018slice} for multiple-index models.

The techniques used in our analysis have a connection to the statistical learning theory literature on analyzing kernel ridge regression methods and linear inverse problems. A comprehensive operator theoretic analysis of kernel ridge regression was provided in~\cite{caponnetto2007optimal} building on the seminal works of~\cite{cucker2002mathematical,de2005learning} and \cite{smale2005shannon}. We also refer the interested reader to the works of~\cite{wu2006learning, smale2007learning,wang2011optimal,  hsu2014random, dicker2017kernel} and~\cite{lin2020optimal} for other related works. \textcolor{black}{Compared to our work, the above works are predominantly focused on excess error bounds for nonlinear regression in the learning theory framework. Another key difference of these works from ours is that they do not involve the covariance operator $C$ and all results are determined by the integral operator $T$ in contrast to ours which depends on the behavior of $T^{1/2}CT^{1/2}$ and its interaction with $T$.} In the context of linear inverse problems,~\cite{blanchard2018optimal} recently used operator-theoretic analysis to also provide estimation error bounds. However, their setting is not directly comparable to our setting of functional linear and single-index model regression.


\subsection{Organization} The rest of the paper is organized as follows. In Section~\ref{sec:method}, we elaborate our unified methodology highlighting the viewpoint obtained by the infinite-dimensional Gaussian Stein's identity. In Section~\ref{sec:theorems}, we present our unified theoretical results for estimating the index parameter. \textcolor{black}{In Section~\ref{sec:interpretation}, we provide examples to interpret and illustrate the assumptions required to derive the main results}. In Section~\ref{sec:predconseq}, the consequences of our results for prediction in the linear setting are highlighted. In Section~\ref{sec:experiments}, numerical simulations for both the Gaussian and non-Gaussian settings are provided. The proofs of all the results are provided in 
Section~\ref{sec:mainproof} and the auxiliary results are provided in Sections~\ref{sec:auxappendix} and~\ref{subsec:etabound}. 

\subsection{Notations}\label{sec:notation}
Unless mentioned explicitly, $\langle \cdot, \cdot \rangle$ and $\|\cdot \|$ refer to $\langle \cdot, \cdot \rangle_{\Ltwo}$ and $\| \cdot \|_{\Ltwo}$,  respectively. We also require the RKHS inner-product and the associated norm sometimes, which we refer to by $\langle \cdot, \cdot \rangle_\calH$ and $\|\cdot \|_\calH$ respectively. For an operator $A$, we denote by $\range(A)$ and $\nulll(A)$, its range space and the null space respectively. We denote the operator norm of $A$ by $\| A\|$. \textcolor{black}{For $x\in H$, $x\otimes x:H \rightarrow H$ is defined as $(x \otimes x)z = x\langle x,z\rangle_H$ for any $z \in H$, where $H$ is a Hilbert space.} We also use $a \lesssim b$ to represent that $a \leq K b$ for a large enough constant $K$. Furthermore, for a random variable $\chi \in \mathbb{R}$ with distribution $P$ and a constant $x$, we use $\chi \lesssim_p x$ to denote the fact that for any $\delta >0$, there exists a positive constant $K_\delta < \infty $ such that $P(\chi \leq K_\delta x) \geq \delta$. 

\section{Methodology}\label{sec:method}
Let $\calH$ be an RKHS with the associated kernel $k: S \times S \to \textcolor{black}{\mathbb{R}}$. Define $\id: \calH \to \Ltwo$, $f\mapsto f$, to be the inclusion operator mapping functions in the RKHS $\calH$ to $\Ltwo$ and $\id^*: \Ltwo  \to  \calH$ to denote the adjoint of $\id$. We also define the following two important operators that arise in our analysis:
\begin{align}\label{eq:importantoperators}
 T \coloneqq \id \id^*: \Ltwo \to \Ltwo \quad \text{and} \quad  C \coloneqq \E[X \otimes X]: \Ltwo \to \Ltwo,
 \end{align}
where $\otimes$ represents the $\Ltwo$ tensor product. \textcolor{black}{Throughout the paper, we assume that $X$ is a centered random element, i.e., $\E[X] = 0_{L_2(S)}$.}

Our goal is to estimate $\beta^*$ in the presence of an unknown link function $g$. First note that one may view the  Gaussian processes $X$ as 
random elements taking values in Hilbert space following a Gaussian measure \citep{rajput1972gaussian,rajput1972gaussianb, grenander2008probabilities,hsin:15}. As discussed in Section~\ref{sec:contributions}, leveraging the version of Stein's identity for Hilbert-valued random elements yields \eqref{Eq:Gauss-stein}, i.e., $\E \left[Y X \right] = \modelconstant C\beta^*$. We refer to \cite{shih2011stein} and~\cite{kuo2011integration} for the details of the infinite-dimensional Gaussian Stein's identity (see \eqref{Eq:stein}) and the associated integration by parts formula and provide a  formal statement in the supplementary material for completeness. Here $\modelconstant$ is a constant depending on the link function $g$ and the index $\beta^*$. The exact value of the constant could be calculated for a given fixed link function $g$, which also fixes the true index parameter $\beta^*$. However, the exact value is irrelevant for our purpose as we focus on estimating the direction of the index parameter (\textcolor{black}{which is the best one could hope for without the knowledge of $g$, due to the lack of identifiability in the model}). Hence, we assume throughout that $g$ is such that $\modelconstant \neq 0$, \textcolor{black}{where $\modelconstant:=\mathbb{E}[g'(\langle X,\beta^*\rangle)]$ is recalled from Section~\ref{sec:contributions}}.  In particular, when $g$ is the identity function, it is easy to see that $\modelconstant=1$. We define $\tbeta\coloneqq \modelconstant \beta^*$ to handle the single-index and linear model in a unified manner and note that 
\begin{align*}
\tbeta \coloneqq \arg\min_{\beta \in \Ltwo} \E \left[ Y - \langle X,\beta\rangle\right]^2.
\end{align*}
This variational formulation is the key step in constructing a regularized estimator as in \eqref{Eq:estim}, whose details are provided below.

Let $(X_1, Y_1), \ldots (X_n, Y_n)$ be $n$ i.i.d. copies of random variables $(X,Y)$. \textcolor{black}{Recalling the definitions in and above~\eqref{eq:importantoperators},} for some $\lambda >0$, our estimator is based on minimizing the penalized least-squares criterion over the RKHS $\calH$,
\begin{align}
\hat{\beta}_{n,\lambda} ={} &  \underset{\beta \in \calH}{\arg\min}~~ \dfrac{1}{n}\sum_{i=1}^n \left[Y_i - \langle\beta,X_i \rangle \right]^2  + \lambda \| \beta \|_\calH^2\nonumber\\
={} &  \underset{\beta \in \calH}{\arg\min}~~ \frac{1}{n}\sum_{i=1}^n \left[Y_i - \langle\id \beta,X_i \rangle \right]^2  + \lambda \| \beta \|_\calH^2\nonumber\\
={} &  \underset{\beta \in \calH}{\arg\min}~~ \frac{1}{n}\sum_{i=1}^n \left[Y_i - \langle\beta,\id^*X_i \rangle_\calH \right]^2  + \lambda \| \beta \|_\calH^2\label{Eq:representer}\\
={} & \underset{\beta \in \calH}{\arg\min}~~ \frac{1}{n}\sum_{i=1}^n \left[Y^2_i + \langle \beta, \left( \id^* X_i\otimes \id^* X_i \right)\beta \rangle_\calH - 2 \langle Y_i \id^*X_i,\beta\rangle_\calH \right] + \lambda \| \beta \|_\calH^2.\label{Eq:squares}
\end{align}
This estimator does not require any knowledge of the link function $g$. Indeed, for the case where $g$ is the identity, this estimator was studied  in~\cite{yuan2010reproducing} for estimation in the functional linear model. As we will demonstrate,   this same estimator continues to be applicable for a general single-index model, under the assumption that $X$ is a Gaussian process, where we focus on the estimation of the direction of the true index parameter $\beta^*$.  

\textcolor{black}{By completing the squares w.r.t.~$\beta$ in \eqref{Eq:squares}, it is easy to verify that} 
\begin{align*}
\hat{\beta}_{n,\lambda} = \left[ \id^* \left(\frac{1}{n}\sum_{i=1}^n X_i\otimes X_i \right) \id + \lambda I \right]^{-1} \id^*\left[\frac{1}{n} \sum_{i=1}^n Y_iX_i \right].
\end{align*}
Defining $\hat{C} \coloneqq \frac{1}{n}\sum_{i=1}^n X_i\otimes X_i$ and $\hat{R} \coloneqq \frac{1}{n}\sum_{i=1}^n Y_iX_i,$ we obtain 
\begin{align}
\hat{\beta}_{n,\lambda} = \left[ \id^* \hat{C} \id + \lambda I \right]^{-1} \id^* \hat{R}. \label{Eq:finite-dim}
\end{align}
In what follows, we denote $\hat{\beta}_{n,\lambda}$ by $\hat{\beta}$ for simplicity. We emphasize that the above form of the estimator is useful for our analysis, while an alternate form that is more useful for implementation purposes is as follows. By applying the representer theorem \citep{Kimeldorf-71,Scholkopf-01} to \eqref{Eq:representer}, $\hat{\beta}\in \text{span}\left\{\int_S k(\cdot,t)X_i(t)\,dt:i=1,\ldots,n\right\},$ i.e., there exists a $\bm{\alpha}:=(\alpha_1,\ldots,\alpha_n)^\top\in\mathbb{R}^n$ such that $\hat{\beta}=\sum^n_{i=1}\alpha_i\int_S k(\cdot,t)X_i(t)\,dt$. Using this in \eqref{Eq:representer} and solving for $\bm{\alpha}$ yields $\bm{\alpha}=(\bm{K}+n\lambda I)^{-1}\bm{y},$ where $\bm{K}\in\mathbb{R}^{n\times n}~\text{with} ~[\bm{K}]_{ij}:=\int_S\int_S k(t,s)X_i(t)X_j(t)\,dt\,ds$ and $\bm{y}=(Y_1,\ldots,Y_n)^\top\in\mathbb{R}^n$. Therefore, $\hat{\beta}$ can be computed by solving a finite-dimensional linear system of size $n$, which is not obvious from the expression in \eqref{Eq:finite-dim}. 

\section{Main results}\label{sec:theorems}
Here we present our main results concerning the rate of convergence of $\hat{\beta}$ to $\tilde{\beta}^*$ in $L^2(S)$. Theorem~\ref{thm:masterthm} (proved in Section~\ref{subsec:master}) is a general result about the behavior of $\Vert \hat{\beta}-\tilde{\beta}^*\Vert$ in terms of certain key operators involving $T$ and $C$. More specialized results are presented in Theorems~\ref{thm:commutativeassumption}, \ref{thm:tcteigendecay} and \ref{thm:alignedeigensystem}, depending on whether $T$ and $C$ commute or not.

\begin{theorem}[Master theorem for estimation]\label{thm:masterthm}
Let $\| T^{-\alpha} \tbeta\| < \infty$, i.e., $\tbeta \in \range(T^\alpha)$ for $\alpha \in (0,1/2]$. Define 
\begin{align}\label{eq:varkappa}
\varkappa \coloneqq \E \left[\left(g(\langle X,\tbeta \rangle) - \langle X,\tbeta\rangle\right)^4\right].
\end{align}
Suppose one of the following conditions hold: (a) $\emph{\trace}(C^{1/2}) < \infty$ and $\varkappa \in (0,\infty)$, (b) $\varkappa=0$ and $\emph{\trace}(C) <\infty$. Define
\begin{align}
\Uptheta &\coloneqq T^\alpha  (CT+\lambda I)^{-1} C (TC+\lambda I)^{-1} T^\alpha,\qquad d(\lambda)  \coloneqq \frac{\emph{\trace}(\Uptheta)}{\|\Uptheta\|}, \label{eq:condition1}\\
\Xi&\coloneqq T (T^{1/2} C T^{1/2} + \lambda I)^{-2} T  ,~~\text{and} \notag\\
N(\lambda)&\coloneqq \emph{\trace}\left[(T^{1/2}CT^{1/2} +\lambda I)^{-1} T^{1/2} C T^{1/2} \right]. \label{eq:condition2}
\end{align}
Then, for
\begin{align}\label{eq:condition3}
\quad\quad\delta \in (0,1/e],\qquad&\qquad n \gtrsim (d(\lambda) \vee \log(1/\delta)), \quad\text{and}\notag \\ 
\frac{\emph{\trace}(T^{1/2}CT^{1/2})}{n}& \lesssim \lambda \lesssim \| T^{1/2} C T^{1/2} \|,
\end{align}
with probability at least $1-3\delta$, we have
\begin{align}\label{eq:masterbound}
\| \hat{\beta} - \tbeta\| &\lesssim  \bias(\lambda)+\| \Xi \|^{\frac{1}{4}}  \sqrt{\frac{(\sigma^2+\sqrt{\varkappa}) N(\lambda)}{n\delta}} \\
&+\lambda\| \Xi \|^{\frac{1}{4}} \left(\left\Vert T^{1/2}CT^{1/2}\right\Vert^{1/2}+\sqrt{\lambda}\right)\| T\|^{\frac{1}{2}-\alpha}\| T^{-\alpha} \tbeta\|\sqrt{\frac{\|\Uptheta\| \emph{\trace}(\Uptheta)}{n}}, \nonumber
\end{align}
where $\bias(\lambda) \coloneqq \| T(CT+\lambda I)^{-1} C\tbeta -\tbeta\|$.
\end{theorem}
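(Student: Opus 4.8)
The plan is to decompose the error $\hat\beta - \tbeta$ into a bias term and a stochastic term by inserting the population quantities, and then control each piece using concentration inequalities for the empirical operators $\hat C$ and $\hat R$ in the appropriate (operator-weighted) norms. Recall from \eqref{Eq:finite-dim} that $\hat\beta = (\id^*\hat C\id + \lambda I)^{-1}\id^*\hat R$. The key algebraic observation is that $\id^*\hat C\id$ and $\id^*\hat R$ live in $\calH$, and one should pass to $\Ltwo$ via the identities $\id\id^* = T$ and (on the population side) $\E[\id^*\hat R] = \id^* C\tbeta$, using Stein's identity $\E[YX] = C\tbeta$ derived in Section~\ref{sec:method}. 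Writing everything through $T^{1/2}$ and conjugating, the natural ``whitened'' operator is $\Lambda = T^{1/2}CT^{1/2}$ with its regularized version $\Lambda_\lambda = \Lambda + \lambda I$; the appearance of $\Xi = T\Lambda_\lambda^{-2}T$, $N(\lambda) = \trace(\Lambda_\lambda^{-1}\Lambda)$, and $d(\lambda), \Uptheta$ in the statement signals that the proof routes the bounds through $\Lambda_\lambda^{-1/2}$ acting on both the noise part $\hat R - \hat C\tbeta$ and the perturbation part $(\hat C - C)\tbeta$.

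First I would write
\begin{align*}
\hat\beta - \tbeta = \underbrace{\big[(\id^*\hat C\id+\lambda I)^{-1}\id^* C\tbeta - \tbeta\big]}_{\text{approximation}} + \underbrace{(\id^*\hat C\id+\lambda I)^{-1}\id^*(\hat R - \hat C\tbeta)}_{\text{fluctuation}},
\end{align*}
then further split the approximation term by replacing $\hat C$ with $C$ inside the resolvent, producing the deterministic bias $\bias(\lambda) = \|T(CT+\lambda I)^{-1}C\tbeta - \tbeta\|$ plus a term involving $(\hat C - C)$. The standard device here is a ``resolvent swap'': $(\id^*\hat C\id + \lambda I)^{-1} - (\id^* C\id + \lambda I)^{-1} = -(\id^*\hat C\id+\lambda I)^{-1}\id^*(\hat C - C)\id(\id^* C\id+\lambda I)^{-1}$. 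To make this quantitative I would invoke a self-normalized concentration bound of the form $\|\Lambda_\lambda^{-1/2}T^{1/2}(\hat C - C)T^{1/2}\Lambda_\lambda^{-1/2}\| \lesssim \sqrt{d(\lambda)/n} + d(\lambda)/n$ (a Bernstein inequality for sums of i.i.d.\ self-adjoint operators, e.g.\ Tropp-type), which under the sample-size condition $n \gtrsim d(\lambda)\vee\log(1/\delta)$ in \eqref{eq:condition3} is $\le 1/2$ with probability $\ge 1-\delta$; this is exactly what lets one replace $\hat C$-resolvents by $C$-resolvents at the cost of constant factors (the classical ``$\Lambda_\lambda^{-1/2}\hat\Lambda_\lambda\Lambda_\lambda^{-1/2}$ is comparable to identity'' argument). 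The factor $\|\Xi\|^{1/4}$ arises because $\id^*(\cdot) = \id^*\Lambda_\lambda^{1/2}\cdot\Lambda_\lambda^{-1/2}(\cdot)$ and $\|\id^*\Lambda_\lambda^{-1/2}\| = \|T^{1/2}\Lambda_\lambda^{-1/2}\|$, and one power of $T\Lambda_\lambda^{-2}T$ gets distributed — I'd track this carefully but expect it to be bookkeeping once the right factorization is fixed.

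For the fluctuation term, I would bound $\|(\id^*\hat C\id+\lambda I)^{-1}\id^*(\hat R-\hat C\tbeta)\|$ by first replacing the empirical resolvent by the population one (same swap as above), then writing $\|T\Lambda_\lambda^{-1}T^{1/2}\| \cdot \|\Lambda_\lambda^{-1/2}T^{1/2}(\hat R - \hat C\tbeta)\|$, and controlling the second factor by a vector Bernstein inequality. The summands are $\Lambda_\lambda^{-1/2}T^{1/2}\big((Y_i - \langle X_i,\tbeta\rangle)X_i\big)$, whose variance proxy is $\trace(\Lambda_\lambda^{-1}\Lambda) = N(\lambda)$ up to the moment factor $\sigma^2 + \sqrt\varkappa$ (here $Y_i - \langle X_i,\tbeta\rangle = (g(\langle X_i,\tbeta\rangle) - \langle X_i,\tbeta\rangle) + \epsilon_i$, and the two alternative hypotheses in (a)/(b) — $\trace(C^{1/2})<\infty$ with $\varkappa$ finite, versus $\varkappa = 0$ with $\trace(C)<\infty$ — are precisely what make the relevant fourth moment / trace quantities finite so Bernstein applies). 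The second line of \eqref{eq:masterbound}, carrying $\|T^{-\alpha}\tbeta\|$ and $\|\Uptheta\|\trace(\Uptheta)$, comes from the leftover $(\hat C - C)\tbeta$ terms generated by the two resolvent swaps: one writes $(\hat C - C)\tbeta = (\hat C - C)T^\alpha(T^{-\alpha}\tbeta)$ and measures $(\hat C - C)T^\alpha$ in the $\Lambda_\lambda$-weighted norm, whose variance is governed exactly by $\Uptheta = T^\alpha(CT+\lambda I)^{-1}C(TC+\lambda I)^{-1}T^\alpha$; the explicit prefactor $\lambda\|\Xi\|^{1/4}(\|\Lambda\|^{1/2}+\sqrt\lambda)\|T\|^{1/2-\alpha}$ tracks how this term is multiplied back through $(\id^*\hat C\id+\lambda I)^{-1}$ and how $\|T^\alpha\| = \|T\|^\alpha$ relates to the $\|T\|^{1/2}$-normalization used elsewhere.

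The main obstacle I anticipate is the \emph{non-commutativity} of $T$ and $C$: without commutativity one cannot simultaneously diagonalize, so every step that would be a scalar inequality in the commuting case becomes an operator-norm inequality requiring careful insertion of $\Lambda_\lambda^{\pm 1/2}$ and repeated use of $\|A^*A\| = \|AA^*\|$ and $\|T^{1/2}\Lambda_\lambda^{-1/2}\|\le 1$-type bounds. In particular, relating quantities defined via $CT+\lambda I$ (which acts on $\Ltwo$ but is not self-adjoint) to the self-adjoint $\Lambda_\lambda$ needs the identities $T(CT+\lambda I)^{-1} = T^{1/2}\Lambda_\lambda^{-1}T^{1/2}$ and $(CT+\lambda I)^{-1}C = T^{-1/2}\Lambda_\lambda^{-1}T^{1/2}C^{?}$-style rewrites, which must be justified on ranges rather than naively. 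A secondary technical point is the event on which the empirical resolvent is controlled: one must take a union bound over the two/three concentration events (for $\hat C - C$ weighted by $\Lambda_\lambda$, for the noise vector, and for $(\hat C - C)\tbeta$), which is why the conclusion holds with probability $1-3\delta$; assembling these while keeping the $\delta$-dependence only inside the $1/\sqrt{n\delta}$ factor of the dominant stochastic term (as opposed to the sub-dominant terms, which get $\log(1/\delta)$ through the sample-size condition) requires choosing the right tail inequality — a Chebyshev/second-moment bound for the leading noise term rather than a Bernstein bound — and I'd be careful to state exactly which inequality yields which $\delta$-rate.
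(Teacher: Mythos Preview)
Your approach is essentially the paper's: decompose into bias plus stochastic pieces, swap empirical resolvents for population ones via Gaussian operator concentration (Koltchinskii--Lounici), and use a second-moment/Chebyshev bound for the noise vector. You correctly anticipate the three concentration events and the reason for $1-3\delta$, and you are right that Chebyshev (not Bernstein) is what produces the $1/\sqrt{n\delta}$ factor in the leading stochastic term.

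Two points where your write-up is imprecise and would cause trouble if followed literally. First, your displayed decomposition is algebraically off: $(\id^*\hat C\id+\lambda I)^{-1}\id^* C\tbeta + (\id^*\hat C\id+\lambda I)^{-1}\id^*(\hat R-\hat C\tbeta)$ does not equal $\hat\beta$; a term $(\id^*\hat C\id+\lambda I)^{-1}\id^*(\hat C-C)\tbeta$ is missing. The paper avoids this by inserting the population minimizer $\beta_\lambda=(A+\lambda I)^{-1}\id^*C\tbeta$ (with $A=\id^*C\id$) and deriving the exact identity
\[
\hat\beta-\beta_\lambda=(\hat A+\lambda I)^{-1}\bigl[\id^*(\hat R-\hat C\tbeta)+\id^*(C-\hat C)(\id\beta_\lambda-\tbeta)\bigr],
\]
which is cleaner than stacking two resolvent swaps.

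Second, and more substantively, your account of the $\Uptheta$ term (``leftover $(\hat C-C)\tbeta$ terms'') does not explain where the prefactor $\lambda$ and the \emph{two-sided} resolvent structure $T^\alpha(CT+\lambda I)^{-1}C(TC+\lambda I)^{-1}T^\alpha$ come from. The paper's mechanism is the identity
\[
\id\beta_\lambda-\tbeta=T(CT+\lambda I)^{-1}C\tbeta-\tbeta=-\lambda(TC+\lambda I)^{-1}\tbeta,
\]
so that the perturbation piece is $(C-\hat C)(\id\beta_\lambda-\tbeta)=-\lambda(C-\hat C)(TC+\lambda I)^{-1}\tbeta$: one $(CT+\lambda I)^{-1}$ comes from the outer resolvent $(A+\lambda I)^{-1}\id^*=\id^*(CT+\lambda I)^{-1}$, the other from this bias identity, and the explicit $\lambda$ is produced in the same step. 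If you carry out your two resolvent swaps carefully you will find they collapse to exactly this expression, but as written your proposal would lead you to bound $T^\alpha(CT+\lambda I)^{-1}(C-\hat C)T^\alpha$ instead, which is the wrong operator (only one resolvent, no $\lambda$), and you would not recover the stated second line of the bound.
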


\begin{remark}
(i) The assumption $\tbeta \in \range(T^\alpha)$ imposes certain smoothness condition on $\tilde{\beta}^*$. For example, it is well-known \cite[Theorem 4.51]{SteChr2008} that $\tbeta\in\calH$ when $\alpha=\frac{1}{2}$, which we refer to as the \emph{well-specified setting}. This assumption is equivalent to the condition that $\tbeta$ lies in an interpolation space between $L^2(S)$ and $\calH$ with $\alpha$ being the interpolating index.\vspace{1.5mm}\\
(ii) While $\emph{\trace}(C)<\infty$ is guaranteed by the well-definedness of the Gaussian process, Theorem~\ref{thm:masterthm} requires a slightly stronger condition, namely  $\emph{\trace}(C^{1/2}) < \infty$, when $\varkappa\ne 0$.\vspace{1.5mm}\\
(iii) The parameter $\varkappa$ captures the degree of non-linearity of the model. Indeed, $\varkappa=0$ implies $g(\langle X,\tbeta \rangle) =\langle X,\tbeta \rangle$ with probability 1. Conversely, when the model is linear, $\varkappa=0$. \textcolor{black}{For the non-linear case, the condition of $\varkappa < \infty$ is rather mild since it is satisfied by any $g$ that satisfies  $g(x)=o(e^{x^{2+\epsilon}})$ as $x\rightarrow\infty$ for any $\epsilon>0$. Since $\langle X,\tbeta \rangle$ is a zero mean Gaussian random variable, clearly, $\varkappa < \infty$ if $\mathbb{E}[g^4(Z)]<\infty$ which is true if the above condition holds.}

\end{remark}
The following result (proved in Section~\ref{subsec:commutative1}) provides a concrete convergence rate when the operators $T$ and $C$ commute.
\begin{theorem}[Commutative operators]\label{thm:commutativeassumption}
Let $\| T^{-\alpha} \tbeta\| < \infty$ for $\alpha \in (0,1/2]$.  Assume that  the operators $T$ and $C$ commute and have simple eigenvalues (i.e., of multiplicity one) denoted by $\mu_i$ and $\xi_i$ for $i \in \mathbb{N}$, such that
\begin{align}\label{Eq:eigdecay-t-c}
i^{-t} \lesssim \mu_i \lesssim i^{-t}~~\text{and}~~i^{-c} \lesssim \xi_i \lesssim i^{-c},
\end{align}
where $t>1$ and $c>1.$  Suppose one of the following conditions hold: (a) $\varkappa \in (0,\infty)$ and $c >2$, (b) $\varkappa =0$ and $c >1$. Then 
\begin{align}\label{eq:thm2claim}
\| \hat{\beta} - \tbeta\| &\lesssim_p n^{-~\tfrac{\alpha t}{1+c+2t(1-\alpha)}}\quad \text{for} \quad \lambda = n^{-~\tfrac{t+c}{1+c+2t(1-\alpha)}}.
\end{align}
\end{theorem}

\begin{remark}
(i) When $\alpha =1/2$, i.e., $\tbeta \in \calH$ (well-specified case),  we obtain
\begin{align*}
\| \hat{\beta} - \tbeta\| &\lesssim_p n^{-~\tfrac{ t}{2(1+t+c)}},
\end{align*}
which exactly matches the minimax optimal rate obtained in~\cite{yuan2010reproducing} for the functional linear model. Remarkably, this same rate applies in the much more general framework of a single index functional regression
 model when $c>2$ and 
$\varkappa < \infty$.\vspace{1.5mm}\\
(ii) Even for the special case of the functional linear model, Theorem~\ref{thm:commutativeassumption} extends the results of~\cite{yuan2010reproducing} to the misspecified setting, i.e., $\tbeta\in L^2(S)\backslash \calH$, since 
\cite{yuan2010reproducing} only investigated the well-specified setting (i.e., $\tbeta\in\calH$).\vspace{1.5mm}\\
(iii) The term $\alpha$ controls the smoothness of $\tilde{\beta}^*$ with large values of $\alpha$ corresponding to smooth $\tilde{\beta}^*$. Therefore, we should expect the convergence rates to get faster with increasing $\alpha$, which is confirmed by Theorem~\ref{thm:commutativeassumption}. However, based on our current proof technique, Theorem~\ref{thm:commutativeassumption} handles the range of smoothness corresponding to $\alpha \in (0,1/2]$. The case of $\alpha>1/2$ remains open \textcolor{black}{and is an  artifact of our proof technique.} 
\vspace{1.5mm}\\
(iv) The requirement $c>2$ ensures that $\emph{\trace}(C^{1/2}) <\infty$.

\end{remark}
In the following, we relax the assumption of commutativity of $C$ and $T$ and investigate the convergence rates for $\Vert \hat{\beta}-\tbeta\Vert$ by directly exploiting the eigenvalue decay of $T^{1/2} C T^{1/2}$  in Theorem~\ref{thm:tcteigendecay} (proved in Section~\ref{subsec:noncommutative1}). Under additional assumptions about the alignment between the eigenfunctions of $T^{1/2} C T^{1/2}$ and $T$  
faster convergence rates can be obtained. This is the result stated in Theorem~\ref{thm:alignedeigensystem} (proved in Section~\ref{subsec:eigalign}) and the rates there are seen to be faster than those in  Theorem~\ref{thm:tcteigendecay}, while both these convergence rates are slower than those obtained in Theorem~\ref{thm:commutativeassumption} because the commutativity assumption is stronger than these relaxed assumptions.

\begin{theorem}[Noncommutative operators]\label{thm:tcteigendecay}
Let $(\zeta_i)_{i\in\mathbb{N}}$ denote the eigenvalues of $T^{1/2} C T^{1/2}$ with $ i^{-b} \lesssim \zeta_i \lesssim i^{-b}$, for some $b >1$. Suppose $\tbeta \in \range(T^{1/2} (T^{1/2} C T^{1/2})^\nu)$ for $\nu \in (0,1]$ and $\varkappa < \infty$. Then, for
\begin{align}\label{eq:thm3claim}
\| \hat{\beta} - \tbeta\| &\lesssim_p n^{-~\tfrac{ b\nu}{1+b + 2b\nu}} \quad\text{for}\quad \lambda = n^{-~\tfrac{b}{1+b +2b\nu}}.
\end{align}
\end{theorem}

\begin{remark}
(i) Unlike in the commutative case, the results are presented in terms of the eigen decay behavior of $T^{1/2}CT^{1/2}$. When $T$ and $C$ commute, we obtain $b=t+c$.\vspace{1.5mm}\\
(ii) To the best of our knowledge, to date there is no result available in the literature for the estimation error  $\|\hat{\beta}-\tbeta\|$  in the noncommutative setting, even for the special case of functional linear models.\vspace{1.5mm}\\
(iii) The assumption $\tbeta \in \range(T^{1/2} (T^{1/2} C T^{1/2})^\nu)$ implies there is a function  $h \in \Ltwo$ such that $$ T^{1/2} (T^{1/2} C T^{1/2})^\nu h = \tbeta,$$ whence $\tbeta \in \range(T^{1/2}) = \calH$ (i.e., $\alpha =1/2$ in Theorems~\ref{thm:masterthm} and \ref{thm:commutativeassumption}). Therefore, the assumption $\tbeta \in \range(T^{1/2} (T^{1/2} C T^{1/2})^\nu)$ is stronger than assuming $\tbeta\in \range(T^{1/2})$. The key reason for this assumption is to obtain sharper bounds of  $\bias(\lambda)$, thus obtaining non-trivial convergence rates. Indeed, simply assuming $\tbeta\in\range(T^{1/2})$ ensures $\bias(\lambda)\rightarrow 0$ as $\lambda\rightarrow 0$, and consistency of $\hat{\beta}$ can be established, but with no handle on the convergence rate.\vspace{1.5mm}\\
(iv) While it is difficult to grasp the smoothness properties of $\tbeta$ entailed by the condition $\tbeta \in \range(T^{1/2} (T^{1/2} C T^{1/2})^\nu)$ in the noncommutative setting, an understanding of this condition can be gained for the special case where  $T$ and $C$ do commute. In this setting, 
when the eigenvalues of $T$ and $C$ satisfy the conditions of Theorem~\ref{thm:commutativeassumption}, the assumption $\tbeta \in \range(T^{1/2} (T^{1/2} C T^{1/2})^\nu)$ is equivalent to $\tbeta\in\range(T^{\frac{1}{2}+\nu+\frac{c\nu}{t}})\subset \range(T^{1/2})$, which implies that $\tbeta$ is restricted to a smaller subspace of $\calH$. The larger the values of $\nu$ or $\frac{c}{t}$ are, the smaller is this subspace of $\calH$. This means that $\tbeta$ is smoother when $\nu$ increases and when $\nu>0$ as compared to $\nu=0$ (where only $\nu=0$ is actually needed in the commutative case).\vspace{1.5mm}\\
(v) Denoting the eigenfunctions of $T^{1/2}CT^{1/2}$ by $(\phi_i)_{i\in\mathbb{N}}$, in the commutative case, the assumption that $\tbeta \in \range (T^{1/2} (T^{1/2} C T^{1/2})^\nu)$ implies that the bias term behaves as
\begin{align*}
\bias(\lambda)&= \| T(CT+\lambda I)^{-1} C\tbeta -\tbeta\|\nonumber\\
&=\| T(CT+\lambda I)^{-1} CT^{1/2} (T^{1/2} C T^{1/2})^\nu h -T^{1/2} (T^{1/2} C T^{1/2})^\nu h\| \\
  &\leq \left[ \sum_i \left[ \frac{i^{-t -t/2-c-\nu(t+c)}}{i^{-(t+c)} + \lambda } - i^{-t/2 -\nu (t+c)} \right]^2 \langle h,\phi_i \rangle^2 \right]^{1/2} \\
   &\leq  \lambda\left[ \sup_i \frac{i^{-t/2 -\nu(t+c)}}{i^{-(t+c)} + \lambda}\right] \| h\| \leq~\lambda \left[ \lambda ^{\frac{t/2 +\nu(t+c) - (t+c)}{t+c}}\right] =~\lambda^{\nu + \tfrac{t}{2(t+c)}},
\end{align*}
where the last inequality follows from Lemma A.6 
when $(t+c)(1-\nu)\ge t/2$, i.e., $\nu\le \frac{t+2c}{2t+2c}$.
Note that this upper bound is better than $\lambda^{\frac{t}{2(t+c)}}$ when $\alpha =1/2$. Hence, we obtain 
\begin{align*}
\| \hat\beta - \tbeta \|\lesssim_p \frac{\lambda^{-\frac{1+c}{2(t+c)}}}{\sqrt{n}} + \lambda^{\nu + \frac{t}{2(t+c)}},
\end{align*}
where the first term is directly taken from the proof of Theorem~\ref{thm:commutativeassumption} under $\alpha=1/2$. Therefore, 
\begin{align*}
\| \hat\beta - \tbeta \| \lesssim_p n^{-\tfrac{\nu(t+c) + \frac{t}{2}}{2\nu(t+c) +1+c+t}} \quad \text{for} \quad \lambda = n^{-\tfrac{t+c}{2\nu(t+c) +1+t+c}}.
\end{align*}
On the other hand, the bound in Theorem~\ref{thm:tcteigendecay} under the commutativity assumption, i.e., $b=t+c$ yields
\begin{align*}
\| \hat\beta - \tbeta \| \lesssim_p n^{-\tfrac{\nu(t+c)}{2\nu(t+c) +1+c+t}}\quad\text{for}\quad \lambda = n^{-\tfrac{t+c}{2\nu(t+c) +1+t+c}}.
\end{align*}
Thus the bound in Theorem~\ref{thm:commutativeassumption} is better than the one in Theorem~\ref{thm:tcteigendecay}, as expected. 

\end{remark}
As can be seen from the proof of Theorem~\ref{thm:tcteigendecay}, the terms $\|\Xi \|$ and $\bias(\lambda)$ with the assumption 
$\tbeta \in \range(T^{1/2} (T^{1/2} C T^{1/2})^\nu)$ involve interaction terms between $T$ and $T^{1/2} C T^{1/2}$. In contrast, the terms $N(\lambda)$, $\Vert \Theta\Vert$ and $\trace(\Theta)$ are entirely determined by $T^{1/2} C T^{1/2}$. Theorem~\ref{thm:tcteigendecay} ignores the interaction between $T$ and $T^{1/2} C T^{1/2}$. It is of interest to investigate if more refined bounds than those in Theorem~\ref{thm:tcteigendecay} can be obtained by additionally capturing interaction terms.  To this end, let 
$(\zeta_i, \phi_i)$ and $(\mu_i, \psi_i)$ for $i \in \mathbb{N}$ denote the eigensystems of $T^{1/2} C T^{1/2}$ and $T$,  respectively. Then we have
\begin{align*}
\Xi = T(T^{1/2} C T^{1/2}+\lambda I)^{-2}T=T \left[ \sum_i (\zeta_i + \lambda)^{-2} \phi_i\otimes \phi_i + \sum_i \lambda^{-2} \tilde{\phi}_i \otimes \tilde{\phi}_i  \right] T,
\end{align*}
where the $(\tilde{\phi}_i)_i$ span the null space $\nulll(T^{1/2} C T^{1/2})$ of $T^{1/2} C T^{1/2}$. Therefore, 
\begin{align*}
\Vert\Xi\Vert &= \left\Vert\sum_i (\zeta_i + \lambda)^{-2} T\phi_i \otimes T\phi_i + \sum_i  \frac{1}{\lambda^{2}} T\tilde{\phi}_i \otimes T\tilde{\phi}_i\right\Vert \\
&\le \sum_i \frac{\left\Vert T\phi_i\otimes T\phi_i\right\Vert}{(\zeta_i + \lambda)^2}+\frac{1}{\lambda^{2}}\left\Vert \sum_i T\tilde{\phi}_i\otimes T\tilde{\phi}_i\right\Vert
= \sum_i \frac{\left\Vert T\phi_i\right\Vert^2}{(\zeta_i + \lambda)^2}+\frac{1}{\lambda^{2}}\left\Vert \sum_i T\tilde{\phi}_i\otimes T\tilde{\phi}_i\right\Vert \\
&= \sum_i \frac{\left\Vert \sum_j\mu_j\langle \phi_i,\psi_j\rangle\psi_j\right\Vert^2}{(\zeta_i + \lambda)^{2}}+\frac{1}{\lambda^{2}}\left\Vert \sum_i T\tilde{\phi}_i\otimes T\tilde{\phi}_i\right\Vert.
\end{align*}
Note that the first term in the above inequality can be further bounded as follows, 
\begin{align*}
\sum_i \frac{\left\Vert \sum_j\mu_j\langle \phi_i,\psi_j\rangle\psi_j\right\Vert^2}{(\zeta_i + \lambda)^{2}}&=\sum_i\frac{\mu^2_i}{(\zeta_i+\lambda)^2}\sum_j\frac{\mu^2_j}{\mu^2_i}\langle \phi_i,\psi_j\rangle^2 \\ &\le \sum_i\frac{\mu^2_i}{(\zeta_i+\lambda)^2}\sup_i \frac{1}{\mu^2_i}\sum_j\mu^2_j\langle \phi_i,\psi_j\rangle^2.
\end{align*}
Under the assumption that $\sup_i \frac{1}{\mu^2_i}\sum_j\mu^2_j\langle \phi_i,\psi_j\rangle^2<\infty$ (this condition captures the interaction between $T$ and $T^{1/2}CT^{1/2}$ and is naturally satisfied when $T$ and $C$ commute), we obtain
$$\sum_i \frac{\left\Vert \sum_j\mu_j\langle \phi_i,\psi_j\rangle\psi_j\right\Vert^2}{(\zeta_i + \lambda)^{2}}\lesssim \sum_i\frac{\mu^2_i}{(\zeta_i+\lambda)^2}\lesssim \sum_i \frac{i^{-2t}}{(i^{-b}+\lambda)^2}\lesssim \lambda^{-\frac{1+2b-2t}{b}},$$
where the last inequality follows from Lemma A.5  
when $b\ge 2t$ and $b\ge t$, i.e., $b\ge 2t$. Therefore,
$$\Vert \Xi\Vert\lesssim \lambda^{-\frac{1+2b-2t}{b}}+\lambda^{-2}\lesssim \lambda^{-2}$$
since the first term is of smaller order than $\lambda^{-2}$ as $\lambda\rightarrow 0$. 

This shows that because of the interaction between $T$ and  $\nulll(T^{1/2}CT^{1/2})$, it appears that a better bound is not possible for $\Vert \Xi\Vert^{1/4}$, as we showed in the proof of Theorem~\ref{thm:tcteigendecay} (see~\eqref{eq:temp11}) that $\Vert \Xi\Vert^{1/4}\le \lambda^{-1/2}$ without capturing any interaction between $T$ and $\nulll(T^{1/2}CT^{1/2})$. On the other hand, the bound on $\bias(\lambda)$ seems to be improvable. Indeed, note that as $\tbeta = T^{1/2} (T^{1/2} C T^{1/2})^\nu h$ for some $h\in L^2(S)$, we obtain $\bias(\lambda) = \| T^{1/2} (\Lambda + \lambda I)^{-1} \Lambda^{1+\nu} h -  T^{1/2} \Lambda^\nu h \|$ with $\Lambda:=T^{1/2} CT^{1/2}$, 
where the interaction between $T$ and $\nulll(T^{1/2}CT^{1/2})$ does not play a role. These observations lead to the following result (proved in Section~\ref{subsec:eigalign}), which is an improvement over Theorem~\ref{thm:tcteigendecay}. 
\begin{theorem}[Noncommutative operators with alignment of eigenfunctions]\label{thm:alignedeigensystem}
Let $(\zeta_i, \phi_i)$ and $(\mu_i, \psi_i)$ for $i \in \mathbb{N}$, denote the eigensystems of $T^{1/2} C T^{1/2}$ and $T$ respectively. Suppose 
\begin{align*}
   i^{-b} \lesssim \zeta_i \lesssim i^{-b}~\text{and}~ i^{-t} \lesssim \mu_i \lesssim i^{-t}
\end{align*}
for some $b,t >1$ and that the eigenfunctions of $T^{1/2} C T^{1/2}$ and $T$ satisfy
\begin{align}\label{eq:boundedDcondition}
\sup_{i,l} \frac{1}{\mu_i\mu_l} \left| \sum_j \mu_j \langle \phi_i, \psi_j \rangle  \langle \phi_l, \psi_j \rangle\right|^2 < \infty.
\end{align}
Assuming $\varkappa < \infty$ and $\tbeta \in \range(T^{1/2} (T^{1/2} C T^{1/2})^\nu)$ for some $\nu \in \left(0,\frac{1}{2}-\frac{t}{2b}\right]$, we have
\begin{align*}
\| \hat{\beta} - \tbeta\| &\lesssim_p n^{-\tfrac{ b\nu + 
(t-1)/2}{t+b + 2b\nu}}\quad\text{for}\quad \lambda = n^{-\tfrac{b}{t+b +2b\nu}}.
\end{align*}
\end{theorem}

\begin{remark} (i) For $\nu \in (0,\frac{1}{2}-\frac{t}{2b}]$, the rate in Theorem~\ref{thm:alignedeigensystem} is clearly faster than that in Theorem~\ref{thm:tcteigendecay}. \vspace{1.5mm}\\
(ii) When $T$ and $C$ commute, the condition in \eqref{eq:boundedDcondition} is satisfied as
\begin{align*}
\sup_{i,l} \frac{1}{\mu_i\mu_l} \left| \sum_j \mu_j \langle \phi_i, \psi_j \rangle  \langle \phi_l, \psi_j \rangle\right|^2 &= \sup_{i,l} \frac{1}{\mu_i\mu_l} \left| \sum_j \mu_j \langle \phi_i, \phi_j \rangle  \langle \phi_l, \phi_j \rangle\right|^2 \\ &= \sup_{i} \frac{1}{\mu_i^2} \left| \sum_j \mu_j \langle \phi_i, \phi_j \rangle^2 \right|^2 =1.
\end{align*}
Since $b=t+c$ in the commutative setting, by setting $\lambda = n^{-~\tfrac{t+c}{2t+c +2(t+c)\nu}}$, we obtain
\begin{align*}
\| \hat{\beta} - \tbeta\| &\lesssim_p n^{-~\tfrac{ (t+c)\nu + 
(t-1)/2}{2t+c + 2(t+c)\nu}}.
\end{align*}
This rate is still slower than the rate provided by  Theorem~\ref{thm:commutativeassumption}, which is obtained directly under the commutativity assumption since the interaction between $T^{1/2}CT^{1/2}$ and  $T$ is not captured in $\Xi$.
\end{remark}



\section{Interpreting range space conditions on $\tilde{\beta}^*$}\label{sec:interpretation}
In this section, we provide an interpretation of the range space condition $\tilde{\beta}^* \in \mathscr{R}(T^{1/2}(T^{1/2}CT^{1/2})^\nu)$, for $\nu \in (0,1]$, for specific choices of covariance operator $C$ and the kernel $k$ that induces the integral operator $T$. The following result (proved in Section~\ref{subsec:range}) provides a generic characterization of the range space condition, which is elaborated through examples. We consider the case $\nu=1$ for simplicity. 

\begin{proposition}\label{prop:char}
For $x,y \in [0,1]$, suppose that the reproducing kernel $k$ and the covariance function $c$ are given respectively by 
\begin{align*}
k(x,y) = \sum_{i\geq 1} a_i \phi_i(x) \phi_i(y), \quad
c(x,y) = \sum_{m \geq 1} b_m \psi_m(x) \psi_m(x),
\end{align*}
where $a_i\geq 0$ for all $i$, $b_m\geq 0$ for all $m$, $\sum_{i\geq 1} a_i \leq \infty$, $\sum_{m\geq 1} b_m  \leq \infty$ and $(\phi_i)_i$ and $(\psi_m)_m$ form an orthonormal basis of $L^2([0,1])$. Define $\tau_j\coloneqq\sum_i a_i\eta_{ij}^2$ where $\eta_{ij}\coloneqq \sum_{m\geq 1} b_m \theta_{mi} \theta_{mj}$ and $\theta_{mj}\coloneqq\langle \psi_m,\phi_i \rangle$, and assume $\sup_j\tau_j < \infty $. Then it holds that 
\begin{enumerate}
    \item[(i)] The RKHS induced by the kernel $k$ is given by
\begin{align*}
    \mathcal{H} = \left\{ f(x) =\sum_{i\geq 1} f_i \phi_i(x), x \in [0,1]: \sum_{i}\frac{f_i^2}{a_i} < \infty \right\},
\end{align*}
with the associated inner product defined by $\langle f,g\rangle_{\calH} = \sum_i a_i^{-1} f_i g_i$.
\item[(ii)] The space $\mathscr{R}(T^{1/2} (T^{1/2}CT^{1/2}))$ satisfies the inclusion $$\mathscr{R}(T^{1/2} (T^{1/2}CT^{1/2})) \subset \mathcal{\tilde{H}} \subset \calH,$$ where 
\begin{align*}
\mathcal{\tilde{H}} = \left\{ f(x) = \sum_i f_i \phi_i(x), x \in [0,1]: \sum_i \frac{f_i^2}{a_i \tau_i} < \infty\right\},
\end{align*}
is an RKHS induced by the kernel $\tilde{k}(x,y) = \sum_{i\geq 1} a_i \tau_i \phi_i(x) \phi_i(y)$ with inner product $\langle f,g\rangle_{\mathcal{\tilde{H}}} = \sum_{i\geq 1} f_ig_i (\tau_ia_i)^{-1}$. 
\end{enumerate}
\end{proposition}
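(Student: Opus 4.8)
The plan is to work throughout in the fixed orthonormal basis $(\phi_i)_i$ of $L^2([0,1])$ and expand every operator in sight in terms of its action on the coordinates $f_i = \langle f, \phi_i\rangle$. The spectral data we need are: $T = \mathfrak{I}\mathfrak{I}^*$ has kernel $k$, so $T\phi_i = a_i \phi_i$ only if $(\phi_i)$ diagonalizes $T$ — which is the hypothesis — hence $T = \sum_i a_i\,\phi_i\otimes\phi_i$; similarly $C = \sum_m b_m\,\psi_m\otimes\psi_m$. Part (i) is the classical Mercer/Moore–Aronszajn description of the RKHS of a kernel written in a diagonalizing basis, and I would simply cite the standard fact (e.g.\ \cite[Theorem 4.51]{SteChr2008} or a direct verification): $\calH = \range(T^{1/2})$ with $\|f\|_\calH^2 = \|T^{-1/2}f\|_{L^2}^2 = \sum_i f_i^2/a_i$, and the reproducing property is checked against $k(\cdot,y) = \sum_i a_i\phi_i(y)\phi_i$. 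This part is routine bookkeeping.

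For part (ii), the key computation is to get the diagonal entries of the operator $B := T^{1/2}(T^{1/2}CT^{1/2})$ — or rather of $T^{1/2}CT^{1/2}$ — in the $(\phi_i)$ basis. Writing $\psi_m = \sum_i \theta_{mi}\phi_i$ with $\theta_{mi} = \langle\psi_m,\phi_i\rangle$, one has $T^{1/2}CT^{1/2} = \sum_m b_m\,(T^{1/2}\psi_m)\otimes(T^{1/2}\psi_m)$ and $T^{1/2}\psi_m = \sum_i a_i^{1/2}\theta_{mi}\phi_i$, so the $(i,j)$ matrix entry of $T^{1/2}CT^{1/2}$ is $a_i^{1/2}a_j^{1/2}\sum_m b_m\theta_{mi}\theta_{mj} = a_i^{1/2}a_j^{1/2}\eta_{ij}$. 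Hence the $(i,j)$ entry of $T^{1/2}(T^{1/2}CT^{1/2})$ is $a_i\,a_j^{1/2}\eta_{ij}$ up to the remaining $T^{1/2}$ factor — I would track this carefully to identify $\range(T^{1/2}(T^{1/2}CT^{1/2}))$. The cleanest route is: any $u = T^{1/2}(T^{1/2}CT^{1/2})h$ for $h\in L^2$ has $i$-th coordinate $u_i = a_i^{1/2}\sum_j a_j^{1/2}\eta_{ij}\,(T^{1/2}h)_j$ — wait, more simply, factor $u = T^{1/2}\,T^{1/2}\,C\,T^{1/2}h = T\,C\,T^{1/2}h$, whose $i$-th coordinate is $a_i \sum_m b_m\theta_{mi}\langle\psi_m, T^{1/2}h\rangle = a_i\sum_j a_j^{1/2}\eta_{ij} h_j$. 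Then Cauchy–Schwarz in $j$ against the weights $a_j$ gives $u_i^2 \le a_i^2\,\big(\sum_j a_j\eta_{ij}^2\big)\,\|h\|^2 = a_i^2\,\tau_i\,\|h\|^2$ (using the definition $\tau_i = \sum_j a_j\eta_{ij}^2$ — note the index bookkeeping here, $\eta_{ij}$ is symmetric so $\sum_j a_j\eta_{ij}^2$ matches $\tau_i$), whence $\sum_i u_i^2/(a_i\tau_i) \le \|h\|^2 < \infty$, i.e.\ $u\in\tilde{\calH}$. That the weights $a_i\tau_i$ are summable (so $\tilde k$ is a bona fide Mercer kernel and $\tilde{\calH}$ an RKHS) follows from $\sum_i a_i\tau_i = \sum_i a_i\sum_j a_j\eta_{ij}^2 \le (\sup_j \tau_j)\sum_i a_i < \infty$; and $\tilde{\calH}\subset\calH$ is immediate from $\tau_i \le \sup_j\tau_j < \infty$ since then $f_i^2/a_i \le (\sup_j\tau_j)\,f_i^2/(a_i\tau_i)$. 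Part (i) then identifies $\tilde{\calH}$ as the RKHS of $\tilde k = \sum_i a_i\tau_i\phi_i\otimes\phi_i$ with the stated inner product.

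The main obstacle — really the only subtle point — is handling the case where some $\tau_i = 0$ (or more generally the null spaces of $T$ and of $T^{1/2}CT^{1/2}$): if $a_i = 0$ the coordinate $\phi_i$ drops out of $\calH$ entirely and the quotient $f_i^2/(a_i\tau_i)$ must be read as $+\infty$ unless $f_i = 0$, and similarly if $\tau_i = 0$ then $u_i = 0$ forcibly for any $u\in\range(T^{1/2}(T^{1/2}CT^{1/2}))$, which is consistent with the convention $0/0 = 0$ in the defining sum for $\tilde{\calH}$. I would dispatch this by first passing to the (closed) subspace spanned by those $\phi_i$ with $a_i > 0$, observing both $\calH$ and the range space live there, and interpreting all the weighted-$\ell^2$ sums with the usual convention that a zero denominator forces a zero numerator; with that convention in place the Cauchy–Schwarz estimate above goes through verbatim and the three claimed inclusions follow. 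The interchange of sums in the definition $\tau_i = \sum_j a_j(\sum_m b_m\theta_{mi}\theta_{mj})^2$ and in evaluating $\langle\psi_m, T^{1/2}h\rangle$ is justified by nonnegativity (Tonelli) together with the summability hypotheses $\sum_i a_i<\infty$, $\sum_m b_m<\infty$, $\sup_j\tau_j<\infty$, so no further care is needed there.
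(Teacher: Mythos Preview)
Your proposal is correct and follows essentially the same route as the paper: expand $T$, $C$, and $T^{1/2}CT^{1/2}$ in the $(\phi_i)$ basis, identify the coordinates of $u=T^{1/2}(T^{1/2}CT^{1/2})h$ as $u_i=a_i\sum_j a_j^{1/2}\eta_{ij}h_j$, and apply Cauchy--Schwarz in $j$ (against weights $a_j$) to bound $u_i^2\le a_i^2\tau_i\|h\|^2$, then read off membership in $\tilde{\calH}$ and use $\sup_j\tau_j<\infty$ for $\tilde{\calH}\subset\calH$. One small arithmetic slip: from $u_i^2\le a_i^2\tau_i\|h\|^2$ you get $\sum_i u_i^2/(a_i\tau_i)\le \|h\|^2\sum_i a_i$, not $\|h\|^2$; the conclusion is unaffected since $\sum_i a_i<\infty$. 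Your explicit handling of the $a_i=0$ or $\tau_i=0$ degeneracies is more careful than the paper, which leaves this implicit.
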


\begin{remark}
While $T^{1/2}CT^{1/2}$ is a positive self-adjoint operator, its eigenvalues and eigenfunctions are unknown, see~\eqref{eq:tctoperator}. If $\theta_{mi} = \delta_{mi}$, which happens when $\psi_m = \phi_i$ (i.e., in the commutative setting), then we obtain $\eta_{ij} = b_i\delta_{ij}$ and so $T^{1/2}C T^{1/2} = \sum_ia_ib_i \phi_i\otimes \phi_i$, yielding $(a_ib_i, \phi_i)_i$ as the eigensystem of $T^{1/2}CT^{1/2}$, which then implies that $(a^{3/2}_ib_i, \phi_i)_i$ is the eigensystem of $T^{1/2}(T^{1/2}CT^{1/2})$. Therefore, for any $f\in \mathscr{R}(T^{1/2} (T^{1/2}CT^{1/2})^\nu)$, there is a  $h \in L^2([0,1])$ such that we have $f=T^{1/2}(T^{1/2}CT^{1/2})^\nu h$. This implies $f=\sum_i a_i^{\nu+1/2}b_i^\nu h_i\phi_i$ where $h_i = \langle h,\phi_i \rangle$. It is easy to verify that $f \in \mathcal{H}'$, where 
\[
\calH'=\left\{ f(x)=\sum_if_i\phi_i(x), x\in[0,1]: \sum_i\frac{f_i^2}{a_i^{2\nu+1}b_i^{2\nu}} < \infty \right\},
\]
which is an RKHS induced by the kernel $k'(x,y) =\sum_i a_i^{2\nu+1}b_i^{2\nu} \phi_i(x)\phi_i(y)$ since, we have that  $f=T^{1/2}(T^{1/2}CT^{1/2})^\nu h$,
\[
\Vert f\Vert^2_{\mathcal{H}'}=\sum_{i}\frac{a_i^{2\nu+1} b_i^{2\nu} h_i^2}{a_i^{2\nu+1} b_i^{2\nu}} = \sum_i h_i^2 = \| h\|^2 < \infty.
\]
\end{remark}

\begin{remark}
Suppose $\phi_i = \cos(i\pi \cdot)$ and $a_i \propto i^{-2\alpha}$, for some $\alpha \in \mathbb{N}$. Then, for $f\in \calH$, we have $f(x) = \sum_i f_i \phi_i(x) = \sum_i f_i \cos(i\pi x)$, for $x\in[0,1]$ where $\sum_i i^{2\alpha} f_i^2 < \infty$. Note that we have $f^{(\alpha)}(x) = \sum_i\pi^\alpha i^\alpha f_i \cos(i\pi x)$, which implies $\| f^{(\alpha)}\|^2=\pi^{2\alpha} \sum_ii^{2\alpha} f_i^2 = c_1 \|f\|^2_{\calH}$, for some constant $c_1>0$. That is, $\calH$ consists of $\alpha$-times differentiable functions that are square integrable. Suppose $b_i \propto i^{-2\lambda}$ for some $\lambda \in \mathbb{N}$. Then under the conditions of Proposition~\ref{prop:char}, we obtain that $\tilde{\mathcal{H}}$ consists of functions that are $(\alpha+\lambda)$-times differentiable and square-integrable, i.e., the degree of smoothness of  $\mathscr{R}(T^{1/2} (T^{1/2}CT^{1/2}))$ is at least  $\lambda$ more than that of  $\calH$.

\end{remark}

\textcolor{black}{Note that Mercer's theorem allows expansion of the kernel as in Proposition 4.1, wherein $(a_i,\phi_i)_i$ forms the eigensystem of the integral operator, $\mathcal{T}$. Since $S=[0,1]$ and the measure is Lebesgue, the choice of $\phi_i(t)=\cos(i\pi t)$ yields a translation-invariant kernel (see Remark~\ref{rem:spline}) but other choices are possible that yield kernels that are not translation invariant, e.g., $a_i=\frac{1}{i!}$, $\phi_i(x)=x^i$ and $k(x,y)=e^{xy}$.} We now consider concrete examples of covariance kernels and provide interpretations of the result in Proposition~\ref{prop:char}. We let $\phi_i(x)=\cos(i\pi x)$, $x\in[0,1]$.

\begin{example}[Fourier basis]
Suppose $\psi_m = \cos(\omega_m \pi \cdot)$ where $\omega_m=am+b$ for some $a,b \in \mathbb{R}$ such that $\omega_m \notin \mathbb{Z}$ and $m\in \mathbb{N}$. Let $b_m \lesssim m^{-(1+\delta)}$, for some $\delta > 0$. In fact, one can assume without loss of generality that $\omega_m >0$ for all $m \in \{1,2,3,\ldots \}$ or equivalently $a>0$. Then by Lemma A.7, 
we have 
\begin{align*}
    \theta_{mi} &= \langle\psi_m,\phi_i \rangle\\
    & = \int \cos(\omega_m \pi x) \cos(i \pi x) dx \\
    &= \frac{i\pi}{(i\pi)^2 - \omega_m^2\pi^2} \cos(\pi \omega_m) \sin(i\pi) - \frac{\omega_m\pi}{(i\pi)^2 - \omega_m^2 \pi^2} \sin(\pi \omega_m) \cos(i\pi)\\
    &= \frac{\pi \omega_m}{\pi^2 \omega_m^2 - (i\pi)^2} \sin(\pi \omega_m)(-1)^i.
\end{align*}
Furthermore, \begin{align}
    \eta_{ij} = &\sum_m b_m \theta_{mi}\theta_{mj} = \frac{1}{\pi^2}\sum_mb_m \frac{\omega_m}{\omega_m^2-i^2} \frac{\omega_m}{\omega_m^2-j^2} \sin^2(\pi \omega_m) (-1)^{i+j} \notag \\ \stackrel{(*)}{\lesssim}& (ij)^{-\min\left(1,\frac{\delta+1}{2}\right)},\label{Eq:etaij}
\end{align}
where $(*)$ is proved in Appendix B. 
This implies that $\tau_j \lesssim j^{-\min(\delta+1,2)}$ and $\sup_j|\tau_j| < \infty$. Hence, the inclusion $\mathscr{R}(T^{1/2} (T^{1/2}CT^{1/2})) \subset \tilde{\calH} \subset \calH$,  follows from Proposition~\ref{prop:char}, where $\tilde{\calH}$ consists of functions with a degree of smoothness that is by an amount $\min\left(1, \frac{1+\delta}{2}  \right)$ higher than that of the functions in $\calH$.
\end{example}
\begin{remark}\label{Rem:sbm}
The covariance function of a standard Brownian motion is $c(x,y)=\min(x,y)$. It is well known that the eigenvalues and eigenfunctions are  $b_m= \frac{1}{\pi^2(m-\frac{1}{2})^2}$ and $\psi_m(x)=\sqrt{2} \sin \left(\pi (m-\frac{1}{2}) x \right)$. From the above example, it then follows that the space $\mathscr{R}(T^{1/2} (T^{1/2}CT^{1/2}))$ consists of functions that are at least one degree smoother than the functions in $\calH$. 
\end{remark}
\begin{example}[Haar wavelet basis]\label{Exm:Haar}
Let $\psi_m$ be the Haar-wavelet basis function given by
\begin{align}\label{eq:haar}
    \psi_{2^{j}+\ell -1}(x) = \begin{cases}
    +2^{\tfrac{j}{2}}, & \text{for}~ x \in \left[\frac{\ell-1}{2^j}, \frac{\ell-1/2}{2^j}\right] \\
    -2^{\tfrac{j}{2}}, & \text{for }x \in \left[\frac{\ell-1/2}{2^j}, \frac{\ell}{2^j}\right]  \\
    0, & \text{otherwise.}
  \end{cases}
\end{align}
In this case, $|\eta_{ij}| = |\sum_m b_m \theta_{mi} \theta_{mj}| \leq \sum_m b_m |\theta_{mi}||\theta_{mj}|$ with 
\[
\theta_{mi} = \int_0^1  \psi_m(x) \cos(i\pi x) dx  \leq \frac{4}{i\pi} 2^{\floor{\log_2 m}/2},
\]
which follows from Lemma A.8. 
Therefore,
\[
|\eta_{ij}| \leq \sum_m b_m \frac{16}{ij\pi^2} 2^{\floor{\log_2 m}} \leq \frac{16}{ij\pi^2} \sum_m b_m 2^{\log_2 m} = \frac{16}{ij\pi^4} \sum_m b_m m.
\]
Hence, we have
\[
\eta_{ij}^2 \leq \frac{256}{(ij)^2\pi^4} \left(\sum_m b_m m \right)^2
\]
and 
\[
\tau_j = \sum_i a_i \eta_{ij}^2 = \frac{256}{j^2\pi^4} \left(\sum_m b_m m \right)^2\sum_i\frac{a_i}{i^2} \lesssim \frac{1}{j^2}
\]
for any choice of eigenvalues $b_m$ with $\sum_m b_m m < \infty$. Therefore,  $\mathscr{R}(T^{1/2} (T^{1/2}CT^{1/2})) \subset \tilde{\calH} \subset \calH$,  where $\tilde{\calH}$ is an RKHS with one degree of smoothness more than that of $\calH$. 
\end{example}
\begin{remark}\label{rem:spline}
\cite{cai2012minimax} considered the \textcolor{black}{spline} kernel 
\begin{align}\label{eq:simkernel}
k(x,y) = -\frac{1}{3}\left[ B_4(x+y)+B_4(|x-y|)\right],
\end{align}
where $B_4$ is the fourth Bernoulli polynomial. In this case, it can be shown that $a_i=2/(i\pi)^4$ and $\phi_i(x)=\cos(i\pi x)$, and $\calH$ is an RKHS \textcolor{black}{(in fact, a periodic Sobolev space)} of twice differentiable functions which are square integrable on $[0,1]$. For this choice of $a_i$, $\tilde{\calH}$ is the space of thrice differentiable functions which are square integrable on $[0,1]$ for the covariance functions considered in Remark~\ref{Rem:sbm} and Example~\ref{Exm:Haar}.
\end{remark}

\section{Consequences for prediction error in functional linear models}\label{sec:predconseq}
In this section, we investigate the prediction error for the linear predictor $\langle X,\beta^*\rangle$, which is identical to 
$\langle X,\tbeta\rangle$ in the setting of a functional linear model where $\tbeta=\beta^*$.  The prediction error in functional linear models was studied previously in \cite{cai2012minimax}, which showed that a reasonable proxy is  $\| C^{1/2}(\hat{\beta} - \beta^*)\|$, which they analyzed without invoking a commutativity requirement between $T$ and $C$, but under the assumption that $\beta^*\in\calH$.
In the following, we generalize this result as follows. First, in Theorem~\ref{thm:predictionmasterthm} (proved in Section~\ref{subsec:main-pred}), we present a master theorem for the prediction error that does not rely on the assumption  $\beta^*\in \calH$. We specialize this result to non-commutative and commutative settings in Theorems~\ref{thm:predictionnoncommutativeassumption} (proved in Section~\ref{subsec:commutative2}) and \ref{thm:predictioncommutativeassumption} (proved in Section~\ref{subsec:noncommutative2}), respectively, wherein the non-commutative setting recovers the result of \cite{cai2012minimax}, while the commutative setting addresses the scenario of $\beta^*\in L^2(S)\backslash\calH$. 


\begin{theorem}[Master theorem for prediction]\label{thm:predictionmasterthm}
Let $\| T^{-\alpha} \beta^*\| < \infty$ for $\alpha \in (0,1/2]$, i.e., $\tbeta \in \range(T^\alpha)$. Let $\Uptheta$, $d(\lambda)$ be as defined in~\eqref{eq:condition1}, and $N(\lambda)$ be as defined in~\eqref{eq:condition2}. 
Then for $\delta$, $n$ and $\lambda$ satisfying the conditions in~\eqref{eq:condition3}, 
with probability at least $1-3\delta$, we have
\begin{align*}
\| C^{1/2}(\hat{\beta} - \beta^*)\|^2 &\lesssim_p   \frac{\sigma^2 N(\lambda)}{n\delta} + \lambda^2 \| T^{-\alpha}\beta^*\|^{2}\frac{\|\Uptheta\| \emph{\trace}(\Uptheta)}{n}\\
&~~+ \|C^{1/2}T(CT+\lambda I)^{-1} C\beta^* -C^{1/2}\beta^*\|^2.
\end{align*}
\end{theorem}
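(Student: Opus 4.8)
The plan is to derive the prediction-error bound from the estimation bound machinery already developed for Theorem~\ref{thm:masterthm}, but replacing the $L^2(S)$-norm on $\hat\beta - \tbeta$ with the $C^{1/2}$-weighted norm. Concretely, I would start from the exact algebraic decomposition of $\hat\beta - \beta^*$ (equivalently $\hat\beta - \tbeta$ since $\tbeta = \beta^*$ in the linear case so $\vartheta_{g,\beta^*}=1$) into a deterministic approximation (bias) part and a stochastic (variance) part. Writing $\hat\beta = (\id^*\hat C\id + \lambda I)^{-1}\id^*\hat R$ and using $\hat R = \hat C\tbeta + \frac1n\sum_i \epsilon_i X_i$ (valid in the linear model, with $\epsilon_i = Y_i - \langle X_i,\beta^*\rangle$), one gets
\begin{align*}
\hat\beta - \tbeta = \underbrace{(\id^*\hat C\id+\lambda I)^{-1}\id^*\hat C\id\,\tbeta - \tbeta}_{\text{bias-type term}} + \underbrace{(\id^*\hat C\id+\lambda I)^{-1}\id^*\Big(\tfrac1n\textstyle\sum_i \epsilon_i X_i\Big)}_{\text{noise term}}.
\end{align*}
Then I would apply $C^{1/2}\id = C^{1/2}T^{1/2}\cdot(\text{partial isometry})$ — more precisely use $\|C^{1/2}\id f\| = \|C^{1/2}T^{1/2} u\|$ relations — to convert everything into quantities governed by $\Lambda = T^{1/2}CT^{1/2}$, exactly the operator appearing in $N(\lambda)$, $\Uptheta$, and the bias term of the statement.

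The second step is to replace the empirical operators $\hat C$ by the population operator $C$ using concentration. This is where the conditions in \eqref{eq:condition3} enter: $n \gtrsim d(\lambda)\vee\log(1/\delta)$ and $\trace(T^{1/2}CT^{1/2})/n \lesssim \lambda \lesssim \|T^{1/2}CT^{1/2}\|$ are precisely the effective-dimension/operator-norm conditions that let one show $\|(\id^*\hat C\id+\lambda I)^{-1/2}(\id^*C\id+\lambda I)^{1/2}\| = O_p(1)$ and analogous two-sided bounds on $\lambda$-regularized resolvents — these are the standard Bernstein-for-operators arguments (cf. the proof of Theorem~\ref{thm:masterthm}), and I expect the relevant lemmas to be stated and used in Section~\ref{sec:proof}. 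Having absorbed the empirical-to-population swap, the noise term's squared $C^{1/2}$-norm has expectation of order $\sigma^2\,\trace\!\big[(\Lambda+\lambda I)^{-1}\Lambda(\Lambda+\lambda I)^{-1}\Lambda\big]/n \le \sigma^2 N(\lambda)/n$; a Markov/Chebyshev step on this nonnegative random variable produces the $\sigma^2 N(\lambda)/(n\delta)$ term with the $1/\delta$ factor, matching the statement.

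The third step handles the bias-type term $\big\|C^{1/2}\big[(\id^*\hat C\id+\lambda I)^{-1}\id^*\hat C\id - I\big]\tbeta\big\|$. After the population swap this becomes, up to $O_p(1)$ multiplicative constants, $\|C^{1/2}T(CT+\lambda I)^{-1}C\beta^* - C^{1/2}\beta^*\|$, which is exactly the third term in the claimed bound — so no smoothness (range) condition is needed to name it; it is left as is. The smoothness hypothesis $\|T^{-\alpha}\beta^*\| < \infty$ is used only to control the \emph{fluctuation} of this bias term around its population value: the difference, when expanded, carries a factor $\|T^{1/2}(CT+\lambda I)^{-1}\cdots\rangle$ acting on $T^{-\alpha}\beta^*$, and bounding the operator piece in Hilbert--Schmidt norm against $\hat C - C$ produces the $\lambda^2\|T^{-\alpha}\beta^*\|^2 \|\Uptheta\|\,\trace(\Uptheta)/n$ term, with $\Uptheta = T^\alpha(CT+\lambda I)^{-1}C(TC+\lambda I)^{-1}T^\alpha$ exactly as defined in \eqref{eq:condition1}. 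I would then combine the three pieces by a union bound over the (at most three) high-probability events, giving the stated $1-3\delta$ and the sum of the three terms.

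The main obstacle is the careful bookkeeping of the non-commuting operators $T$ and $C$ when converting the $C^{1/2}$-weighted quantities into functions of $\Lambda = T^{1/2}CT^{1/2}$: identities like $C^{1/2}(CT+\lambda I)^{-1} = (T^{1/2}CT^{1/2}+\lambda I)^{-1}C^{1/2}$ hold only after inserting the right $T^{1/2}$ factors and using that $\id^*X \mapsto X$ intertwines $T$ and the reproducing structure, so I would need to be meticulous that each resolvent is conjugated correctly and that the Hilbert--Schmidt bounds on the fluctuation terms genuinely reduce to $\trace(\Uptheta)$ and $\trace$-type expressions in $\Lambda$ rather than something larger. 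A secondary, milder difficulty is that, unlike the estimation theorem, here we do not use the full strength of the range condition to kill the leading bias — it survives in the bound — so one must be careful that the fluctuation analysis of the bias term does not secretly require $\alpha = 1/2$; checking that $\alpha \in (0,1/2]$ suffices for the $\Uptheta$-term to be finite and well-defined is the key sanity check.
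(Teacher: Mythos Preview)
Your proposal is correct and follows essentially the same route as the paper: the paper writes $\|C^{1/2}(\hat\beta-\beta^*)\| \le \|A^{1/2}(\hat\beta-\beta_\lambda)\|_\calH + \|C^{1/2}(\id\beta_\lambda-\beta^*)\|$ with $A=\id^*C\id$, then reuses Terms~2--5 from the proof of Theorem~\ref{thm:masterthm} verbatim, the only change being that Term~1 (the factor $\|\Xi\|^{1/4}$) is replaced by $\|A^{1/2}(A+\lambda I)^{-1/2}\|\le 1$ and Term~6 by its $C^{1/2}$-weighted version. Your bias/noise splitting is algebraically equivalent to the paper's $\beta_\lambda$-centered decomposition after regrouping, and the three high-probability pieces you describe match exactly the paper's Terms~2--3, Term~4 (with $\varkappa=0$ in the linear case), and Term~5.
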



We now present a specialization of the above result when $T$ and $C$ are not commuting and $\alpha=1/2$.
\begin{theorem}[Prediction for noncommutative operators]\label{thm:predictionnoncommutativeassumption}
Suppose $\beta^* \in \range(T^{1/2})$. Let $(\zeta_i)_{i\in\mathbb{N}}$ denote the eigenvalues of $T^{1/2} C T^{1/2}$ with $i^{-b} \lesssim \zeta_i \lesssim i^{-b}$, for some $b >1$.  Then, for
\begin{align}\label{thm7claim}
\lambda = n^{-\tfrac{b}{1+b}}, \quad\text{we have}\quad \| C^{1/2}(\hat{\beta} - \beta^*)\|\lesssim_p n^{-\tfrac{b}{1+b}}.
\end{align}
\end{theorem}
Compared to Theorem~\ref{thm:tcteigendecay}, Theorem~\ref{thm:predictionnoncommutativeassumption} requires only the weaker assumption that $\beta^* \in \range(T^{1/2})$ instead of $\beta^* \in \range (T^{1/2} (T^{1/2} C T^{1/2})^\nu)$, for some $\nu \in (0,1] $. This is because the prediction error is a weaker notion than the estimation error. The rate obtained in \eqref{thm7claim} was shown to be minimax optimal in \citet{cai2012minimax}.

The following result is another specialization of Theorem~\ref{thm:predictionmasterthm},  where $\beta^*$ is relaxed to lie outside $\calH$ but $T$ and $C$ are assumed to commute. Thus compared to Theorem~\ref{thm:predictionnoncommutativeassumption}, Theorem~\ref{thm:predictioncommutativeassumption} considers the alternate setting with a weaker assumption on  $\beta^*$ and a stronger assumption on $T$ and $C$.

\begin{theorem}[Prediction for commutative operators]\label{thm:predictioncommutativeassumption}
Let $\| T^{-\alpha} \beta^*\| < \infty$ for $\alpha \in (0,1/2]$.  
Suppose the operators $T$ and $C$ commute and have simple eigenvalues (i.e., of multiplicity one) denoted by $\mu_i$ and $\xi_i$ for $i \in \mathbb{N} $, such that for some $t >1$ and $c>1$, they satisfy the condition in~\eqref{Eq:eigdecay-t-c}. Then, by setting $\lambda$ as in~\eqref{eq:thm2claim}, we obtain 
\begin{align}\label{eq:thm6claim}
\| C^{1/2}(\hat{\beta} - \beta^*)\| &\lesssim_p n^{-~\tfrac{2\alpha t+c}{1+c+2t(1-\alpha)}}.
\end{align}
\end{theorem}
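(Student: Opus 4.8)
The plan is to obtain this as a direct specialization of the master prediction bound in Theorem~\ref{thm:predictionmasterthm}, evaluating its three terms under the commutativity and eigendecay hypotheses in exactly the way the estimation rate of Theorem~\ref{thm:commutativeassumption} is extracted from Theorem~\ref{thm:masterthm}. Since $T$ and $C$ commute, are self-adjoint, and have simple eigenvalues, they share an orthonormal eigenbasis $(e_i)_{i\in\mathbb{N}}$ of $\Ltwo$ with $Te_i=\mu_ie_i$ and $Ce_i=\xi_ie_i$; hence $T^{1/2}CT^{1/2}$, $\Uptheta$, and every operator appearing in Theorem~\ref{thm:predictionmasterthm} is diagonal in $(e_i)$, and $\zeta_i:=\mu_i\xi_i$ obeys $i^{-(t+c)}\lesssim\zeta_i\lesssim i^{-(t+c)}$. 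Writing $\beta^*=\sum_i\beta^*_ie_i$, the hypothesis $\|T^{-\alpha}\beta^*\|<\infty$ reads $\sum_i\mu_i^{-2\alpha}(\beta^*_i)^2<\infty$, and I would first record the spectral forms
\[
N(\lambda)=\sum_i\frac{\zeta_i}{\zeta_i+\lambda},\qquad \Uptheta=\sum_i\frac{\mu_i^{2\alpha}\xi_i}{(\mu_i\xi_i+\lambda)^2}\,e_i\otimes e_i,
\]
so that $\|\Uptheta\|=\sup_i\frac{\mu_i^{2\alpha}\xi_i}{(\mu_i\xi_i+\lambda)^2}$ and $\trace(\Uptheta)=\sum_i\frac{\mu_i^{2\alpha}\xi_i}{(\mu_i\xi_i+\lambda)^2}$, together with the elementary computation that the $i$-th coordinate of $C^{1/2}T(CT+\lambda I)^{-1}C\beta^*-C^{1/2}\beta^*$ equals $-\lambda\,\xi_i^{1/2}\beta^*_i/(\mu_i\xi_i+\lambda)$, whence
\[
\|C^{1/2}T(CT+\lambda I)^{-1}C\beta^*-C^{1/2}\beta^*\|^2=\lambda^2\sum_i\frac{\xi_i(\beta^*_i)^2}{(\mu_i\xi_i+\lambda)^2}\le\lambda^2\,\|\Uptheta\|\,\|T^{-\alpha}\beta^*\|^2 .
\]

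Next I would estimate these quantities with the same elementary decay lemmas used in the proof of Theorem~\ref{thm:commutativeassumption}, namely Lemma~\ref{lem:theorem2relatedlemma1} for the supremum and Lemma~\ref{lem:theorem2relatedlemma2} for the sums. Since $\mu_i^{2\alpha}\xi_i\asymp i^{-(2\alpha t+c)}$, $\zeta_i\asymp i^{-(t+c)}$, and $\tfrac{2\alpha t+c}{t+c}\le 1$ (here $\alpha\le 1/2$ is used), Lemma~\ref{lem:theorem2relatedlemma1} gives $\|\Uptheta\|\lesssim\lambda^{\frac{2\alpha t+c}{t+c}-2}$; because $2\alpha t+c>c>1$, Lemma~\ref{lem:theorem2relatedlemma2} gives $\trace(\Uptheta)\lesssim\lambda^{-\frac{1+c+2t(1-\alpha)}{t+c}}$ and $N(\lambda)\lesssim\lambda^{-\frac{1}{t+c}}$. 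In particular $d(\lambda)=\trace(\Uptheta)/\|\Uptheta\|\lesssim\lambda^{-1/(t+c)}$, which together with $\trace(T^{1/2}CT^{1/2})=\sum_i\mu_i\xi_i<\infty$ (valid since $t+c>2$) shows that for $\lambda=n^{-\frac{t+c}{1+c+2t(1-\alpha)}}$ and all large $n$ the admissibility conditions~\eqref{eq:condition3} are met, so Theorem~\ref{thm:predictionmasterthm} applies.

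Finally, inserting this $\lambda$ (the same choice as in~\eqref{eq:thm2lambda}), each of the three terms of Theorem~\ref{thm:predictionmasterthm} turns out to be of order at most $n^{-\frac{2\alpha t+c}{1+c+2t(1-\alpha)}}$: the approximation term is $\lesssim\lambda^2\|\Uptheta\|\lesssim\lambda^{\frac{2\alpha t+c}{t+c}}=n^{-\frac{2\alpha t+c}{1+c+2t(1-\alpha)}}$; the misspecification-variance term $\lambda^2\|T^{-\alpha}\beta^*\|^2\|\Uptheta\|\trace(\Uptheta)/n\lesssim\lambda^{\frac{2\alpha t+c}{t+c}}\,\trace(\Uptheta)/n$ is of the same order since $\trace(\Uptheta)\asymp n$ at this $\lambda$; and the noise term $\sigma^2N(\lambda)/(n\delta)\lesssim\lambda^{-1/(t+c)}/n=n^{-\frac{c+2t(1-\alpha)}{1+c+2t(1-\alpha)}}$ is no larger, because $c+2t(1-\alpha)\ge 2\alpha t+c$ whenever $\alpha\le 1/2$. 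Summing the three contributions gives $\|C^{1/2}(\hat{\beta}-\beta^*)\|^2\lesssim_p n^{-\frac{2\alpha t+c}{1+c+2t(1-\alpha)}}$, which is the asserted bound~\eqref{eq:thm6claim}. I expect the only delicate point to be the joint treatment of $\|\Uptheta\|$ and $\trace(\Uptheta)$: one must check that the exponent $\tfrac{2\alpha t+c}{t+c}$ is $<2$, so the supremum defining $\|\Uptheta\|$ is governed by the ``elbow'' index $i\asymp\lambda^{-1/(t+c)}$ rather than by $i=1$, and that $2\alpha t+c>1$, so the sum defining $\trace(\Uptheta)$ is controlled by its head; the facts $\alpha\le 1/2$ and $c>1$ precisely supply these, and it is their interplay that makes the approximation error and the estimation-error variance balance at the stated $\lambda$.
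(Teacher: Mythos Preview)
Your proposal is correct and follows essentially the same route as the paper: both specialize Theorem~\ref{thm:predictionmasterthm} by using commutativity to diagonalize $N(\lambda)$, $\Uptheta$, and the bias, bound the resulting scalar sums and suprema via Lemmas~\ref{lem:theorem2relatedlemma1}--\ref{lem:theorem2relatedlemma2} (exactly the bounds \eqref{eq:temp7}--\eqref{eq:temp8} reused from the proof of Theorem~\ref{thm:commutativeassumption}), and then substitute the $\lambda$ from~\eqref{eq:thm2lambda}. The only cosmetic differences are that you bound the bias term as $\lambda^2\|\Uptheta\|\,\|T^{-\alpha}\beta^*\|^2$ by factoring out $\mu_i^{2\alpha}$, whereas the paper writes $\beta^*=T^\alpha h$ and applies Lemma~\ref{lem:theorem2relatedlemma1} directly to $\sup_i \xi_i^{1/2}\mu_i^\alpha/(\mu_i\xi_i+\lambda)$, and that the paper absorbs the $N(\lambda)/n$ term into the larger $\lambda^2\|\Uptheta\|\trace(\Uptheta)/n$ term rather than treating them separately.
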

The above result extends the results of~\cite{yuan2010reproducing} to the case where $\beta^*$ does not necessarily lie in the RKHS $\calH$. When $\alpha=1/2$,  we recover the corresponding result from~\cite{yuan2010reproducing}, which matches with \eqref{thm7claim}.

\section{Numerical simulations}\label{sec:experiments}
In this section, using numerical simulations, we examine the robustness of the proposed method for non-Gaussian predictors while validating the presented theoretical results for Gaussian predictors. To this end, we let $\epsilon \sim N(0,1)$ in \eqref{eq:model} and follow the setup in~\cite{hall2007methodology} and~\cite{cai2012minimax} for $\beta^*(t)$ and $X(t)$, wherein $\beta^*(t)\coloneqq \sum_{j=1}^{50} 4(-1)^{(k+1)}k^{-2} \phi_k(t)$ with $\phi_1(t)\equiv 1$ and $\phi_{k+1}(t) = \sqrt{2} \cos(k\pi\,t),\,t\in[0,1]$, for $k\geq 1$, and $X(t)=\sum_{k=1}^{50} (-1)^{(k+1)} k^{-2}Z_k \phi_{k}(t)$, with $Z_k$ being one of the following:
\begin{itemize}[noitemsep,leftmargin=0.2in]
    \item \textit{Gaussian:} $Z_k \sim N(0,1)$ which leads to Gaussian process predictors, satisfying our assumptions. 
    \item \textit{Non-Gaussian:} $Z_k \sim \textsc{unif}[-3,3]$, which does not satisfy our assumptions. 
\end{itemize}
Following~\cite{yang2017estimating}, we consider four choices for the link function: \textit{(i)} linear, \textit{(ii)} $g_1(u)=g(u) = 3 u + 10 \sin(u) $, \textit{(iii)} $g_2(u)=g(u) = \sqrt{2} u + 4 \exp(-2 u^2)$, and \textit{(iv)} logistic, and following~\cite{cai2012minimax}, the kernel is chosen to be the one defined in~\eqref{eq:simkernel}. Using the above, our estimator is constructed as described in the paragraph below~\eqref{Eq:finite-dim}. In the construction of the kernel matrix, we select a grid size of $100$ to approximate the integral over $S=[0,1]$. Since our main focus is to compare the estimated direction to the true direction without explicitly providing consideration to $\modelconstant$, we consider the cosine distance between the estimator and the truth, defined as $ 1- \| \hat \beta \|^{-1}\| \beta^* \|^{-1}   \langle \hat \beta(t), \beta^*(t) \rangle$ as a measure of the quality of estimation. Finally, since our main purpose is only to demonstrate the robustness of the proposed approach to any deviations from the assumptions, we set the tuning parameter manually to the best-performing one.  

\begin{figure}
\vspace{1mm}
    \centering
    \includegraphics[width=0.4\linewidth]{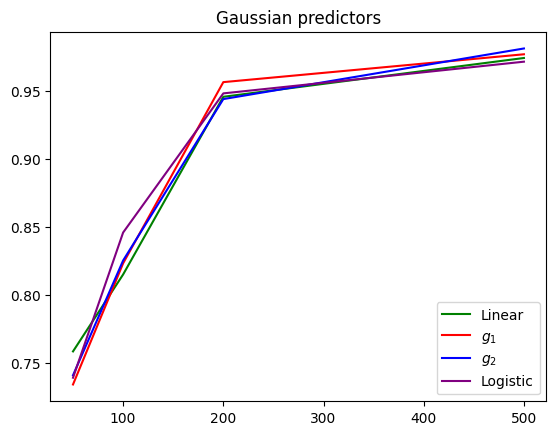}        \includegraphics[width=0.4\linewidth]{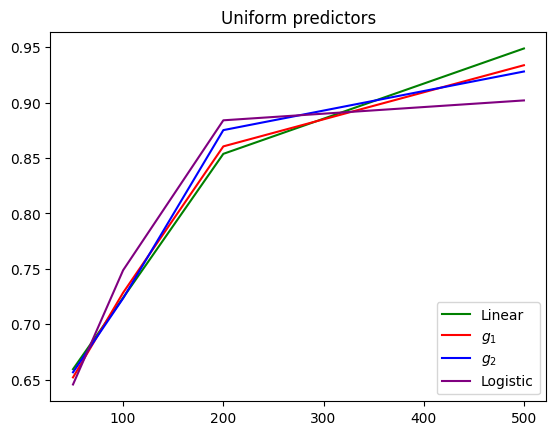}
    \caption{Cosine distance versus sample size: The figure on the left corresponds to the case of Gaussian predictors and figure on the right correspond to the case of uniform predictors. }
    \label{fig:enter-label}
\end{figure}

In Figure~\ref{fig:enter-label}, we show the cosine distance averaged over 1000 simulations. From the results, the following two observations that support our methodology and theory can be made. First, note that despite the true model being not necessarily a linear model, the linear estimator succeeds in estimating the direction. Second, although our methodology and theoretical results are derived under the Gaussian process assumption, the proposed approach works equally well with non-Gaussian predictors.~\cite{goldstein2019non} used a non-Gaussian version of finite-dimensional Stein's identity to explain this observation for the Euclidean setting. In the infinite-dimensional setting, non-Gaussian Stein's identities are not well explored. Deriving such results and providing theoretical support for this empirical observation are left as intriguing future work. 



\section{Proofs} 
\label{sec:mainproof}
In this section, we provide the proof of the results of Sections~\ref{sec:theorems}-\ref{sec:predconseq}. 

\subsection{Proof of Theorem~\ref{thm:masterthm}} \label{subsec:master}
The proof proceeds by first decomposing $\| \hat{\beta} - \tbeta\|$ into several terms which are subsequently upper-bounded individually. \textcolor{black}{Recall $\hat{C} \coloneqq \frac{1}{n}\sum_{i=1}^n X_i\otimes X_i$ and $\hat{R} \coloneqq \frac{1}{n}\sum_{i=1}^n Y_iX_i$}. Define
\begin{align*}
A \coloneqq \id^* C \id,\,\, \hat{A} \coloneqq \id^* \hat{C} \id,\,\,B = \id^* \id,\,\,\text{and}
\end{align*}
\begin{align}\label{eq:betalambda}
{\beta}_{\lambda} = \left[ A + \lambda I \right]^{-1} \id^* \E[YX]  = \left[ A + \lambda I \right]^{-1}  \id^*C\tbeta,
\end{align}
where the last equality in $\beta_\lambda$ follows from Stein's identity. 
By the definition of $ \beta_\lambda$ in~\eqref{eq:betalambda}, we have 
\begin{align*}
\hat\beta -\beta_\lambda &= (\hat A + \lambda I)^{-1} \left[ \id^*\hat R - (\hat A+ \lambda I) \beta_\lambda \right]
=(\hat A + \lambda I)^{-1} \left[ \id^*\hat R - \hat A\beta_\lambda - \lambda  \beta_\lambda \right] \\
&=(\hat A + \lambda I)^{-1} \left[ \id^*\hat R - \hat A\beta_\lambda + A  \beta_\lambda -  \id^*C\tbeta \right] \\ 
&=(\hat A + \lambda I)^{-1} \left[ \id^*\hat R -  \id^* \hat{C} \id\beta_\lambda +  \id^* {C} \id  \beta_\lambda -  \id^*C\tbeta \right] \\ 
&=(\hat A + \lambda I)^{-1} \left[ \id^*\hat R -  \id^* \hat{C} \tbeta +  \id^* \hat{C}  \tbeta +  \id^*C\id\beta_\lambda - \id^*C\tbeta -\id^*\hat{C}\id \beta_\lambda \right] \\ 
&=(\hat A + \lambda I)^{-1} \left[ \id^*\hat R -  \id^* \hat{C} \tbeta  +  \id^* (C- \hat{C}) (\id\beta_\lambda - \tbeta )\right].
\end{align*}
Based on the above identity, we then have
\begin{align*}
\| \hat{\beta} - \tbeta\| &= \| \id \hat{\beta} - \tbeta\|= \left\|\id( \hat{\beta} - \beta_\lambda) + \id \beta_\lambda - \tbeta\right \| \nonumber \\
&\leq \| \id(\hat\beta -\beta_\lambda) \| + \| \id\beta_\lambda -\tbeta\|
= \| B^{1/2}(\hat\beta -\beta_\lambda) \|_\calH +  \| \id\beta_\lambda -\tbeta\|  \nonumber  \\
& = \left\| B^{1/2}(\hat A + \lambda I)^{-1} \left[ \id^*\hat R -  \id^* \hat{C} \tbeta  +  \id^* (C- \hat{C}) (\id\beta_\lambda - \tbeta )\right]\right \|_\calH 
+  \| \id\beta_\lambda -\tbeta\|.  \nonumber \end{align*}
Since
\begin{align*}
&B^{1/2}(\hat A + \lambda I)^{-1} \left[ \id^*\hat R -  \id^* \hat{C} \tbeta  +  \id^* (C- \hat{C}) (\id\beta_\lambda - \tbeta )\right]\\
&=B^{1/2}(A + \lambda I)^{-1/2} ( A + \lambda I)^{1/2}  (\hat A + \lambda I)^{-1/2}  ( \hat A + \lambda I)^{-1/2}( A + \lambda I)^{1/2}  \nonumber  \\ &\qquad\qquad \cdot ( A + \lambda I)^{-1/2}\left[ \id^*\hat R -  \id^* \hat{C} \tbeta  +  \id^* (C- \hat{C}) (\id\beta_\lambda - \tbeta )\right]
\end{align*}
we obtain
\begin{align*}
&\left\| B^{1/2}(\hat A + \lambda I)^{-1} \left[ \id^*\hat R -  \id^* \hat{C} \tbeta  +  \id^* (C- \hat{C}) (\id\beta_\lambda - \tbeta )\right]\right \|_\calH\\
&\leq\| B^{1/2}(A + \lambda I)^{-1/2} \| \cdot \| ( A + \lambda I)^{1/2}   (\hat A + \lambda I)^{-1/2} \| \nonumber\\
&\qquad \cdot \|  ( \hat A + \lambda I)^{-1/2} ( A + \lambda I)^{1/2} \|  \nonumber \\
&\qquad\qquad \cdot\left\| ( A + \lambda I)^{-1/2} \left[ \id^*\hat R -  \id^* \hat{C} \tbeta  +  \id^* (C- \hat{C}) (\id\beta_\lambda - \tbeta )\right] \right\|_\calH,
\end{align*}
therefore resulting in
\begin{align}
\label{eq:maindecomp}
\| \hat{\beta} - \tbeta\|
& \leq  \underbrace{\| B^{1/2}(A + \lambda I)^{-1/2} \|}_{\texttt{Term 1}}\cdot \underbrace{\| ( A + \lambda I)^{1/2}   (\hat A + \lambda I)^{-1/2} \|}_{\texttt{Term 2}}\nonumber\\
&\qquad\cdot \underbrace{\|  ( \hat A + \lambda I)^{-1/2} ( A + \lambda I)^{1/2} \|}_{\texttt{Term 3}}   
 \cdot\left[ \underbrace{ \left\| ( A + \lambda I)^{-1/2} \left[ \id^*\hat R -  \id^* \hat{C} \tbeta \right] \right \|_\calH}_{\texttt{Term 4}}\right.\nonumber\\
 &\qquad\qquad\left.+  \underbrace{\left\| ( A + \lambda I)^{-1/2}  \id^* (C- \hat{C}) (\id\beta_\lambda - \tbeta ) \right\|_\calH}_{\texttt{Term 5}} \right]
+ \underbrace{ \| \id\beta_\lambda -\tbeta\|}_{\texttt{Term 6}}. 
\end{align}

\subsubsection*{Bounding \texttt{Term 1}}
\begin{align*}
\|B^{1/2} (A+\lambda I)^{-1/2}\|^2 
& =\|(A+\lambda I)^{-1/2} B (A+\lambda I)^{-1/2}  \|  = \|B^{1/2} (A+\lambda I)^{-1} B^{1/2} \|\\
& = \| (A+\lambda I)^{-1/2} B^{1/2} \|^2 \leq  \| (A+\lambda I)^{-1} B \| = \| (\id^*C\id + \lambda I)^{-1} \id^* \id\| \\
& = \| \id^* (CT+\lambda)^{-1}\id\| 
 = \| \id^* T^{-1/2} (T^{1/2} C T^{1/2} + \lambda I)^{-1} T^{1/2} \id \| \\
& \stackrel{(*)}{=} \| \id^* T^{1/2} (\Lambda + \lambda I)^{-1} T^{-1/2} \id \id^* T^{-1/2} (\Lambda + \lambda I)^{-1} T^{1/2} \id   \|^{1/2} \\
& = \| \id^* T^{1/2} (T^{1/2} C T^{1/2} + \lambda I)^{-2} T^{1/2} \id  \|^{1/2}\\
& = \|   (T^{1/2} C T^{1/2} + \lambda I)^{-1} T^{1/2} T T^{1/2}  (T^{1/2} C T^{1/2} + \lambda I)^{-1} \|^{1/2} \\
& = \|   (T^{1/2} C T^{1/2} + \lambda I)^{-1}  T^2  (T^{1/2} C T^{1/2} + \lambda I)^{-1} \|^{1/2} \\
& = \| T (T^{1/2} C T^{1/2} + \lambda I)^{-2} T\|^{1/2},
\end{align*}
where $\Lambda:=T^{1/2} C T^{1/2}$ in $(*)$ and the only step with the inequality in the above sequence of calculations, follows from Cordes' inequality~\citep{cordes1987spectral}.  Hence, we have
\begin{align}\label{eq:term1bound}
\|B^{1/2} (A+\lambda I)^{-1/2}\| \leq  \| T (T^{1/2} C T^{1/2} + \lambda I)^{-2} T\|^{1/4}. 
\end{align}

\subsubsection*{Bounding \texttt{Term 2} and \texttt{Term 3}:}

First note that
\begin{align*}
\| ( A + \lambda I)^{1/2}   (\hat A + \lambda I)^{-1/2} \|^2 & = \| ( \hat A + \lambda I)^{-1/2} (A+\lambda I)   (\hat A + \lambda I)^{-1/2} \| \\
& = \| ( A + \lambda I)^{1/2}   (\hat A + \lambda I)^{-1} (A+\lambda)^{1/2}\|
 = \| (\hat A + \lambda I)^{-1/2}   ( A + \lambda I)^{1/2} \|^2,
\end{align*}
which implies that \texttt{Term 2}~$=$~\texttt{Term 3}. Next, note that
\begin{align*}
&\| ( A + \lambda I)^{1/2}   (\hat A + \lambda I)^{-1/2} \|^2  = \| ( A + \lambda I)^{1/2}   (\hat A + \lambda I)^{-1} (A+\lambda)^{1/2}\| \\
& = \left\| \left[ I - (A+\lambda I)^{-1/2} (A -\hat{A}) (A+\lambda I)^{-1/2} \right]^{-1}  \right\| 
 \leq \frac{1}{1-\left\| (A+\lambda I)^{-1/2} (A -\hat{A}) (A+\lambda I)^{-1/2} \right\|}.
\end{align*}
Define 
\begin{align*}
\Sigma & \coloneqq (A+\lambda I)^{-1/2} A (A+\lambda I)^{-1/2}
 =  (A+\lambda I)^{-1/2} \id^*C\id (A+\lambda I)^{-1/2} \\
& = \E\left[ (A+\lambda I)^{-1/2} \id^* (X \otimes X) \id (A+\lambda I)^{-1/2} \right],
\end{align*} 
and
\begin{align*}
\hat\Sigma & \coloneqq (A+\lambda I)^{-1/2} \hat A (A+\lambda I)^{-1/2}
=  (A+\lambda I)^{-1/2} \id^*\hat C\id (A+\lambda I)^{-1/2} \\
& = \frac{1}{n} \sum_{i=1}^n  (A+\lambda I)^{-1/2} \id^* (X_i \otimes X_i) \id (A+\lambda I)^{-1/2}.
\end{align*} 
This yields $\| (A+\lambda I)^{-1/2} (A -\hat{A}) (A+\lambda I)^{-1/2} \| =  \|\hat \Sigma -\Sigma\| $. Therefore by Theorem A.3, 
for any 
\begin{align}\label{eq:temp1}
n \geq ( r(\Sigma) \vee \tau) \quad \text{and} \quad \tau \geq1,
\end{align} with probability at least $1-e^{-\tau}$, we have
\begin{align}\label{Eq:bound}
 \|\hat \Sigma -\Sigma\|~ \leq ~K_1\| \Sigma\|~\frac{\sqrt{r(\Sigma)} + \sqrt{\tau}}{\sqrt{n}}\leq K_1\frac{\sqrt{r(\Sigma)} + \sqrt{\tau}}{\sqrt{n}},
\end{align}
where $K_1$ is a universal constant and we used $\Vert \Sigma\Vert=\Vert (A+\lambda I)^{-1/2} A (A+\lambda I)^{-1/2}\Vert\le 1$. The effective rank $r(\Sigma)$ satisfies
$$
r(\Sigma) \leq \frac{\trace(\Sigma)}{\| \Sigma\|} = \frac{\trace((A+\lambda I)^{-1/2} A (A+\lambda I)^{-1/2} )}{\| (A+\lambda I)^{-1/2} A (A+\lambda I)^{-1/2} \|}\\
= \frac{\trace((A+\lambda I)^{-1} A )}{\sup_i \left[\frac{\lambda_i(A)}{\lambda + \lambda_i(A)} \right]},
$$
with $\lambda_i(A)$ denoting the $i^{th}$ eigenvalue of operator $A$. Observe that
\begin{align*}
 \trace((A+\lambda I)^{-1} A) \leq \| (A+\lambda I)^{-1}\| \trace(A) \leq \frac{\trace(A)}{\lambda}.
\end{align*}
Furthermore,
\begin{align*}
\sup_i \left[\frac{\lambda_i(A)}{\lambda + \lambda_i(A)} \right] \geq \frac{\sup_i \lambda_i(A)}{\lambda + \|A \|} = \frac{\|A \|}{\lambda + \|A\|}.
\end{align*}
Hence,
\begin{align}\label{Eq:rSigma}
r(\Sigma)  \leq & \left( \frac{\lambda + \| A \|}{\| A \|}\right) \frac{\trace(A)}{\lambda}  \leq \left( 1+ \frac{\lambda }{\| A \|}\right) \frac{\trace(A)}{\lambda} 
= \left( \frac{1}{\lambda} + \frac{1}{\| A \|} \right) \trace(A)\leq \frac{2 \trace(A)}{\lambda},
\end{align}
where the last inequality holds since $\lambda \leq \| A \|$. Using \eqref{Eq:rSigma} in \eqref{Eq:bound}, we obtain that with probability at least $1-e^{-\tau}$,
\begin{align*}
\|\hat \Sigma -\Sigma\| \leq K_1 \left(\sqrt{\frac{2\trace(A)}{\lambda n}} + \sqrt{\frac{\tau}{n}}\right) \leq  \frac{1}{2}
\end{align*}
if 
\begin{align}\label{eq:temp2}
\lambda \geq \frac{32K^2_1 \trace(A)}{n}~\quad\text{and}\quad n \geq 16\tau K_1^2.
\end{align}
Combining~\eqref{eq:temp1} and~\eqref{eq:temp2}, we see that we require $\tau \geq 1$, $n\geq \tau$ and $n \geq r(\Sigma)$, which are satisfied as long as $n \geq 2\trace(A) /\lambda$ or equivalently $\lambda \geq 2\trace(A) /n$. Finally, note that we have 
\begin{align*}
\trace(A) = \trace(\id^* C\id) = \trace(CT) = \trace(T^{1/2} C T^{1/2}),
\end{align*}
and
\begin{align*}
\|A\| = \| \id^* C \id \| = \| C^{1/2} T C^{1/2} \| = \| T^{1/2} C T^{1/2} \|.
\end{align*}
Hence, as long as 
\begin{align}\label{eq:temp3}
(2\vee 32K_1^2) &\frac{\trace(T^{1/2} C T^{1/2})}{n} \leq \lambda \leq \| T^{1/2} C T^{1/2}\|, \nonumber\\
\delta \leq \frac{1}{e}\quad&\text{and}\quad n \geq (1\vee16 K_1^2) \ln\left(\frac{1}{\delta}\right),
\end{align}
we have with probability at least $1-\delta$, $\| \hat \Sigma - \Sigma \| \leq 1/2$. Hence under the conditions in~\eqref{eq:temp3}, 
\begin{align}\label{eq:term23bound}
\| (A+\lambda I)^{1/2} (\hat A + \lambda I)^{-1/2} \| \leq \sqrt{\frac{1}{1-\|\hat \Sigma - \Sigma\|}} \leq \sqrt{2}. 
\end{align}

\subsubsection*{Bounding \texttt{Term 4}}

Define 
$
Z_i := (A+\lambda I)^{-1/2} \left[ \id^* Y_i X_i - \id^* (X_i \otimes X_i) \tbeta \right]
$ so that
$$\E[Z_i] = (A+\lambda I)^{-1/2} \left[ \id^* \E[Y X] - \id^* C \tbeta \right] =0.$$ 
By Chebyshev's inequality for Hilbert-valued random variables (see Lemma A.2), 
with probability at least $1-\delta$, we have  
\begin{align}
\left\| (A+\lambda I)^{-1/2} \left[ \id^* \hat R - \id^* \hat C \tbeta \right] \right\|_\calH 
\leq  \sqrt{\frac{ \E\left[\left\| (A+\lambda I)^{-1/2} \left[ \id^* YX - \id^* (X \otimes X) \tbeta\right] \right\|^2_\calH\right]}{n\delta}},\label{Eq:Cheby}
\end{align}
where
\begin{align}
&\E\left[\left\| (A+\lambda I)^{-1/2} \left[ \id^* YX - \id^* (X \otimes X) \tbeta\right]\right\|^2_\calH\right] 
=  \E\left[\left\| (A+\lambda I)^{-1/2} \left[ \id^* (Y - \langle X,  \tbeta \rangle ) X \right] \right\|^2_\calH \right] \nonumber\\
&=  \E\left[(Y - \langle X,  \tbeta \rangle )^2 \left\| (A+\lambda I)^{-1/2}  \id^*  X  \right\|^2_\calH \right] \nonumber \\
&=  \E\left[(Y - \langle X,  \tbeta \rangle )^2 \left\langle  (A+\lambda I)^{-1/2} \id^* X, (A+\lambda I)^{-1/2} \id^* X\right\rangle_\calH \right] \nonumber \\
&=  \E\left[(Y - \langle X,  \tbeta \rangle )^2 \trace\left( (A+\lambda I)^{-1} \id^* (X\otimes X) \id \right) \right] \nonumber \\
&=  \E\left[\left(Y - g(\langle X,  \tbeta \rangle) + g(\langle X,  \tbeta \rangle) - \langle X,  \tbeta \rangle \right)^2\right.\nonumber\trace\left( (A+\lambda I)^{-1} \id^* (X\otimes X) \id \right) \Big] \nonumber \\
&\leq  2 \E\left[ \left\{ \epsilon^2 + ( g(\langle X,  \tbeta \rangle) - \langle X,  \tbeta \rangle )^2 \right\}\trace\left( (A+\lambda I)^{-1} \id^* (X\otimes X) \id \right) \right] \nonumber \\
& = 2 \E [\epsilon^2]\, \E\left[ \trace\left( (A+\lambda I)^{-1} \id^* (X\otimes X) \id \right) \right]\nonumber\\
&\qquad\qquad+ 2 \E\left[  ( g(\langle X,  \tbeta \rangle) - \langle X,  \tbeta \rangle )^2  \trace\left( (A+\lambda I)^{-1} \id^* (X\otimes X) \id \right) \right] \nonumber\\ 
 &=  2 \E [\epsilon^2] \trace\left( (A+\lambda I)^{-1} \id^* C \id \right)\nonumber\\
 &\qquad\qquad+ 2 \E\left[  ( g(\langle X,  \tbeta \rangle) - \langle X,  \tbeta \rangle )^2  \trace\left( (A+\lambda I)^{-1} \id^* (X\otimes X) \id \right) \right] \nonumber \\
&\leq  2 \sigma^2 \trace\left( (A+\lambda I)^{-1} A \right) + 2\sqrt{\varkappa} \sqrt{\E\left[ \trace^2 \left( (A+\lambda I)^{-1} \id^* (X\otimes X) \id \right) \right]},\label{eq:temp4} 
\end{align}
where we recall that $\varkappa$ is defined in~\eqref{eq:varkappa}. Recalling the definition of $N(\lambda)$ from~\eqref{eq:condition2}, we have 
\begin{align*}
\trace\left( (A+\lambda I)^{-1} A \right) 
&=\trace\left( (\id^*C\id +\lambda I)^{-1} \id^* C\id \right)  = \trace\left( \id^* (C\id\id^*+\lambda I)^{-1} C\id \right) \nonumber\\
 &= \trace\left( T(CT+\lambda I)^{-1} C\right) = \trace\left( CT(CT+\lambda I)^{-1} \right) \nonumber\\
&= \trace\left( CT^{1/2} (CT^{1/2} +\lambda T^{-1/2} )^{-1} \right) \nonumber\\
 &= \trace\left( T^{1/2} CT^{1/2} (T^{1/2} CT^{1/2} +\lambda I )^{-1} \right) = N(\lambda).
\end{align*}
Furthermore,
\begin{align*}
&\trace \left( (A+\lambda I)^{-1} \id^* (X\otimes X) \id \right)
= \trace \left( \id (A+\lambda I)^{-1} \id^* (X\otimes X) \right) \\
&= \trace \left( \id (\id^* C \id +\lambda I)^{-1} \id^* (X\otimes X) \right) 
= \trace \left( \id\id^* ( C \id \id^* +\lambda I)^{-1} (X\otimes X) \right) \\
&= \trace \left( T ( C T +\lambda I)^{-1} (X\otimes X) \right) 
= \trace \left( T^{1/2} ( T^{1/2}C T^{1/2} +\lambda I)^{-1} T^{1/2} (X\otimes X) \right) \\
&=  \left\langle X, T^{1/2} ( T^{1/2}C T^{1/2} +\lambda I)^{-1} T^{1/2} X \right\rangle.
\end{align*}
Therefore,
\begin{align}
&\E\left[\trace^2 \left( (A+\lambda I)^{-1} \id^* (X\otimes X) \id \right) \right]
 = \E \left[  \left\langle X, T^{1/2} ( T^{1/2}C T^{1/2} +\lambda I)^{-1} T^{1/2} X \right\rangle^2   \right]\nonumber \\
 &\stackrel{(*)}{\leq} 3 \trace^2 \left(T^{1/2} ( T^{1/2}C T^{1/2} +\lambda I)^{-1} T^{1/2} C  \right) 
 \leq 4 N^2(\lambda), \label{eq:temp5}
 \end{align}
 where $(*)$ follows from Lemma A.4. 
 Combining \eqref{eq:temp4} and \eqref{eq:temp5} in \eqref{Eq:Cheby}, we obtain with  probability at least $1-\delta$,
\begin{align}\label{eq:term4bound}
\left\| (A+\lambda I)^{-1/2} \left[ \id^* \hat R - \id^* \hat C \tbeta \right] \right\|_\calH \leq \sqrt{\frac{(2\sigma^2 + 4 \sqrt{\varkappa}) N(\lambda)}{n\delta}},
\end{align}
under the assumption that $\trace(C^{1/2}) < \infty$, as required by Lemma A.4.

\subsubsection*{Bounding \texttt{Term 5}}
Observe that
\begin{align*}
&\left\| (A+\lambda I)^{-1/2} \id^* (C-\hat C) (\id \beta_\lambda - \tbeta) \right\|_\calH \\
&=\| (\id^* C \id +\lambda I)^{-1/2} \id^* (C - \hat C) (\id \beta_\lambda - \tbeta) \|_\calH\\
&=\| (\id^* C \id +\lambda I)^{1/2}(\id^* C \id +\lambda I)^{-1} \id^* (C - \hat C) (\id \beta_\lambda - \tbeta) \|_\calH\\
&= \|(\id^* C \id +\lambda I)^{1/2}\id^* (CT+\lambda)^{-1} (C - \hat C) (\id\beta_\lambda -\tbeta)) \|_\calH\\
&= \left\| (\id^* C \id +\lambda I)^{1/2}\id^* (CT+\lambda I)^{-1} (C - \hat{C})\left(\id (\id^*C\id +\lambda I)^{-1} \id^* C \tbeta  - \tbeta\right)\right\|_\calH\\
&=\left\| (\id^* C \id +\lambda I)^{1/2}\id^* (CT+\lambda I)^{-1} (C - \hat{C}) \left((TC +\lambda I)^{-1}TC\tbeta - \tbeta \right) \right\|_\calH \\
&= \left\| (\id^* C \id +\lambda I)^{1/2}\id^* (CT+\lambda)^{-1}  (C -\hat{C}) (TC+\lambda I)^{-1}\left(TC\tbeta - (TC+\lambda I)\tbeta \right) \right\|_\calH \\
&= \lambda \left\| (\id^* C \id +\lambda I)^{1/2}\id^* (CT+\lambda I)^{-1} (C -\hat C) (TC+\lambda I)^{-1} \tbeta  \right\|_\calH  \\
&\le \lambda \left\| \id^* C \id +\lambda I\right\|^{1/2}\left\|\id^* (CT+\lambda I)^{-1} (C -\hat C) (TC+\lambda I)^{-1} \tbeta  \right\|  \\
&\le \lambda \left(\left\|T^{1/2}CT^{1/2}\right\|^{1/2}+\sqrt{\lambda}\right)\left\|  T^{1/2} (CT+\lambda)^{-1}  (C-\hat C) (TC+\lambda I)^{-1} \tbeta \right \| \\
&=\lambda  \left \| T^{1/2 -\alpha} T^\alpha (CT +\lambda I)^{-1}  (C-\hat C) (TC+\lambda I)^{-1} T^\alpha T^{-\alpha} \tbeta  \right\|\left(\left\|T^{1/2}CT^{1/2}\right\|^{1/2}+\sqrt{\lambda}\right)\\
&\leq  \lambda \| T\|^{1/2 -\alpha}  \|  T^\alpha (CT +\lambda I)^{-1}  (C-\hat C) (TC+\lambda I)^{-1} T^\alpha  \|  \| T^{-\alpha} \tbeta  \|\left(\left\|T^{1/2}CT^{1/2}\right\|^{1/2}+\sqrt{\lambda}\right),
 \end{align*}
 where $0<\alpha\le\frac{1}{2}$. Now, recalling the definition of $\Uptheta$ from~\eqref{eq:condition1}, and defining
\begin{align*}
 \hat\Uptheta &:=\frac{1}{n} \sum_{i=1}^ n T^\alpha (CT +\lambda I)^{-1}  (X_i\otimes X_i) (TC+\lambda I)^{-1} T^\alpha,
\end{align*}
we immediately have $\| T^\alpha (CT +\lambda I)^{-1}  (C-\hat C) (TC+\lambda I)^{-1} T^\alpha \|  = \| \hat\Uptheta- \Uptheta\| $. Hence, by Theorem A.3, 
we have with probability at least $1-e^{-\tau}$, 
\begin{align*}
 \| \hat\Uptheta- \Uptheta\|  \leq K_2 \| \Uptheta\| \frac{\sqrt{r(\Uptheta)} + \sqrt{\tau}}{\sqrt{n}},
\end{align*} 
for $\tau \geq 1$ and $n \geq (r(\Uptheta) \vee \tau)$, where $K_2$ is a universal constant. In other words, recalling the definition of $d(\lambda)$ from~\eqref{eq:condition1},  for $\delta \leq 1/e$, $n \geq d(\lambda)$, and $d(\lambda) \geq \ln(1/\delta)$, with probability at least $1-\delta$, we have 
\begin{align}\label{eq:term5condition}
&\| (A+\lambda I)^{-1/2} \id^* (C-\hat C) (\id \beta_\lambda - \tbeta) \|_\calH \nonumber\\  &\leq K_3 \left(\left\|T^{1/2}CT^{1/2}\right\|^{1/2}+\sqrt{\lambda}\right)   \|T\|^{1/2-\alpha} \|T^{-\alpha} \tbeta\|\lambda\| \Uptheta \|\sqrt{\frac{d(\lambda)}{n}}, 
\end{align}
for some universal constant $K_3$.

\subsubsection*{Bounding \texttt{Term 6}}
Note that
\begin{align}\label{eq:term6bound}
\| \id \beta_\lambda - \tbeta \|  =  \| (\id (\id^* C \id +\lambda I)^{-1} \id^* C \tbeta - \tbeta \| 
= \| T (CT+\lambda I)^{-1} C \tbeta - \tbeta \|.
\end{align}
The claim in Theorem~\ref{thm:masterthm}, immediately follows by combining~\eqref{eq:maindecomp},~\eqref{eq:term1bound}, \eqref{eq:term23bound},~\eqref{eq:term4bound},~\eqref{eq:term5condition} and~\eqref{eq:term6bound}.

\subsection{Proof of Theorem~\ref{thm:commutativeassumption}}\label{subsec:commutative1}
The proof of Theorem~\ref{thm:commutativeassumption} follows by carefully obtaining bounds on the individual terms in the inequality~\eqref{eq:masterbound} of the master theorem (Theorem~\ref{thm:masterthm}). 
Since $T$ and $C$ commute and have simple eigenvalues, they have the same eigenfunctions. 
Hence, recalling the definition of $\Xi$ in~\eqref{eq:condition2}, we have
\begin{align}
\| \Xi\| &=\left\Vert T (T^{1/2} C T^{1/2} + \lambda I)^{-2} T\right\Vert \nonumber\\
&= \sup_{i} \frac{\mu_i^2}{(\mu_i\xi_i+ \lambda)^2}\lesssim \sup_i \frac{i^{-2t}}{(i^{-(t+c) } +\lambda)^2}= \left[ \sup_i \frac{i^{-t}}{i^{-(t+c) } +\lambda)}\right]^2\nonumber\\
&\leq \left[ \lambda^{\tfrac{t -(t+c)}{t+c}} \right]^2 = \lambda^{-\tfrac{-2c}{t+c}},\label{eq:temp6}
\end{align}
where the last inequality follows from Lemma A.6. 
Next, recalling the definition of $N(\lambda)$ from~\eqref{eq:condition2} we have
\begin{align}
N(\lambda)&= \trace\left[(T^{1/2}CT^{1/2} +\lambda I)^{-1} T^{1/2} C T^{1/2} \right] \nonumber \\
&= \sum_{i} \frac{\mu_i\xi_i}{\mu_i\xi_i +\lambda} \lesssim \sum_{i} \frac{i^{-(t+c)}}{i^{-(t+c)} +\lambda}\lesssim \lambda^{-\tfrac{1}{t+c}}, \label{eq:temp7}
\end{align}
where the last inequality follows from Lemma A.5.
Next, recalling the definition of $\Uptheta$ in~\eqref{eq:condition1}, we have
\begin{align*}
\trace(\Uptheta)& = \trace\left({T^\alpha  (CT+\lambda I)^{-1} C (TC+\lambda I)^{-1} T^\alpha}\right)\\
&=\sum_i \frac{\mu_i^{2\alpha}\xi_i}{(\mu_i \xi_i +\lambda)^2} \lesssim \sum_i \frac{i^{-(2\alpha t +c)}}{(i^{-(t+c)} + \lambda)^2}\nonumber\\
&\lesssim \lambda^{-\tfrac{1+ (t+c)2 - (2\alpha t +c)}{t+c}} =  \lambda^{-\tfrac{1+ c + 2t (1-\alpha)}{t+c}},
\end{align*}
where the last inequality follows from Lemma A.5.
Next, we upper bound $\| \Uptheta\|$ as
\begin{align*}
\| \Uptheta \| &= \sup_i \frac{\mu_i^{2\alpha} ~\xi_i}{(\mu_i\xi_i + \lambda)^2} 
\lesssim  \sup_i \frac{i^{-(2\alpha t +c)}}{( i^{-(t+c)}+ \lambda)^2} = \left[ \sup_i \frac{i^{-(\alpha t +c/2)}}{i^{-(t+c)}+ \lambda}\right]^2  \\
&\leq \left[  \lambda^{\tfrac{\alpha t + c/2 - (t+c)}{t+c}}\right]^2  = \lambda^{\tfrac{2(\alpha -1)t - c}{t+c}},
\end{align*}
where the last inequality follows from Lemma A.6.
\textcolor{black}{We also bound $\| \Uptheta\|$ from below as}
\begin{align*}
\| \Uptheta \| = \sup_i \frac{\mu_i^{2\alpha}\xi_i}{(\mu_i \xi_i +\lambda)^2}  \geq  \sup_i \frac{\mu_i^{2\alpha}\xi_i}{(\| T^{1/2} C T^{1/2} \| +\lambda)^2}  =  \frac{ \| T^\alpha C T^\alpha\| }{(\| T^{1/2} C T^{1/2} \| +\lambda)^2}.
\end{align*}
Hence, we have
\begin{align}\label{eq:temp8}
\| \Uptheta \| \lesssim \lambda^{-\tfrac{2(1-\alpha)t + c}{t+c}}\quad \text{and}\quad \trace(\Uptheta) \lesssim  \lambda^{-\tfrac{1+ c + 2t (1-\alpha)}{t+c}},
\end{align}
and (recalling the definition of $d(\lambda)$ from~\eqref{eq:condition1})
\begin{align*}
d(\lambda) \leq \frac{\trace(\Uptheta)}{ \| T^\alpha C T^\alpha\|} (\| T^{1/2} C T^{1/2} \| +\lambda)^2 \lesssim 4 \| T^{1/2} C T^{1/2} \|^2 ~\lambda^{-\tfrac{1+c+2t(1-\alpha)}{t+c}}.
\end{align*}
Hence, the condition $n\gtrsim (d(\lambda) \vee \ln(1/\delta))$ is satisfied if $n \gtrsim \ln(1/\delta)$ and $n \gtrsim \lambda^{-\tfrac{1+c+2t(1-\alpha)}{t+c}} $ or equivalently $\lambda \gtrsim n^{-\tfrac{t+c}{1+c +2t(1-\alpha)}}$. Hence, the conditions on $n$ and $\lambda$ read as
\begin{align}\label{eq:temp9}
n \gtrsim \ln(1/\delta)~\quad\text{and}\quad n^{-\tfrac{t+c}{1+c +2t(1-\alpha)}} \lesssim \lambda \leq \| T^{1/2} C T^{1/2} \|. 
\end{align}
We now handle the bias term $\bias(\lambda)$. Denote by $(\psi_i)_{i\in\mathbb{N}}$, the eigenfunctions of the operator $T$. Recall that the assumption $\tbeta \in \range(T^\alpha)$, for $\alpha \in (0,1/2]$ implies that $\exists\, h \in \Ltwo$ such that $T^{\alpha} h =\tbeta$. Using this, we obtain
\begin{align}\label{eq:temp10}
&~\| T(CT + \lambda I)^{-1} C\tbeta -\tbeta \| = \| T(CT + \lambda I)^{-1} C T^\alpha h  - T^\alpha h  \| \\
=&~ \left[ \sum_i \left( \frac{\mu_i^{1+\alpha} \xi_i}{\mu_i\xi_i + \lambda} - \mu_i^\alpha \right)^2 \langle \psi_i, h \rangle^2 \right]^{1/2} 
=\left[ \sum_i \left( \frac{\lambda \mu_i^\alpha}{\mu_i\xi_i + \lambda} \right)^2 \langle \psi_i, h \rangle^2 \right]^{1/2}\nonumber \\
\leq&~ \lambda \sup_i \left[ \frac{\mu_i^\alpha}{\mu_i\xi_i +\lambda}\right] \|h\|\lesssim  \lambda \sup_i \left[ \frac{i^{-\alpha t}}{ i^{-(t+c)} + \lambda} \right] \| T^{-\alpha}\tbeta \|\nonumber \\
\lesssim&~ \lambda \lambda^{\tfrac{\alpha t - t- c}{t+c}}  \| T^{-\alpha}\tbeta \| =  \lambda^{\tfrac{\alpha t}{t+c}}  \| T^{-\alpha}\tbeta \|, \nonumber
\end{align}
where the last inequality follows from Lemma A.6. 
Combining~\eqref{eq:temp6}--\eqref{eq:temp10},
we obtain
\begin{align*}
&~\| \hat \beta - \tbeta \|\\
\lesssim&~ \lambda^{-\tfrac{2c}{4(t+c)}} \left[ \frac{\lambda^{-\tfrac{1}{2(t+c)}}}{\sqrt{n}} + \frac{\lambda(1+\sqrt{\lambda)} \lambda^{-\tfrac{2(1-\alpha)t+c}{2(t+c)}} \lambda^{-\tfrac{1+c+2t(1-\alpha)}{2(t+c)}}}{\sqrt{n}} \right] + \lambda^{\tfrac{\alpha t}{t+c}}\\
\lesssim&~ \lambda^{-\tfrac{2c}{4(t+c)}} \left[ \underbrace{\frac{\lambda^{-\tfrac{1}{2(t+c)}}}{\sqrt{n}}}_{p} + \underbrace{\frac{ \lambda^{-\tfrac{1+2t(1-2\alpha)}{2(t+c)}}}{\sqrt{n}}}_{q} \right] + \lambda^{\tfrac{\alpha t}{t+c}},
\end{align*} 
as $\sqrt{\lambda}=o(1)$ as $\lambda\rightarrow 0$.
Also $p = o(q)$ as $\lambda \to 0$. Therefore, we obtain 
\begin{align*}
\| \hat \beta - \tbeta \|& \lesssim \frac{\lambda^{-\tfrac{c}{2(t+c)}} \lambda^{-\tfrac{1+2t(1-2\alpha)}{2(t+c)}}} {\sqrt{n}} + \lambda^{\tfrac{\alpha t}{t+c}} = \frac{\lambda^{-\tfrac{1+c+2t(1-2\alpha)}{2(t+c)}}}{\sqrt{n}} +  \lambda^{\tfrac{\alpha t}{t+c}}.
\end{align*} 
Hence, by picking $\lambda$ as in~\eqref{eq:thm2claim} (which satisfies the condition on $\lambda$ in \eqref{eq:temp9}), we obtain the claim in~\eqref{eq:thm2claim}.

\subsection{Proof of Theorem~\ref{thm:tcteigendecay}}\label{subsec:noncommutative1}
We now prove Theorem~\ref{thm:tcteigendecay} by carefully upper bounding the terms in Theorem~\ref{thm:masterthm} under the assumptions of Theorem~\ref{thm:tcteigendecay}.
By recalling the definition of $\Xi$ from~\eqref{eq:condition2}, we have
\begin{align}\label{eq:temp11}
\| \Xi \|^{1/4} \le  \| T \|^{1/2} \left \| (T^{1/2} C T^{1/2} + \lambda I)^{-1} \right\|^{1/2} \lesssim 
\frac{1}{\sqrt{\lambda}}.
\end{align}
Next, recalling the definition of $N(\lambda)$ from~\eqref{eq:condition2}, we have
\begin{align}\label{eq:temp12}
N(\lambda) &= \trace\left[(T^{1/2}CT^{1/2} +\lambda I)^{-1} T^{1/2} C T^{1/2} \right]\nonumber \\
&=\sum_{i} \frac{\zeta_i}{\zeta_i +\lambda} \lesssim \sum_i \frac{i^{-b}}{i^{-b} + \lambda } \lesssim \lambda^{-\tfrac{1}{b}},
\end{align}
where the last inequality follows from Lemma A.5. 
Since $$\tbeta \in \range(T^{1/2} (T^{1/2} C T^{1/2})^\nu)\subset \range(T^{1/2}),$$ we have $\alpha=\frac{1}{2}$. 
Therefore, it follows from 
\eqref{eq:condition1} that
\begin{align}\label{eq:temp13}
\trace(\Uptheta)  & = \trace\left(  T^{1/2} (CT + \lambda I)^{-1} C (TC + \lambda I)^{-1} T^{1/2} \right) \nonumber \\
&\stackrel{(*)}{=}\trace\left(T^{1/2} T^{-1/2}  (\Lambda+\lambda I)^{-1} \Lambda  (\Lambda +\lambda I)^{-1} T^{-1/2} T^{1/2} \right)\nonumber\\
& = \trace\left(  T^{1/2} C T^{1/2} (T^{1/2} C T^{1/2} + \lambda )^{-2}\right) \nonumber \\
& = \sum_i \frac{\zeta_i}{(\zeta_i +\lambda )^2} \lesssim\sum_i \frac{i^{-b}}{ (i^{-b} + \lambda)^2} 
 \lesssim \lambda^{-\tfrac{1+b}{b}},
\end{align}
where the last inequality follows from Lemma A.5 
and $\Lambda:=T^{1/2}CT^{1/2}$ in $(*)$. Furthermore, we have the following upper bound on $\| \Uptheta \|$ as 
\begin{align}\label{eq:temp14}
\|\Uptheta \| & = \left\| T^{1/2} (CT+\lambda I)^{-1} C (TC+\lambda I)^{-1} T^{1/2}  \right\| \nonumber \\
& = \left\| (T^{1/2}C T^{1/2} +\lambda I)^{-1} (T^{1/2}C T^{1/2}) (T^{1/2}C T^{1/2} +\lambda I)^{-1} \right\|  \nonumber\\
& =\sup_i  \frac{\zeta_i}{(\zeta_i + \lambda)^2} 
 \lesssim \sup_i\ \frac{i^{-b}}{ (i^{-b} + \lambda)^2}  = \left[\sup_i \frac{i^{-b/2}}{i^{-b} + \lambda} \right]^2  \lesssim  \frac{1}{\lambda},
\end{align}
where the last inequality follows from Lemma A.6. 
We also have the following \textcolor{black}{lower bound} on $\| \Uptheta \|$ as
\begin{align}\label{eq:temp15}
\| \Uptheta \| = \sup_i \frac{\zeta_i}{(\zeta_i + \lambda)^2} \geq \frac{\| T^{1/2} C T^{1/2} \| }{(\| T^{1/2} C T^{1/2}\| + \lambda)^2}.
\end{align}
Hence, recalling the definition of $d(\lambda)$ from~\eqref{eq:condition1} and the fact that $\lambda \leq \| T^{1/2} C T^{1/2} \|$, we have
\begin{align*}
d(\lambda) \leq  \frac{\lambda^{-\tfrac{(1+b)}{b}}}{\| T^{1/2} C T^{1/2}\|} \left( \|  T^{1/2} C T^{1/2} \| + \lambda  \right)^2 \leq   4 \| T^{1/2} C T^{1/2}\| \lambda^{-\tfrac{1+b}{b}} \lesssim \lambda^{-\tfrac{1+b}{b}}.
\end{align*}
Therefore, the condition $ n \gtrsim (d(\lambda) \vee \ln(1/\delta))$ is satisfied if $n \gtrsim \ln(1/\delta)$ and $n \gtrsim \lambda^{-\tfrac{1+b}{b}}$ or equivalently $\lambda\gtrsim n^{-\tfrac{b}{1+b}}$. Hence, the conditions on $n$ and $\lambda$ read as
\begin{align*}
n \gtrsim \ln(1/\delta) \qquad\text{and}\qquad n^{-\tfrac{b}{1+b}}\lesssim \lambda\le \|T^{1/2} C T^{1/2} \|.
\end{align*}
Finally to handle the bias term, the assumption $\tbeta \in \range( T^{1/2} (T^{1/2} C T^{1/2})^\nu)$, for $\nu \in (0,1]$ implies that $\exists\, h \in \Ltwo$ such that $T^{1/2} (T^{1/2} C T^{1/2})^\nu h \in \tbeta$. Therefore, we have
\begin{align}\label{eq:temp16}
&~\| T(CT + \lambda I)^{-1} C\tbeta -\tbeta \| \nonumber \\
  \stackrel{(*)}{=} &~ \|  T^{1/2} (\Lambda + \lambda I)^{-1} \Lambda  \Lambda ^\nu h - T^{1/2}  \Lambda ^\nu h\| \nonumber \\
\leq &~\| T\|^{1/2} \left \| (T^{1/2}CT^{1/2} + \lambda I)^{-1} (T^{1/2}CT^{1/2})^{\nu+1} h -  ( T^{1/2} C T^{1/2} )^\nu h \right\| \nonumber \\
\leq &~ \| T\|^{1/2} \sup_i \left| \frac{\zeta_i^{1+\nu}}{\zeta_i +\lambda} - \zeta_i^\nu \right| \| h\| \lesssim \lambda \sup_i\left[ \frac{i^{-b\nu}}{i^{-b} +\lambda}\right] \lesssim \lambda \lambda^{\tfrac{b\nu -b}{b}}  =\lambda^\nu,
\end{align}
where $\Lambda:=T^{1/2}CT^{1/2}$ in $(*)$. By combining the bounds in~\eqref{eq:temp11}--\eqref{eq:temp16}, we obtain
\begin{align*}
\| \hat \beta - \tbeta\| \lesssim \frac{1}{\sqrt{\lambda}} \left[\frac{\lambda^{-\tfrac{1}{2b}}}{\sqrt{n}} + \frac{\lambda(1+\sqrt{\lambda}) \lambda^{-1/2} \lambda^{-\tfrac{1+b}{2b}}}{\sqrt{n}}\right] + \lambda^\nu \leq  \frac{\lambda^{-\left( \tfrac{1}{2} + \tfrac{1}{2b} \right)}}{\sqrt{n}} + \lambda^\nu. 
\end{align*}
Thus, by setting $\lambda$ as in~\eqref{eq:thm3claim}, we obtain the claim in~\eqref{eq:thm3claim}. 

\subsection{Proof of Theorem~\ref{thm:alignedeigensystem}}\label{subsec:eigalign}
We prove Theorem~\ref{thm:alignedeigensystem} by obtaining a better bound on $\bias(\lambda)$ under the assumptions in Theorem~\ref{thm:alignedeigensystem}. Define $\Lambda:=T^{1/2}CT^{1/2}$.  
The assumption that $\tbeta \in \range(T^{1/2} \Lambda^\nu)$ implies that $\exists\, h \in \Ltwo$ such that $T^{1/2} \Lambda^\nu h  =\tbeta$. Hence, we have
\begin{align*}
&\bias(\lambda)=\| T(CT+\lambda I)^{-1}C\tbeta -\tbeta \|= \left\| T^{1/2} (\Lambda+\lambda I)^{-1} \Lambda^{1+\nu} h-T^{1/2} \Lambda^\nu h\right\| \\
&= \left\|  T^{1/2} \sum_i \frac{\zeta_i^{1+\nu}}{\zeta_i + \lambda} \langle \phi_i, h\rangle \phi_i  - T^{1/2} \sum_i \zeta_i^\nu \langle \phi_i, h\rangle \phi_i \right \|
= \left\|  \sum_i\left(\frac{\zeta_i^{1+\nu}}{\zeta_i + \lambda} - \zeta_i^\nu \right) \langle \phi_i, h\rangle T^{1/2} \phi_i \right \|\\
&= \lambda \left\| \sum_i  \frac{\zeta_i^{\nu}}{\zeta_i + \lambda}\langle \phi_i, h\rangle T^{1/2} \phi_i  \right\| 
= \lambda \left\|  \sum_i \frac{\zeta_i^\nu}{\zeta_i+\lambda} \langle \phi_i, h \rangle \sum_j \sqrt{\mu_j} \langle \phi_i,\psi_j\rangle \psi_j \right\|\\
&= \lambda \left\|  \sum_j \sqrt{\mu_j} \left[\sum_i \frac{\zeta_i^\nu}{\zeta_i+\lambda} \langle \phi_i, h \rangle \langle \phi_i,\psi_j\rangle \right] \psi_j \right\|
= \lambda \left\{ \sum_j\mu_j \left[\sum_i \frac{\zeta_i^\nu}{\zeta_i+\lambda} \langle \phi_i, h \rangle \langle \phi_i,\psi_j\rangle \right]^2 \right\}^{1/2} \\
&= \lambda \left\{  \sum_i\sum_\ell \sum_j \mu_j \langle \phi_i,\psi_j \rangle \langle \phi_\ell,\psi_j \rangle \langle \phi_i,h \rangle \langle \phi_\ell,h \rangle \frac{\zeta_i^\nu}{\zeta_i+\lambda} \frac{\zeta_\ell^\nu}{\zeta_\ell+\lambda}\right\}^{1/2}\\
&= \lambda \left\{  \sum_i\sum_\ell \left[\frac{\zeta_i^\nu\zeta_\ell^\nu} {(\zeta_i+\lambda)(\zeta_\ell+\lambda)}  \sum_j \mu_j  \langle \phi_i,\psi_j \rangle \langle \phi_\ell,\psi_j \rangle \right] \langle \phi_i,h \rangle \langle \phi_\ell,h \rangle  \right\}^{1/2}\\
&\leq  \lambda \left\{\left\{\sum_i\sum_\ell \left[\frac{\zeta_i^\nu\zeta_\ell^\nu} {(\zeta_i+\lambda)(\zeta_\ell+\lambda)}  \right]^2 \left[ \sum_j \mu_j  \langle \phi_i,\psi_j \rangle \langle \phi_\ell,\psi_j \rangle \right]^2 \right\}^{1/2}\left\{ \sum_i \sum_\ell \langle \phi_i,h \rangle^2 \langle \phi_\ell,h \rangle^2\right\}^{1/2}   \right\}^{1/2}\\
&\leq  \lambda \|h\| \left\{  \sum_{i,\ell} \left[\frac{\zeta_i^\nu\zeta_\ell^\nu} {(\zeta_i+\lambda)(\zeta_\ell+\lambda)}  \right]^2 \left[ \sum_j \mu_j  \langle \phi_i,\psi_j \rangle \langle \phi_\ell,\psi_j \rangle \right]^2   \right\}^{1/4} \\
&= \lambda \| h \| \left\{  \sum_{i,\ell} \left[\frac{\zeta_i^\nu\zeta_\ell^\nu \sqrt{\mu_i \mu_\ell}} {(\zeta_i+\lambda)(\zeta_\ell+\lambda)}  \right]^2   \left[ \sum_j \frac{\mu_j}{\sqrt{\mu_i \mu_\ell}}  \langle \phi_i,\psi_j \rangle \langle \phi_\ell,\psi_j \rangle \right]^2  \right\}^{1/4}\\
&\le \lambda \| h \|  \left[  \sum_{i,\ell} \left[\frac{\zeta_i^\nu\zeta_\ell^\nu \sqrt{\mu_i \mu_\ell}} {(\zeta_i+\lambda)(\zeta_\ell+\lambda)}  \right]^2   \right]^{1/4} \left\{ \sup_{i,l}\left[ \sum_j \frac{\mu_j}{\sqrt{\mu_i \mu_\ell}}  \langle \phi_i,\psi_j \rangle \langle \phi_\ell,\psi_j \rangle \right]^2  \right\}^{1/4}\\
&= \lambda \|h\|  \left[  \sum_{i} \left(\frac{\zeta_i^\nu  \sqrt{\mu_i}} {\zeta_i+\lambda}\right)^2 \right]^{1/2} \left\{ \sup_{i,\ell} \frac{1}{\mu_i\mu_\ell} \left| \sum_j \mu_j \langle \phi_i, \psi_j \rangle \langle \phi_\ell, \psi_j\rangle \right|^2 \right\}^{1/4} \\
&\stackrel{(*)}{\lesssim} \lambda \|h\| \left[  \sum_{i} \left(\frac{\zeta_i^\nu  \sqrt{\mu_i}} {\zeta_i+\lambda}\right)^2 \right]^{1/2}\lesssim\lambda\Vert h\Vert \left[\sum_i \frac{i^{-(2b\nu+t)}}{(i^{-b}+\lambda)^2}\right]^{1/2}\\
&\stackrel{(**)}{\lesssim} \lambda\cdot \lambda^{-(1+2b-2b\nu-t)/2b} \Vert h\Vert,
\end{align*}
where $(*)$ follows from the assumption in \eqref{eq:boundedDcondition} and $(**)$ from Lemma A.5 
when $b \geq 2b\nu +t$ and $2b\geq 2b\nu+t$, i.e., $\nu \leq \frac{1}{2} - \frac{t}{2b}$. Therefore,
\begin{align}\label{eq:temp17}
\bias(\lambda) = \| T(CT+\lambda)^{-1} C\tbeta - \tbeta\|  \lesssim \lambda^{\tfrac{2b\nu+t-1}{2b}}   ~\|h\|.
\end{align} 
Combining the bounds in \eqref{eq:temp11}--\eqref{eq:temp15} from the proof of Theorem~\ref{thm:tcteigendecay} and~\eqref{eq:temp17}, we obtain
\begin{align*}
\| \hat\beta - \tbeta \| \lesssim \frac{\lambda^{-\left( \frac{1}{2} + \frac{1}{2b}   \right)}}{\sqrt{n}} +\underbrace{\lambda^{\tfrac{2b\nu+t-1}{2b}}}_{p}.
\end{align*}
Note that we have $p = o(\lambda^\nu)$ as $\lambda \to 0$. Hence, by our choice of $\lambda$, the result follows.
\subsection{Proof of Proposition~\ref{prop:char}}\label{subsec:range}
First note that $k$ and $c$ are positive definite kernels which follow from the form of $k$ and $c$ and the assumption that $a_i\ge 0$ for all $i$ and $b_m\ge 0$ for all $m$. Next, we note that we $k(\cdot, x) = \sum_{i} a_i \phi_i(x) \phi_i(\cdot) \in \calH$, and
\begin{align*}
    \langle f, k(\cdot, x)\rangle_{\calH} = \sum_i \frac{f_i a_i}{a_i} \phi_i(x) = f(x),~~~ \forall x \in [0,1],
\end{align*}
implying $\calH$ is an RKHS induced by the kernel $k$.

By considering the integral operator $Tf = \int_0^1 k(\cdot, x) f(x) dx$, for $f \in L^2([0,1])$, we have
\begin{align*}
    Tf = \int_0^1 \sum_i a_i \phi_i(x) f(x) \phi_i dx=\sum_i a_i \left[ \int_0^1 \phi_i(x) f(x) d(x) \right] \phi_i
    =\sum_i a_i \langle f, \phi_i\rangle \phi_i.
\end{align*}
This implies $T \phi_j = \sum_i a_i \langle \phi_j, \phi_i \rangle \phi_i= \sum_i a_i \delta_{ij} \phi_i = a_j \phi_j$, i.e., $(a_i, \phi_i)_{i \in \mathbb{N}}$ form the pair of eigenvalues and eigenfunctions  of the operator $T$. Since  $\sum_i a_i < \infty$ and $a_i \geq 0$, $T$ is also a positive, trace-class operator.

Since $X$ is a mean-zero Gaussian process with covariance function $$c(x,y) = \sum_m b_m \psi_m(x) \psi_m(y),$$ it follows from the Karhunen-Lo\'{e}ve theorem that $X$ has a representation of the form $$X=\sum_m \sqrt{b_m} z_m \psi_m,$$ where $z_m \sim N(0,1)$. Furthermore, we have 
\begin{align*}
    C&=\E[X \otimes X] = \E\left[\sum_{m,s} \sqrt{b_m}\sqrt{b_s} z_m z_s \psi_m\otimes \psi_s\right] 
    =\sum_{m,s} \sqrt{b_m}\sqrt{b_s} \E[z_mz_s]\psi_m\otimes \psi_s\\
     &=\sum_{m,s} \sqrt{b_m}\sqrt{b_s} \delta_{ms}\psi_m\otimes \psi_s=\sum_m b_m \psi_m\otimes \psi_m.
\end{align*}
This implies that $C\psi_\ell=\sum_m b_m \psi_m \langle \psi_m, \psi_\ell\rangle = b_\ell\psi_\ell$. Since $b_m \geq 0$ and $\sum_m b_m < \infty$, $C$ is also a positive, trace-class operator with $(b_i, \psi_i)_{i \in \mathbb{N}}$ as eigenvalue-eigenfunction pairs. 

We next characterize the space $\mathscr{R}(T^{1/2} (T^{1/2}CT^{1/2}))$. Note that by functional calculus, we have $T^{1/2} = \sum_i \sqrt{a_i} \phi_i\otimes \phi_i$. Hence,
\begin{align*}
    CT^{1/2} =\sum_{m,i} b_m\sqrt{a_i} \langle \psi_m,\phi_i\rangle \psi_m\otimes \phi_i = \sum_{m,i} b_m\sqrt{a_i} \theta_{mi} \psi_m\otimes \phi_i
\end{align*}
and
\begin{align}
    T^{1/2}CT^{1/2} &= \left[ \sum_j \sqrt{a_j} \phi_j \otimes \phi_j \right]\left[ \sum_{m,i} b_m\sqrt{a_i} \theta_{mi} \psi_m\otimes \phi_i\right]\nonumber\\
    &=\sum_{i,j,m} \sqrt{a_i a_j} b_m \theta_{mi} \theta_{mj} \phi_j \otimes \phi_i\nonumber\\
    &=\sum_{i,j}\sqrt{a_i a_j} \left[ \sum_m b_m \theta_{mi} \theta_{mj} \right] \phi_j \otimes \phi_i\label{eq:tctoperator}\\
    &=\sum_{ij}\sqrt{a_i a_j} \eta_{ij} \phi_j \otimes \phi_i.\nonumber
\end{align}
By a similar calculation, we obtain 
\begin{align*}
    T^{1/2}(T^{1/2}CT^{1/2}) = \sum_{ij} a_j \sqrt{a_i}\eta_{ij} \phi_j\otimes \phi_i.
\end{align*}
Since
\begin{align*}
\mathscr{R}(T^{1/2} (T^{1/2}CT^{1/2}))
= \left\{ \tilde{f} \in L^2([0,1])|\tilde{f}= T^{1/2} (T^{1/2}CT^{1/2}) h: h \in L^2([0,1]) \right\},
\end{align*}
for any function $\tilde{f} \in \mathscr{R}(T^{1/2} (T^{1/2}CT^{1/2}))$, we have $\tilde{f} = T^{1/2} (T^{1/2}CT^{1/2}) h$ for some $h\in L^2([0,1])$. By defining $h_i\coloneqq\langle h,\phi_i\rangle$ and $\beta_j\coloneqq \sum_i \sqrt{a_i} \eta_{ij}h_i$, we have
\begin{align*}
    \tilde{f} =\sum_{ij} a_j \sqrt{a_i} \eta_{ij} h_i \phi_j = \sum_ia_j\beta_j\phi_j.
\end{align*}
We now show that $\tilde{f} \in \mathcal{H}$. Consider $\|\tilde{f}\|^2_{\calH}=\sum_j a_j \beta_j^2 \leq \|h\|^2 \sum_{j} a_j \sum_i a_i\eta_{ij}^2$, where
\begin{align*}
\eta_{ij}^2& = \left(\sum_m b_m \theta_{mi} \theta_{mj}\right)^2\leq \sum_m b_m^2 \theta^2_{mi} \sum_m \theta^2_{mj} = \|\phi_j\|^2 \sum_m b_m^2 \theta^2_{mi} \\
&=\sum_m b_m^2 \theta^2_{mi} \leq \sum_m b_m^2 \leq \sum_m b_m.
\end{align*}
Hence, $\|\tilde{f}\|^2_{\calH} \leq \|h\|^2 \left(\sum_m b_m\right) (\sum_i a_i)^2 < \infty$, which implies that we have $\mathscr{R}(T^{1/2} (T^{1/2}CT^{1/2})) \subset \calH$.

Next, we show that $\mathscr{R}(T^{1/2} (T^{1/2}CT^{1/2})) \subset \tilde{\mathcal{H}}$. To this end, note that for any function $\tilde{f} \in \mathscr{R}(T^{1/2} (T^{1/2}CT^{1/2}))$, we have that $\tilde{f} = \sum_j a_j \beta_j \phi_j$, and by a similar calculation as above we have that  $\|\tilde{f}\|^2_{\tilde{\mathcal{H}}} < \infty$ where we used $\beta^2_j\le \Vert h\Vert^2 \tau_j$. Finally, since 
\[
\sum_i \frac{f_i^2}{a_i} = \sum_i \frac{f_i^2\tau_i}{a_i\tau_i} \leq \sup_i \tau_i \sum_i \frac{f_i^2}{a_i \tau_i} < \infty.
\]
it also follows that $\tilde{\mathcal{H}} \subset \mathcal{H}$.

\subsection{Proof of Theorem~\ref{thm:predictionmasterthm}}\label{subsec:main-pred}
Note that
\begin{align*}
\| C^{1/2} (\hat \beta -\beta^*)\| 
&\leq \|C^{1/2}(\id\hat \beta -\id\beta_\lambda)\|+\|C^{1/2} (\id\beta_\lambda-\beta^*)\|\\
&= \| (\id^*C\id)^{1/2} (\hat\beta - \beta_\lambda) \|_\calH +  \| C^{1/2} (\id \beta_\lambda -\beta^*) \|.
\end{align*}
By defining $A:=\id^*C\id$ and following the steps similar to the proof of Theorem~\ref{thm:masterthm} in bounding the first term, we  obtain
\begin{align*}
\| C^{1/2} (\hat \beta -\tbeta)\|^2 &\lesssim   \underbrace{\| A^{1/2}(A + \lambda I)^{-1/2} \|^2}_{\texttt{Term 7}}\cdot \left( \texttt{Term 2} \right)^2\cdot \left(  \texttt{Term 3} \right)^2\\  &\qquad\cdot \left[ \texttt{Term 4}  +  \texttt{Term 5} \right]^2 + \underbrace{ \| C^{1/2} (\id\beta_\lambda -\beta^*)\|^2}_{\texttt{Term 8}}. 
\end{align*}

We will now proceed with bounding \texttt{Term 7} and \texttt{Term 8}.
For \texttt{Term 7}, note that 
\begin{align*}
\| A^{1/2}(A + \lambda I)^{-1/2} \| \leq 1.
\end{align*}
To bound \texttt{Term 8}, note that 
\begin{align*}
\|C^{1/2} (\id \beta_\lambda - \beta^*) \| & = \| C^{1/2} \id (\id^* C \id +\lambda I)^{-1} \id^* C \beta^* - C^{1/2} \beta^*\| \\
&= \| C^{1/2} T (CT+\lambda I)^{-1} C\beta^* - C^{1/2} \beta^* \|.
\end{align*}
The result therefore follows by combining the bounds on \texttt{Term 7} and \texttt{Term 8}, along with the bounds for \texttt{Term 2} to \texttt{Term 5} from the proof of Theorem~\ref{thm:masterthm}.

\subsection{Proof of Theorem~\ref{thm:predictionnoncommutativeassumption}}\label{subsec:commutative2}
We first deal with the bias term. Since $\beta^* \in \range(T^{1/2})$, $\exists\, h \in \Ltwo$ such that $\beta^* = T^{1/2} h$.  Therefore, we have
\begin{align*}
&~~~~\| C^{1/2} T (CT +\lambda I)^{-1} C\beta^* - C^{1/2}\beta^* \|  \\ & = \| C^{1/2} T^{1/2} (T^{1/2}CT^{1/2} +\lambda I)^{-1} T^{1/2} C T^{1/2} h - C^{1/2} T^{1/2} h  \| \\
& = \left\| C^{1/2} T^{1/2} \left[ (T^{1/2} C T^{1/2} +\lambda I)^{-1} T^{1/2} C T^{1/2} h - h \right] \right\|\\
& = \|(T^{1/2} C T^{1/2})^{1/2} (T^{1/2} C T^{1/2} + \lambda I)^{-1} T^{1/2} C T^{1/2} h - (T^{1/2} C T^{1/2})^{1/2} h \|\\
& = \left[  \sum_i \left( \frac{\zeta_i^{3/2}}{\zeta_i +\lambda} - \zeta_i^{1/2} \right)^2 \langle \phi_i, h \rangle^2 \right]^{1/2} \leq \lambda\|h \| \sup_i \frac{\zeta^{1/2}_i}{\zeta_i +\lambda}\\
&\lesssim \lambda \|h \| \sup_i \frac{i^{-\tfrac{b}{2}}}{i^{-b} + \lambda } \lesssim \lambda \| h \| \lambda^{-1/2} = \sqrt{\lambda} \|h \|,
\end{align*}
where the last inequality follows from Lemma A.6. 
Since $\alpha =1/2$, by using~\eqref{eq:temp12}, \eqref{eq:temp13} and~\eqref{eq:temp14} respectively for bounding $N(\lambda)$, $\trace(\Uptheta)$ and $\| \Uptheta\|$, along with the above bound on the bias, we obtain
\begin{align*}
\| C^{1/2} (\hat \beta -\beta^*) \|^2 \lesssim \frac{\lambda^{-\tfrac{1}{b}}}{n} +\lambda.
\end{align*}
Hence, by setting $\lambda$ as in~\eqref{thm7claim}, we obtain the claim in~\eqref{thm7claim}. 

\subsection{Proof of Theorem~\ref{thm:predictioncommutativeassumption}}\label{subsec:noncommutative2}
We first bound the bias term. Since $\beta^* \in \range(T^\alpha)$, $\exists\, h \in \Ltwo$ such that $\beta^* = T^\alpha h$. Therefore, we have
\begin{align*}
&\|C^{1/2} T (CT +\lambda I)^{-1} C\beta^* - C^{1/2}\beta^* \|\\
&=\| C^{1/2} T (CT +\lambda I)^{-1} C T^\alpha h - C^{1/2} T^\alpha h \| \\
&= \| C^{1/2} T^{1/2} (T^{1/2}CT^{1/2} +\lambda I)^{-1} T^{1/2} C T^\alpha h - C^{1/2} T^\alpha h \| \\
&= \left[\sum_i \left(  \frac{\mu_i^{1+\alpha} \xi_i^{3/2}}{\mu_i\xi_i + \lambda}  -\xi^{1/2}_i \mu_i^\alpha  \right)^2 \langle\phi_i, h\rangle^2   \right]^{1/2} \\
&\leq \lambda \| h\| \sup_i \frac{\xi_i^{1/2} \mu_i^\alpha}{\mu_i \xi_i +\lambda}\lesssim \lambda \| h \| \sup_i \frac{i^{-(\alpha t +c/2)}}{i^{-(t+c)} +\lambda}\\
&\lesssim \lambda \|h\| \lambda^{\tfrac{\alpha t + c/2 -t -c}{t+c}} = \lambda^{\tfrac{\alpha t +c/2}{t+c}} \| h \|,
\end{align*}
where the last inequality follows from Lemma A.6. 
This upper bound on the bias, along with~\eqref{eq:temp7} and~\eqref{eq:temp8} from the proof of Theorem~\ref{thm:commutativeassumption} implies that
\begin{align*}
\| C^{1/2} (\hat \beta - \beta^*) \|^2 \lesssim \frac{\lambda^{-\tfrac{1+2t(1-2\alpha)}{t+c}}}{n} + \lambda^{\tfrac{2\alpha t+c}{t+c}}.
\end{align*} 
Thus by setting $\lambda$ as in~\eqref{eq:thm2claim}, we obtain the claim in~\eqref{eq:thm6claim}. 

\section*{Acknowledgements} 
KB is supported in part by the National Science Foundation (NSF) grant DMS-2053918 and a UC Davis Center for Data Science and Artificial Intelligence Research (CEDAR) seed grant. HGM is supported in part by NSF grant DMS-2014526 and NIH grant UG3-0D023313. BKS is supported in part by the NSF CAREER Award DMS-1945396.

\references

\appendix

\section{Auxiliary results} \label{sec:auxappendix}

In this section, we collect some technical results used to prove the main results. 

\begin{theorem}[\citealp{shih2011stein, kuo2011integration}] Let $H$ be a separable Hilbert space, with norm $\|\cdot\|_H$ and inner-product $\langle \cdot, \cdot \rangle_H$. Let $\tilde{H}$ be the completion with respect to $\|\cdot\|_H$. Let $p$ be a Gaussian measure on $\tilde{H}$. Then, for any $h\in H$ and for any once Fr\'echet-differentiable function $f:\tilde{H} \to \mathbb{R}$, we have $\int_{\tilde{H}} \langle h,x\rangle_H \, dp(x) = \int_{\tilde{H}} \langle\nabla f(x),h\rangle_H\, dp(x)$, where $\nabla$ represents the Fr\'echet derivative, as long as the expectations are well-defined.
\end{theorem}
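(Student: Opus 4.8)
The plan is to obtain this infinite-dimensional integration-by-parts formula — read as $\int_{\tilde{H}}\langle h,x\rangle_H\,f(x)\,dp(x)=\int_{\tilde{H}}\langle\nabla f(x),h\rangle_H\,dp(x)$ — from a one-parameter differentiation argument built on the Cameron--Martin theorem. Fix $h\in H$ and set $\varphi(t):=\int_{\tilde{H}}f(x+th)\,dp(x)$ for $t\in\mathbb{R}$. Since $p$ is a Gaussian measure with Cameron--Martin space $H$, the Cameron--Martin theorem asserts that the translate $p(\,\cdot\,-th)$ is absolutely continuous with respect to $p$ with density $x\mapsto\exp\!\big(t\,\ell_h(x)-\tfrac{t^2}{2}\|h\|_H^2\big)$, where $\ell_h$ is the Paley--Wiener integral attached to $h$, i.e.\ the $p$-measurable linear functional on $\tilde{H}$ that extends $x\mapsto\langle h,x\rangle_H$ off $H$. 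A change of variables then gives $\varphi(t)=\int_{\tilde{H}}f(x)\exp\!\big(t\,\ell_h(x)-\tfrac{t^2}{2}\|h\|_H^2\big)\,dp(x)$.

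Next I would compute $\varphi'(0)$ from each representation. From $\varphi(t)=\int f(x+th)\,dp(x)$, Fr\'echet differentiability gives the pointwise limit $t^{-1}\big(f(x+th)-f(x)\big)\to\langle\nabla f(x),h\rangle_H$ as $t\to0$, and a dominated-convergence argument for the difference quotients yields $\varphi'(0)=\int_{\tilde{H}}\langle\nabla f(x),h\rangle_H\,dp(x)$. From the second representation, differentiating under the integral sign and using $\tfrac{d}{dt}\exp\!\big(t\,\ell_h(x)-\tfrac{t^2}{2}\|h\|_H^2\big)\big|_{t=0}=\ell_h(x)$ yields $\varphi'(0)=\int_{\tilde{H}}\ell_h(x)\,f(x)\,dp(x)$. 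Equating the two values of $\varphi'(0)$ and recalling that $\ell_h(x)=\langle h,x\rangle_H$ for $x\in H$ (and is its canonical a.e.\ extension on $\tilde{H}$) gives the claim. An equivalent but more hands-on route is to prove the formula first for cylinder functions $f(x)=F\big(\langle e_1,x\rangle_H,\dots,\langle e_k,x\rangle_H\big)$ with $(e_j)_{j\le k}$ orthonormal in $H$ and $F\in C_b^1(\mathbb{R}^k)$ — where, after expanding $\ell_h$ along the $e_j$, it collapses to the classical finite-dimensional Gaussian identity $\int_{\mathbb{R}^k}z_jF(z)\,\gamma_k(dz)=\int_{\mathbb{R}^k}\partial_jF(z)\,\gamma_k(dz)$ for the standard Gaussian $\gamma_k$ — and then to extend to general $f$ by approximation.

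The main obstacle is the analytic bookkeeping needed to differentiate under the two integral signs. For the first representation one must dominate $\sup_{0<|t|\le\varepsilon}|t|^{-1}\big|f(x+th)-f(x)\big|$ by a $p$-integrable function, which is exactly where a growth/integrability hypothesis on $\nabla f$ is used — this is the content of the clause ``as long as the expectations are well-defined''. For the second representation one invokes that $\ell_h$ is a centered Gaussian under $p$ with variance $\|h\|_H^2$, so $\ell_h f\in L^1(p)$ whenever $f$ has a suitable moment, together with a local domination of $t\mapsto\exp\!\big(t\,\ell_h(x)-\tfrac{t^2}{2}\|h\|_H^2\big)$ and its $t$-derivative. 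A secondary point that should be made explicit is the interpretation of $\langle h,x\rangle_H$ at points $x\in\tilde{H}\setminus H$ as the Paley--Wiener extension $\ell_h$; once this is in place, the change of variables, Fubini, and the finite-dimensional base case are all routine.
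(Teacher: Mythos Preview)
The paper does not supply its own proof of this theorem: it is stated in the appendix as a cited auxiliary result from \cite{shih2011stein, kuo2011integration}, so there is no in-paper argument to compare against. Your Cameron--Martin differentiation approach is a standard and correct route to the infinite-dimensional Gaussian integration-by-parts formula, and you have correctly identified both the likely missing factor $f(x)$ on the left-hand side of the displayed identity and the key technical points (domination for differentiating under the integral, and the Paley--Wiener interpretation of $\langle h,x\rangle_H$ off $H$).
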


\begin{lemma}[Chebychev's inequality for Hilbert-valued random variables]\label{lem:chebychev}
Let $Z_i\in H$, for $i=1, \ldots, n$ be i.i.d.~Hilbert-valued random variables such that $\E[Z_i] =0$. Then 
\begin{align*}
\mathbb{P}\left( \left\| \frac{1}{n} \sum_{i=1}^n Z_i \right\|_H \geq \sqrt{\frac{\E\| Z_1\|_\calH^2}{n\delta}}\right) \leq \delta.
\end{align*} 
\end{lemma}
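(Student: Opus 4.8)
The plan is to reduce the statement to the classical scalar Markov inequality applied to the non-negative random variable $\left\| \frac1n \sum_{i=1}^n Z_i \right\|_H^2$, after first computing its mean. Write $S_n \coloneqq \frac1n \sum_{i=1}^n Z_i$. If $\E\|Z_1\|_H^2 = \infty$ the asserted bound is vacuous, so we may assume $\E\|Z_1\|_H^2 < \infty$; then Cauchy--Schwarz gives $\E\|Z_i\|_H \le (\E\|Z_i\|_H^2)^{1/2} < \infty$, so each $Z_i$ is Bochner integrable, $\E[Z_i] = 0$ is meaningful, and $\E\langle Z_i, y\rangle_H = \langle \E[Z_i], y\rangle_H = 0$ for every $y \in H$.

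First I would expand the squared norm through the inner product and take expectations term by term over the finite double sum:
\[
\E\|S_n\|_H^2 = \frac{1}{n^2}\sum_{i=1}^n\sum_{j=1}^n \E\langle Z_i, Z_j\rangle_H,
\]
the interchange being justified since $\E|\langle Z_i, Z_j\rangle_H| \le \E[\|Z_i\|_H\|Z_j\|_H] \le \E\|Z_1\|_H^2 < \infty$. For $i \ne j$, independence of $Z_i$ and $Z_j$ together with the mean-zero property give $\E\langle Z_i, Z_j\rangle_H = 0$ (condition on $Z_i$ and apply the displayed identity with $y = Z_i$). Hence only the diagonal terms survive, and since the $Z_i$ are identically distributed,
\[
\E\|S_n\|_H^2 = \frac{1}{n^2}\sum_{i=1}^n \E\|Z_i\|_H^2 = \frac{\E\|Z_1\|_H^2}{n}.
\]

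Finally I would apply Markov's inequality to the non-negative variable $\|S_n\|_H^2$: for any $t > 0$,
\[
\mathbb{P}\left( \|S_n\|_H \ge t \right) = \mathbb{P}\left( \|S_n\|_H^2 \ge t^2 \right) \le \frac{\E\|S_n\|_H^2}{t^2} = \frac{\E\|Z_1\|_H^2}{n t^2},
\]
and choosing $t = \sqrt{\E\|Z_1\|_H^2/(n\delta)}$ makes the right-hand side exactly $\delta$, which is the claim. There is no genuine obstacle here; the only point requiring a modicum of care is the interchange of expectation with the inner product for Hilbert-space-valued random elements — that is, $\E\langle Z_i, Z_j\rangle_H = \langle \E Z_i, \E Z_j\rangle_H$ for independent Bochner-integrable $Z_i, Z_j$ — which is a standard fact once Bochner integrability has been established, and may be invoked without further elaboration.
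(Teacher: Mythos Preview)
Your proposal is correct and follows essentially the same route as the paper: expand $\E\Vert S_n\Vert_H^2$ via the inner product, use independence and the mean-zero assumption to kill the off-diagonal terms, and then apply Markov's inequality with the threshold $t=\sqrt{\E\Vert Z_1\Vert_H^2/(n\delta)}$. Your write-up is in fact slightly more careful than the paper's (you explicitly treat the case $\E\Vert Z_1\Vert_H^2=\infty$ and justify Bochner integrability and the interchange of expectation with the inner product), but there is no substantive difference in approach.
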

\begin{proof}
By Markov's inequality, it is obvious that for any $\epsilon>0$  $$\mathbb{P}\left( \left\| \frac{1}{n} \sum_{i=1}^n Z_i \right\|_H \geq \epsilon\right) \leq \frac{\E\left\Vert\frac{1}{n}\sum^n_{i=1}Z_i\right\Vert^2_H}{\epsilon^2}.$$
By noting $$\E\left\Vert\frac{1}{n}\sum^n_{i=1}Z_i\right\Vert^2_H=\frac{1}{n^2}\sum^n_{i,j=1}\E\langle Z_i,Z_j\rangle_H=\frac{1}{n^2}\sum^n_{i=1}\E\left\Vert Z_i\right\Vert^2_H+\frac{1}{n^2}\sum^n_{i\ne j}\E\langle Z_i,Z_j\rangle_H=\frac{\E\| Z_1\|_H^2}{n}$$ and choosing $\epsilon=\sqrt{\frac{\E\| Z_1\|_H^2}{n\delta}}$ yields the result.
\end{proof}
\begin{theorem}[\citealp{Koltchinskii-17}]\label{Thm:kL}
Let $X_1,\ldots,X_n$ be i.i.d.~centered Gaussian random variables in a separable Hilbert space $H$ with covariance operator $\Sigma=\mathbb{E}[X\otimes_H X]$. Let $\hat{\Sigma}=\frac{1}{n}\sum_{i=1}X_i\otimes_H X_i$ be the empirical covariance operator. Define $$r(\Sigma):= \frac{\left(\mathbb{E}\Vert X\Vert_{H}\right)^2}{\Vert \Sigma\Vert_{\emph{op}}}.$$ Then for all $\tau\ge 1$ and $n\ge (r(\Sigma)\vee \tau)$, with probability at least $1-e^{-\tau}$,
$$\Vert \hat{\Sigma}-\Sigma\Vert_\emph{op}\le K_1\Vert \Sigma\Vert_{\emph{op}}\frac{\sqrt{r(\Sigma)}+\sqrt{\tau}}{\sqrt{n}},$$
where $K_1$ is a universal constant independent of $\Sigma$, $\tau$ and $n$.
\end{theorem}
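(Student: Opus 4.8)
The estimate is a known concentration bound for sample covariance operators, so in the paper it is simply invoked from \cite{Koltchinskii-17}; were one to reconstruct it, the plan would be to split $Z:=\Vert\hat\Sigma-\Sigma\Vert_{\mathrm{op}}$ into an in-expectation bound plus a deviation-around-the-mean bound. Using the Karhunen--Lo\`{e}ve expansion $X=\sum_{k\ge1}\sqrt{\lambda_k}\,\xi_k e_k$, with $(\lambda_k,e_k)$ the eigensystem of $\Sigma$ and $\xi_k$ i.i.d.\ $N(0,1)$, one realizes all the $X_i$ simultaneously and views $Z=\sup_{\Vert u\Vert_H\le1}\bigl|\tfrac1n\sum_{i=1}^n\langle u,X_i\rangle_H^2-\E\langle u,X\rangle_H^2\bigr|$ as a measurable function of the countable i.i.d.\ Gaussian family $(\xi_{ik})$. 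Recall $\E\Vert X\Vert_H^2=\trace(\Sigma)$, so $r(\Sigma)\le\trace(\Sigma)/\Vert\Sigma\Vert_{\mathrm{op}}$.

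First I would bound $\E Z$. Symmetrizing, $\E Z\le\tfrac2n\,\E\bigl\Vert\sum_{i=1}^n\varepsilon_i\,X_i\otimes X_i\bigr\Vert_{\mathrm{op}}$ with $(\varepsilon_i)$ Rademacher, and the operator norm is a supremum over the unit ball of $H$ of a second-order chaos. Controlling this supremum --- by generic chaining over the unit ball with the metric induced by $\Sigma^{1/2}$, or by decoupling together with a noncommutative Khintchine / matrix deviation inequality --- gives $\E Z\lesssim\Vert\Sigma\Vert_{\mathrm{op}}\bigl(\sqrt{r(\Sigma)/n}\vee r(\Sigma)/n\bigr)$, which under the hypothesis $n\ge r(\Sigma)$ reduces to $\E Z\lesssim\Vert\Sigma\Vert_{\mathrm{op}}\sqrt{r(\Sigma)/n}$.

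Next comes the deviation $Z-\E Z$, which is the technical heart. The obstacle is that $Z$ is \emph{not} globally Lipschitz in the Gaussians $(\xi_{ik})$ --- Gaussian quadratic forms are unbounded --- so neither bounded differences nor a bare application of Gaussian Lipschitz concentration works. One circumvents this by truncating each $\Vert X_i\Vert_H$ at a level of order $\sqrt{\trace(\Sigma)}$ (absorbing the failure event via a union bound, since $\Vert X_i\Vert_H$ concentrates around $\sqrt{\trace(\Sigma)}$) and applying Gaussian concentration to the truncated functional, equivalently by invoking a Bernstein-type tail for suprema of Gaussian chaoses. This yields a mixed sub-Gaussian / sub-exponential bound $\Pr\bigl(Z\ge\E Z+t\bigr)\le\exp\!\bigl(-c\,nt^2/(\Vert\Sigma\Vert_{\mathrm{op}}\E Z)\bigr)+\exp\!\bigl(-c\,nt/\Vert\Sigma\Vert_{\mathrm{op}}\bigr)$.

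Finally, choosing $t\asymp\E Z+\Vert\Sigma\Vert_{\mathrm{op}}\bigl(\sqrt{\tau/n}+\tau/n\bigr)$ makes this probability at most $e^{-\tau}$, and on the complementary event $Z\le\E Z+t\lesssim\Vert\Sigma\Vert_{\mathrm{op}}\bigl(\sqrt{r(\Sigma)/n}+\sqrt{\tau/n}+\tau/n\bigr)$; the assumptions $n\ge r(\Sigma)\vee\tau$ make the square-root terms dominate their linear counterparts, so $Z\le K_1\Vert\Sigma\Vert_{\mathrm{op}}(\sqrt{r(\Sigma)}+\sqrt{\tau})/\sqrt n$ for a universal constant $K_1$, as claimed.
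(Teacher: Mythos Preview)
You correctly identify that the paper does not prove this statement at all: it is listed among the auxiliary results and attributed to \cite{Koltchinskii-17} without argument, so there is nothing to compare against in the paper itself. Your reconstruction sketch (symmetrization plus chaining for $\E Z$, then a chaos-type deviation inequality around the mean, with the hypotheses $n\ge r(\Sigma)\vee\tau$ used to absorb the linear-in-$\tau/n$ and $r(\Sigma)/n$ terms into the square-root terms) is a faithful outline of the Koltchinskii--Lounici argument and is appropriate here.
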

\begin{lemma}\label{lem:term4relatedlemma}
For any bounded, self-adjoint positive operator $\Gamma$ on $\Ltwo$, 
$$ \E [\langle X, \Gamma X \rangle^2] \leq 3 \emph{\trace}^2(\Gamma C),$$
assuming $\emph{\trace}(C^{1/2}) < \infty$ with 
$C = \E [X \otimes X]$.
\end{lemma}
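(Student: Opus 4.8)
The plan is to reduce $\langle X,\Gamma X\rangle$ to a second‑order Gaussian chaos via the Karhunen–Lo\`eve expansion and then compute its second moment by Isserlis' (Wick's) formula. Let $(\xi_j,e_j)_{j\in\mathbb{N}}$ be the eigensystem of the trace‑class operator $C=\mathbb{E}[X\otimes X]$ and expand the centered Gaussian element $X=\sum_j\sqrt{\xi_j}\,z_j e_j$, where the $z_j$ are i.i.d.\ standard normal and the series converges in $L^2(\Omega;L^2(S))$. Put $\Gamma_{jk}:=\langle e_j,\Gamma e_k\rangle$; self‑adjointness of $\Gamma$ gives $\Gamma_{jk}=\Gamma_{kj}\in\mathbb{R}$ and $|\Gamma_{jk}|\le\|\Gamma\|$. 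Then $\langle X,\Gamma X\rangle=\sum_{j,k}\sqrt{\xi_j\xi_k}\,\Gamma_{jk}z_jz_k$, and the hypothesis $\mathrm{trace}(C^{1/2})<\infty$ enters precisely here: since $\sum_{j,k}\sqrt{\xi_j\xi_k}\,\mathbb{E}|z_jz_k|\le\|\Gamma\|\big(\sum_j\sqrt{\xi_j}\big)^2=\|\Gamma\|\,\mathrm{trace}^2(C^{1/2})<\infty$, the double series converges absolutely almost surely and in $L^1(\Omega)$, which legitimizes the rearrangements used below.

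Next I would compute the second moment through the truncations $X_N:=\sum_{j\le N}\sqrt{\xi_j}z_je_j$ and $Q_N:=\langle X_N,\Gamma X_N\rangle=\sum_{j,k\le N}\sqrt{\xi_j\xi_k}\,\Gamma_{jk}z_jz_k$. Because $X_N\to X$ in $L^2(\Omega;L^2(S))$ and all the variables involved lie in a fixed sum of Wiener chaoses (so Gaussian hypercontractivity and moment bounds make $L^2$‑ and $L^4$‑norms comparable), one gets $Q_N\to\langle X,\Gamma X\rangle$ in $L^2(\Omega)$, hence $\mathbb{E}[\langle X,\Gamma X\rangle^2]=\lim_N\mathbb{E}[Q_N^2]$. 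For each finite $N$, Isserlis' theorem $\mathbb{E}[z_jz_kz_lz_m]=\delta_{jk}\delta_{lm}+\delta_{jl}\delta_{km}+\delta_{jm}\delta_{kl}$ together with $\Gamma_{kj}=\Gamma_{jk}$ yields
\[
\mathbb{E}[Q_N^2]=\Big(\sum_{j\le N}\xi_j\Gamma_{jj}\Big)^2+2\sum_{j,k\le N}\xi_j\xi_k\,\Gamma_{jk}^2 .
\]
Noting that $\sqrt{\xi_j\xi_k}\,\Gamma_{jk}$ is the $(j,k)$ matrix entry of $M:=C^{1/2}\Gamma C^{1/2}$ in the basis $(e_j)$, and that $0\le M\le\|\Gamma\|C$ is trace‑class, letting $N\to\infty$ gives $\sum_j\xi_j\Gamma_{jj}=\mathrm{trace}(M)=\mathrm{trace}(\Gamma C)$ and $\sum_{j,k}\xi_j\xi_k\Gamma_{jk}^2=\|M\|_{HS}^2=\mathrm{trace}(M^2)=\mathrm{trace}\big((\Gamma C)^2\big)$, so that
\[
\mathbb{E}\big[\langle X,\Gamma X\rangle^2\big]=\mathrm{trace}^2(\Gamma C)+2\,\mathrm{trace}\big((\Gamma C)^2\big).
\]

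To conclude, I would use positivity of $\Gamma$: it forces $M=C^{1/2}\Gamma C^{1/2}\ge 0$, so its eigenvalues $(\lambda_i)_i$ are nonnegative and coincide with the nonzero eigenvalues of $\Gamma C$, whence $\mathrm{trace}(\Gamma C)=\sum_i\lambda_i$ and $\mathrm{trace}\big((\Gamma C)^2\big)=\sum_i\lambda_i^2$. The elementary inequality $\sum_i\lambda_i^2\le\big(\sum_i\lambda_i\big)^2$, valid for nonnegative sequences, then gives $\mathrm{trace}\big((\Gamma C)^2\big)\le\mathrm{trace}^2(\Gamma C)$, and therefore $\mathbb{E}[\langle X,\Gamma X\rangle^2]\le 3\,\mathrm{trace}^2(\Gamma C)$, as claimed. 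The only genuinely technical point is the interchange of expectation and infinite summation in the Isserlis step, i.e.\ the $L^2(\Omega)$‑convergence of the truncations $Q_N$; this is exactly what absolute convergence of the Karhunen–Lo\`eve double series (secured by $\mathrm{trace}(C^{1/2})<\infty$) together with Gaussian moment bounds delivers, while the remainder of the argument is the Wick formula and a one‑line inequality for nonnegative sequences.
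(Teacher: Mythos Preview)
Your argument is correct and follows essentially the same route as the paper: Karhunen--Lo\`eve expansion, Isserlis/Wick moment formula, and then the inequality $\Vert M\Vert_{HS}^2\le\trace^2(M)$ for the positive trace-class operator $M=C^{1/2}\Gamma C^{1/2}$. In fact, your Wick computation is more complete than the paper's: the paper only retains the pairing $\{i=j,\,k=\ell\}$ and arrives at $2\sum_i\lambda_i^2\langle\varphi_i,\Gamma\varphi_i\rangle^2+\big(\sum_i\lambda_i\langle\varphi_i,\Gamma\varphi_i\rangle\big)^2$, whereas you correctly keep all three pairings and obtain the exact identity $\E[\langle X,\Gamma X\rangle^2]=\trace^2(\Gamma C)+2\,\trace\big((\Gamma C)^2\big)$. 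Both versions are dominated by $3\,\trace^2(\Gamma C)$ via the same nonnegative-sequence inequality, so the final bound is unaffected; your derivation simply gives the sharper (and standard) intermediate formula for the second moment of a Gaussian quadratic form.
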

\begin{proof}
First note that by the Karhunen-Lo\'eve expansion, we have $X = \sum_i x_i \varphi_i$, where $(\varphi_i)_{i\in\mathbb{N}}$ and $(\lambda_i)_{i\in\mathbb{N}}$ are the eigenfunctions and eigenvalues of $C$, and $x_i$ are independent Gaussian random variables with $\E[x_i^2] =\lambda_i$. Hence, we have
\begin{align}
\E[\langle X, \Gamma X \rangle^2] &= \E \left[ \left\langle  \sum_i x_i \varphi_i, \sum_j x_j \Gamma \varphi_j  \right\rangle^2 \right] = \E \left[\sum_{i,j} x_i x_j \langle \varphi_i, \Gamma\varphi_j \rangle   \right]^2\nonumber\\
& \stackbin[]{(\ast)}{=}\sum_{i,j,k,\ell} \E [x_i x_j x_k x_\ell] \langle \varphi_i, \Gamma \varphi_j \rangle \langle \varphi_k, \Gamma \varphi_\ell \rangle \nonumber\\
&= \sum_i \E[x_i^4] \langle \varphi_i, \Gamma \varphi_i \rangle^2 +  \sum_{i\neq j} \E[x_i^2] \E[x_j^2] \langle \varphi_i, \Gamma \varphi_i \rangle \langle \varphi_j, \Gamma \varphi_j \rangle\nonumber\\
&= 3 \sum_i \lambda_i^2  \langle \varphi_i, \Gamma \varphi_i \rangle^2 +  \left( \sum_i \lambda_i  \langle \varphi_i, \Gamma \varphi_i \rangle\right)^2 - \sum_i \lambda_i^2  \langle \varphi_i, \Gamma \varphi_i \rangle^2\nonumber\\
& = 2 \sum_i \lambda_i^2  \langle \varphi_i, \Gamma \varphi_i \rangle^2 + \left( \sum_i \lambda_i  \langle \varphi_i, \Gamma \varphi_i \rangle\right)^2, \label{Eq:tempo}
\end{align}
where the exchange of expectation and summation in $(*)$ holds by Fubini's theorem if \begin{align}
\sum_{i,j,k,\ell} \E |x_ix_jx_k x_\ell| |\langle \varphi_i, \Gamma \varphi_j \rangle| |\langle \varphi_k, \Gamma \varphi_\ell \rangle| < \infty. \label{Eq:fubini}
\end{align}
We will later verify \eqref{Eq:fubini}. Note that
\begin{align}
\trace(\Gamma C)&  = \trace\left(\Gamma  \left( \sum_i \lambda_i \varphi_i \otimes \varphi_i \right) \right) \nonumber \\
&= \trace\left(\sum_i \lambda_i \left( \Gamma \varphi_i\right) \otimes \varphi_i \right)=\sum_i\lambda_i\langle \varphi_i,\Gamma\varphi_i\rangle.\label{Eq:temp1}
\end{align}
Furthermore, recall that the Hilbert-Schmidt norm of an operator $A$ is defined as $\|A\|^2_{HS}\coloneqq \sum_i\| Ae_i \|^2$, where $(e_i)_{i\in\mathbb{N}}$, is any orthonormal basis for $\Ltwo$. Hence, we have 
\begin{align*}
&\left\| \sum_i \lambda_i  \langle \varphi_i, \Gamma \varphi_i \rangle \left( \varphi_i \otimes \varphi_i \right) \right\|_{HS}^2\\
&=\left\langle  \sum_i \lambda_i  \langle \varphi_i, \Gamma \varphi_i \rangle \left( \varphi_i \otimes \varphi_i\right), \sum_j \lambda_j  \langle \varphi_j, \Gamma \varphi_j \rangle \left( \varphi_j \otimes \varphi_j\right) \right\rangle_{HS}\\
 &= \sum_{i,j} \lambda_i \lambda_j \langle \varphi_i, \Gamma \varphi_i\rangle \langle \varphi_j, \Gamma \varphi_j\rangle  \left\langle \varphi_i\otimes \varphi_i, \varphi_j \otimes \varphi_j \right\rangle_{HS}\\
&=  \sum_{i,j} \lambda_i \lambda_j \langle \varphi_i, \Gamma \varphi_i\rangle \langle \varphi_j, \Gamma \varphi_j\rangle  \left\langle \varphi_i, \varphi_j \right\rangle^2
 = \sum_i \lambda_i^2 \langle \varphi_i, \Gamma \varphi_i \rangle^2.
\end{align*}
This means
\begin{align}
\sum_i \lambda_i^2  \langle \varphi_i, \Gamma \varphi_i \rangle^2 &\le 
 \trace^2\left( \sum_i \lambda_i \langle \varphi_i, \Gamma \varphi_i \rangle \varphi_i \otimes \varphi_i \right)\nonumber
 = \left(\sum_i \lambda_i \langle \varphi_i, \Gamma \varphi_i \rangle \langle \varphi_i, \varphi_i \rangle\right)^2\nonumber\\
 &=\left(\sum_i \lambda_i \langle \varphi_i, \Gamma \varphi_i \rangle\right)^2 =\trace^2\left( \Gamma C \right),\label{Eq:temp2}
\end{align}
Combining \eqref{Eq:temp1} and \eqref{Eq:temp2} in \eqref{Eq:tempo} yields the result. 
We will now verify \eqref{Eq:fubini}. 
Note that
\begin{align*}
&~\sum_{i,j,k,\ell} \E |x_ix_jx_k x_\ell| |\langle \varphi_i, \Gamma \varphi_j \rangle| |\langle \varphi_k, \Gamma \varphi_\ell \rangle|\le\Vert\Gamma\Vert^2 \sum_{i,j,k,\ell} \E |x_ix_jx_k x_\ell|\\
\lesssim&~ \| \Gamma\|^2 \left( \sum_i \E[x_i^4] + \sum_{i\neq j} \E[|x_i|^3] \E[|x_j|]+\sum_{i \neq j}  \E[x_i^2] \E [x_j^2]\right.\\
&~ \left.\qquad\qquad\qquad+  \sum_{i \neq j \neq k \neq \ell} \E[|x_i|] \E[|x_j|] \E[|x_j|]\E[|x_\ell|] \right)\\
\le&~ \| \Gamma\|^2 \left( 3\sum_i\lambda_i^2  + \frac{4}{\pi} \sum_{i\neq j} \lambda_i^{3/2} \sqrt{\lambda_j} +  \sum_{i\ne j} \lambda_i\lambda_j+ \frac{4}{\pi^2} \left( \sum_i \sqrt{\lambda_i}\right)^4 \right)\\
\lesssim&~ \| \Gamma\|^2 \left( \sum_i\lambda_i^2  + \left(\sum_{i} \lambda_i^{3/2} \right) \left(\sum_j \sqrt{\lambda_j}\right)+  \left(\sum_{i} \lambda_i\right)^2+ \left( \sum_i \sqrt{\lambda_i}\right)^4 \right)\\
<&~ \infty,
\end{align*}
which completes the proof.
\end{proof}

\begin{lemma}\label{lem:theorem2relatedlemma2}
For $\beta \geq \alpha >1$, and $\gamma \geq \frac{\alpha}{\beta}$, we have
\begin{align*}
\sum_{i \in \mathbb{N}} \frac{i^{-\alpha}}{(i^{-\beta} + \lambda)^\gamma} \leq \lambda^{-\tfrac{1+\beta\gamma -\alpha}{\beta}} 2^{1-\tfrac{\alpha}{\beta}} \int_0^\infty  \frac{1}{1+y^\alpha}\,dy.
\end{align*}
\end{lemma}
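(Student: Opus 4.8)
The plan is to reduce the series to a single one-dimensional integral and then, by a sharp pointwise inequality, to a universal constant times $\int_0^\infty (1+y^\alpha)^{-1}\,dy$. This proceeds in three simplifications: first lower the exponent $\gamma$ to $\alpha/\beta$ at the level of individual summands, then replace the sum by an integral and rescale, and finally extract the constant $2^{1-\alpha/\beta}$.

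\textbf{Step 1: reducing $\gamma$ to $\alpha/\beta$ term by term.} First I would rewrite each summand by multiplying numerator and denominator by $i^{\beta\gamma}$, giving $\frac{i^{-\alpha}}{(i^{-\beta}+\lambda)^\gamma} = \frac{i^{\beta\gamma-\alpha}}{(1+\lambda i^\beta)^\gamma}$. Writing $w=\lambda i^\beta>0$ and using $i^{\beta\gamma-\alpha}=(w/\lambda)^{\gamma-\alpha/\beta}$ (the exponent $\gamma-\alpha/\beta$ being nonnegative by hypothesis), this equals $\lambda^{-(\gamma-\alpha/\beta)}\bigl(\tfrac{w}{1+w}\bigr)^{\gamma-\alpha/\beta}(1+w)^{-\alpha/\beta}\le \lambda^{-(\gamma-\alpha/\beta)}(1+\lambda i^\beta)^{-\alpha/\beta}$, since $0<w/(1+w)<1$. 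Summing, $\sum_{i\ge1}\frac{i^{-\alpha}}{(i^{-\beta}+\lambda)^\gamma}\le \lambda^{-(\gamma-\alpha/\beta)}\sum_{i\ge1}(1+\lambda i^\beta)^{-\alpha/\beta}$.

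\textbf{Step 2: integral comparison and rescaling.} The function $x\mapsto(1+\lambda x^\beta)^{-\alpha/\beta}$ is nonnegative and decreasing on $[0,\infty)$ (because $\lambda>0$, $\beta>0$, $\alpha/\beta>0$), so $\sum_{i\ge1}(1+\lambda i^\beta)^{-\alpha/\beta}\le\int_0^\infty(1+\lambda x^\beta)^{-\alpha/\beta}\,dx$; the integral is finite since the integrand decays like $\lambda^{-\alpha/\beta}x^{-\alpha}$ with $\alpha>1$. The substitution $t=\lambda^{1/\beta}x$ then yields $\int_0^\infty(1+\lambda x^\beta)^{-\alpha/\beta}\,dx=\lambda^{-1/\beta}\int_0^\infty(1+t^\beta)^{-\alpha/\beta}\,dt$.

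\textbf{Step 3: the sharp constant, and assembly.} It remains to prove $\int_0^\infty(1+t^\beta)^{-\alpha/\beta}\,dt\le 2^{1-\alpha/\beta}\int_0^\infty(1+t^\alpha)^{-1}\,dt$, which follows from the pointwise bound $(1+t^\beta)^{\alpha/\beta}\ge 2^{\alpha/\beta-1}(1+t^\alpha)$ for all $t\ge0$. To see this, set $\phi(t):=(1+t^\beta)^{\alpha/\beta}/(1+t^\alpha)$; it is invariant under $t\mapsto 1/t$, satisfies $\phi(0)=\lim_{t\to\infty}\phi(t)=1$, and a direct computation gives $\frac{d}{dt}\log\phi(t)=\alpha t^{\alpha-1}\frac{t^{\beta-\alpha}-1}{(1+t^\beta)(1+t^\alpha)}$, whose sign is that of $t^{\beta-\alpha}-1$; hence $\phi$ decreases on $(0,1]$ and increases on $[1,\infty)$, so $\min_{t\ge0}\phi(t)=\phi(1)=2^{\alpha/\beta-1}$ (the case $\beta=\alpha$ being trivial, $\phi\equiv1$). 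Combining Steps 1--3 and simplifying the exponent $-(\gamma-\alpha/\beta)-1/\beta=-\frac{1+\beta\gamma-\alpha}{\beta}$ gives exactly the asserted bound.

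\textbf{Main obstacle.} The only non-routine ingredient is the sharp pointwise inequality $(1+t^\beta)^{\alpha/\beta}\ge 2^{\alpha/\beta-1}(1+t^\alpha)$ in Step 3: it genuinely requires the $t\mapsto1/t$ symmetry of $\phi$ together with the sign analysis of $\frac{d}{dt}\log\phi$. A cruder power-mean estimate (e.g.\ $(1+t^\beta)^{1/\beta}\ge 2^{1/\beta-1}(1+t)$) would only produce a constant of order $2^{\alpha-\alpha/\beta}$, which blows up with $\alpha$ and is therefore too weak. Steps 1 and 2 are routine bookkeeping.
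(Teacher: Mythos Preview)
Your proof is correct and follows essentially the same route as the paper's: rewrite each term as $i^{\beta\gamma-\alpha}/(1+\lambda i^\beta)^\gamma$, use $\bigl(\tfrac{w}{1+w}\bigr)^{\gamma-\alpha/\beta}\le 1$ to reduce to $(1+\lambda i^\beta)^{-\alpha/\beta}$, pass to an integral, rescale, and finish with the pointwise inequality $(1+t^\beta)^{\alpha/\beta}\ge 2^{\alpha/\beta-1}(1+t^\alpha)$. The only notable difference is in how that last inequality is obtained: the paper observes that $x\mapsto x^{\alpha/\beta}$ is concave for $\alpha\le\beta$, so Jensen gives $\bigl(\tfrac{1+t^\beta}{2}\bigr)^{\alpha/\beta}\ge \tfrac{1+t^\alpha}{2}$ in one line, whereas you arrive at the same bound by a calculus analysis of $\phi(t)=(1+t^\beta)^{\alpha/\beta}/(1+t^\alpha)$. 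Your argument additionally shows the constant $2^{\alpha/\beta-1}$ is sharp (attained at $t=1$), which the Jensen route does not make explicit; on the other hand, the Jensen argument is shorter and makes your ``main obstacle'' remark somewhat overstated. A minor bonus of your ordering (reducing $\gamma$ to $\alpha/\beta$ \emph{before} the integral comparison) is that the summand is then genuinely monotone decreasing, so the bound $\sum_{i\ge1}f(i)\le\int_0^\infty f$ is immediate; the paper applies the integral comparison to the original, non-monotone summand without comment.
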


\begin{proof}
Note that 
\begin{align*}
&\sum_{i \in \mathbb{N}} \frac{i^{-\alpha}}{(i^{-\beta} + \lambda)^\gamma} = \sum_{i\in\mathbb{N}}   \frac{i^{\gamma\beta - \alpha}}{(1+ \lambda i^{\beta})^\gamma} \leq \int_0^\infty \frac{x^{\beta \gamma -\alpha}}{(1+\lambda x^\beta)^\gamma}\,dx \\
&= \int_0^\infty \frac{\lambda^{-\tfrac{(\beta\gamma - \alpha)}{\beta}} y^{\beta\gamma -\alpha}}{(1+y^\beta)^\gamma} \lambda^{-1/\beta}\, dy 
 = \lambda^{-\tfrac{(1+\beta\gamma - \alpha)}{\beta}} \int_0^\infty \frac{y^{\beta\gamma - \alpha}}{(1+y^\beta)^\gamma}\, dy,
\end{align*}
where
\begin{align*}
\int_0^\infty \frac{y^{\beta\gamma - \alpha}}{(1+y^\beta)^\gamma}\, dy  & = \int_0^\infty \frac{y^{\beta \left(\gamma - \frac{\alpha}{\beta}\right)}}{(1+y^\beta)^\gamma}\, dy  = \int_0^\infty \left( \frac{y^\beta}{1+y^\beta} \right)^{\gamma  - \frac{\alpha}{\beta}} \left(1+y^\beta \right)^{-\frac{\alpha}{\beta}}\, dy\\&\leq \int_0^\infty (1+y^\beta)^{-\frac{\alpha}{\beta}}\, dy\qquad\qquad(\because\,\, \gamma\ge\alpha/\beta).
\end{align*}
Therefore,
\begin{align*}
\int_0^\infty \frac{1}{(1+y^\beta)^{\alpha/\beta}}\, dy &= 2^{-\alpha/\beta} \int_0^\infty {\left(\frac{1}{2} + \frac{y^\beta}{2}\right)^{-\alpha/\beta}}\, dy
\leq 2^{-\alpha/\beta} \int_0^\infty {\left(\frac{1}{2} + \frac{y^\alpha}{2}\right)^{-1}}\, dy,  
\end{align*}
where the last inequality uses the fact that 
\begin{align*}
{\left(\frac{1}{2} + \frac{y^\beta}{2}\right)^{\alpha/\beta}} \geq {\left(\frac{1}{2} + \frac{y^\alpha}{2}\right)}
\end{align*}
by Jensen's inequality, because the function $f(x) = x^{\alpha/\beta},\,x\in[0,\infty)$ is concave for $\alpha \leq \beta$. Hence, we have
\begin{align*}
\int_0^\infty \frac{y^{\beta\gamma -\alpha}}{(1+y^\beta)^\gamma}\, dy \leq 2^{1-\frac{\alpha}{\beta}} \int_0^\infty (1+y^\alpha)^{-1} \, dy < \infty
\end{align*}
as long as $\alpha >1$. 
\end{proof}

\begin{lemma}\label{lem:theorem2relatedlemma1}
For any $0 \leq \alpha\leq \beta$,
\begin{align*}
\sup_{i\in\mathbb{N}}\left[ \frac{i^{-\alpha}}{i^{-\beta} +\lambda}\right] \leq \lambda^{\tfrac{\alpha -\beta}{\beta}}.
\end{align*}
\end{lemma}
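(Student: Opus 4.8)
The plan is to discard the restriction to $i\in\mathbb N$ and bound the expression over all reals $x>0$, which can only enlarge the supremum; since $\lambda>0$ and $\beta>0$, it then suffices to prove
\[
\sup_{x>0}\frac{x^{-\alpha}}{x^{-\beta}+\lambda}\le\lambda^{\frac{\alpha-\beta}{\beta}}.
\]
Setting $u:=x^{-\beta}\in(0,\infty)$ and $\gamma:=\alpha/\beta\in[0,1]$, this is equivalent to the single-variable inequality $u^{\gamma}/(u+\lambda)\le\lambda^{\gamma-1}$ for every $u>0$.

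To prove the latter I would rescale by $v:=u/\lambda>0$, reducing it to the scale-free claim $v^{\gamma}\le v+1$ for all $v>0$ and all $\gamma\in[0,1]$, which follows at once by cases: $v^{\gamma}\le1\le v+1$ when $v\le1$ (using $\gamma\ge0$), and $v^{\gamma}\le v\le v+1$ when $v\ge1$ (using $\gamma\le1$). Undoing the substitutions gives $x^{-\alpha}=u^{\gamma}\le\lambda^{\gamma-1}(u+\lambda)=\lambda^{(\alpha-\beta)/\beta}(x^{-\beta}+\lambda)$, which is the asserted bound.

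I do not expect any real obstacle; the only points meriting a word of care are the standing assumption $\lambda>0$ (needed for the rescaling $v=u/\lambda$ and for $\lambda^{\gamma-1}$ to be well defined) and the two degenerate endpoints, $\alpha=0$ (the bound reads $(x^{-\beta}+\lambda)^{-1}\le\lambda^{-1}$) and $\alpha=\beta$ (the bound reads $x^{-\beta}/(x^{-\beta}+\lambda)\le1=\lambda^{0}$), both trivially true. If a proof without case analysis is preferred, one may instead invoke weighted AM--GM: for $\gamma\in(0,1)$,
\[
u+\lambda=\gamma\cdot\tfrac{u}{\gamma}+(1-\gamma)\cdot\tfrac{\lambda}{1-\gamma}\ \ge\ u^{\gamma}\lambda^{1-\gamma}\,\big/\,\big(\gamma^{\gamma}(1-\gamma)^{1-\gamma}\big),
\]
combined with $\gamma^{\gamma}(1-\gamma)^{1-\gamma}\le1$ (from $\gamma\ln\gamma+(1-\gamma)\ln(1-\gamma)\le0$), to obtain $u^{\gamma}/(u+\lambda)\le\lambda^{\gamma-1}$ directly.
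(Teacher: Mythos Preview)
Your proof is correct and follows essentially the same route as the paper's: both pass from $i\in\mathbb N$ to $x>0$, rescale to remove $\lambda$, and bound a scale-free expression. The only cosmetic difference is in the last step: the paper writes $\frac{t^{1-\gamma}}{1+t}=\bigl(\tfrac{t}{1+t}\bigr)^{1-\gamma}(1+t)^{-\gamma}$ and bounds each factor by $1$, whereas you split into $v\le 1$ and $v\ge 1$ (your $v$ is the paper's $1/t$); both arguments are one-liners.
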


\begin{proof}
Note that
\begin{align*}
&\sup_{i\in\mathbb{N}}\left[ \frac{i^{-\alpha}}{i^{-\beta} +\lambda}\right]  = \sup_{i\in\mathbb{N}}\left[ \frac{i^{\beta-\alpha}}{1+ \lambda i^{\beta} }\right] \leq \sup_{x\in(0,\infty)} \frac{x^{\beta - \alpha}}{1+\lambda x^\beta} = \sup_{t \in(0,\infty)} \frac{(t/\lambda)^{\tfrac{\beta - \alpha}{\beta }}}{1+t}\\
&=  \lambda^{\tfrac{\alpha - \beta}{\beta }}  \sup_{t \in(0,\infty)} \frac{t^{\tfrac{\beta - \alpha}{\beta }}}{1+t}
 =  \lambda^{\tfrac{\alpha - \beta}{\beta }}  \sup_{t \in(0,\infty)} \left( \frac{t}{1+t}\right)^{\tfrac{\beta - \alpha}{\beta }} \frac{1}{(1+t)^{1 - \frac{(\beta-\alpha)}{\beta}}}\\
 &\leq  \lambda^{\tfrac{\alpha - \beta}{\beta }}  \sup_{t \in(0,\infty)} \frac{1}{(1+t)^{\alpha/\beta}} =  \lambda^{\tfrac{\alpha - \beta}{\beta }},
\end{align*}
which completes the proof.
\end{proof}

\begin{lemma}\label{lem:interpretlemmatemp}
$$\int_0^1 \cos(ax) \cos(bx) dx = \frac{b}{b^2-a^2} \cos(a) \sin(b) - \frac{a}{b^2-a^2} \sin(a) \cos(b).$$
\end{lemma}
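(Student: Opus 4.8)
The plan is to reduce the integral to elementary antiderivatives via the product-to-sum identity and then verify the resulting closed form by straightforward trigonometric algebra. Concretely, I would first write $\cos(ax)\cos(bx) = \tfrac12\big(\cos((a+b)x) + \cos((a-b)x)\big)$, which is valid for all real $a,b$, and integrate term by term over $[0,1]$ to obtain
\[
\int_0^1 \cos(ax)\cos(bx)\,dx = \frac12\left(\frac{\sin(a+b)}{a+b} + \frac{\sin(a-b)}{a-b}\right),
\]
where we use that $a\ne\pm b$ (so that $a\pm b\ne 0$), which is the implicit nondegeneracy hypothesis under which the stated right-hand side makes sense.

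Next I would place the two fractions over the common denominator $(a+b)(a-b)=a^2-b^2$, giving numerator $(a-b)\sin(a+b) + (a+b)\sin(a-b)$, and expand each sine using the angle-addition formulas $\sin(a\pm b) = \sin a\cos b \pm \cos a\sin b$. Collecting the $\sin a\cos b$ and $\cos a\sin b$ terms, the cross terms cancel and one is left with $2a\sin a\cos b - 2b\cos a\sin b$; dividing by $2(a^2-b^2)$ and multiplying numerator and denominator by $-1$ yields exactly $\dfrac{b\cos a\sin b - a\sin a\cos b}{b^2-a^2}$, which is the claimed identity.

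Since every step is a routine manipulation, there is no substantial obstacle; the only points requiring care are (i) keeping track of signs when expanding $\sin(a-b)$ and when rewriting $a^2-b^2$ as $-(b^2-a^2)$, and (ii) noting that the formula is an identity of rational-trigonometric expressions valid precisely on the locus $a^2\ne b^2$. As an alternative derivation that avoids the product-to-sum step, one may integrate by parts twice in $\int_0^1\cos(ax)\cos(bx)\,dx$ to obtain a linear equation in the integral itself with coefficient $1 - a^2/b^2$, and solve; this again requires $a^2\ne b^2$ and produces the same boundary terms after evaluating $\sin$ and $\cos$ at $0$ and $1$. I would present the product-to-sum route as the main proof for brevity.
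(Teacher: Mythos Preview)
Your proof is correct. The paper takes the integration-by-parts route you mention as an alternative: it sets $J=\int_0^1 \cos(ax)\cos(bx)\,dx$, integrates by parts twice (first differentiating $\cos(ax)$ and integrating $\cos(bx)$, then differentiating $\sin(ax)$ and integrating $\sin(bx)$) to obtain the linear equation $J=\frac{1}{b}\cos(a)\sin(b)-\frac{a}{b^2}\sin(a)\cos(b)+\frac{a^2}{b^2}J$, and solves for $J$. Your product-to-sum argument is shorter and avoids the self-referential step, at the cost of a small amount of sign bookkeeping when collapsing the numerator; the paper's approach produces the boundary terms directly in the form of the stated answer without needing the angle-addition expansions. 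Both require $a^2\ne b^2$, which you correctly flag as the implicit nondegeneracy hypothesis.
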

\begin{proof}
Define $J=\int_0^1 \cos(ax) \cos(bx) dx$. Then, we have
\begin{align*}
    J&=\left(\cos(ax) \frac{\sin(bx)}{b} \right)_0^1 + a \int_0^1 \sin(ax) \frac{\sin(bx)}{b} dx\\
    &=\frac{1}{b} \cos(a)\sin(b) +\frac{a}{b}\left[ \left(\sin(ax)\frac{\cos(bx)}{b} \right)_1^0+\frac{a}{b}\int_0^1 \cos(ax) \cos(bx) dx\right]\\
    &=\frac{1}{b} \cos(a) \sin(b) -\frac{a}{b^2}\sin(a)\cos(b) + \left(\frac{a}{b}\right)^2 J
\end{align*}
from which we get the desired result. 
\end{proof}
\begin{lemma}\label{lem:haar}
Let $\psi_m$ be as defined in~\eqref{eq:haar}. Then, we have
\[
\langle\psi_m(\cdot), \cos(i\pi\cdot) \rangle_{L^2} \leq \frac{4}{i\pi} 2^{\floor{\log_2 m}/2}.
\]
\end{lemma}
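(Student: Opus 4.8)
The plan is to unwind the dyadic structure of the Haar wavelet and reduce the inner product to a combination of three elementary cosine integrals, each controlled by $1/(i\pi)$. First I would write $m = 2^j + \ell - 1$ with $j = \floor{\log_2 m}$ and $1 \le \ell \le 2^j$, so that by \eqref{eq:haar} the function $\psi_m$ equals $+2^{j/2}$ on the left dyadic subinterval $[(\ell-1)2^{-j}, (\ell-\tfrac12)2^{-j}]$, equals $-2^{j/2}$ on the right dyadic subinterval $[(\ell-\tfrac12)2^{-j}, \ell 2^{-j}]$, and vanishes elsewhere. Setting $a := (\ell-1)2^{-j}$, $b := (\ell-\tfrac12)2^{-j}$ and $c := \ell 2^{-j}$, this gives
\[
\langle \psi_m, \cos(i\pi\cdot)\rangle_{L^2} = 2^{j/2}\left(\int_a^b \cos(i\pi x)\,dx - \int_b^c \cos(i\pi x)\,dx\right).
\]

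Next I would evaluate each of the two integrals using the antiderivative identity $\int_u^v \cos(i\pi x)\,dx = \tfrac{1}{i\pi}\bigl(\sin(i\pi v) - \sin(i\pi u)\bigr)$. Since the two subintervals share the endpoint $b$, the terms $\sin(i\pi b)$ reinforce, yielding
\[
\langle \psi_m, \cos(i\pi\cdot)\rangle_{L^2} = \frac{2^{j/2}}{i\pi}\Bigl(2\sin(i\pi b) - \sin(i\pi a) - \sin(i\pi c)\Bigr).
\]
Bounding each of the three sine terms in absolute value by $1$ and applying the triangle inequality then gives $\lvert\langle \psi_m, \cos(i\pi\cdot)\rangle_{L^2}\rvert \le \tfrac{4}{i\pi}\,2^{j/2} = \tfrac{4}{i\pi}\,2^{\floor{\log_2 m}/2}$, which is exactly the claimed bound (the stated inequality follows since the right-hand side is nonnegative).

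The argument is entirely elementary and no genuine obstacle is expected; the only point requiring care is the correct translation of the index $m$ into the scale $j = \floor{\log_2 m}$ and the position $\ell$, and keeping track of the fact that $\psi_m$ is nonzero precisely on the two adjacent dyadic subintervals sharing the midpoint $b$, which is what produces the coefficient $2$ in front of $\sin(i\pi b)$ and hence the constant $4$ in the bound.
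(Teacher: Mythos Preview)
Your proposal is correct and follows essentially the same approach as the paper's proof: both translate $m$ into the dyadic scale $j=\floor{\log_2 m}$ and position $\ell$, split the inner product over the two adjacent dyadic half-intervals, antidifferentiate $\cos(i\pi x)$, and bound the resulting combination $2\sin(i\pi b)-\sin(i\pi a)-\sin(i\pi c)$ by $4$. The only difference is cosmetic---you name the endpoints $a,b,c$ whereas the paper writes them out explicitly.
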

\begin{proof}
For a given $m$, consider $j=\floor{\log_2 m}$ and $\ell=m+1-2^j$. Then, we have
\begin{align*}
&\langle\psi_m(\cdot), \cos(i\pi\cdot) \rangle_{L^2} \\
=& \int_0^1 \psi_m(x)\cos(i\pi x) dx \\
=& \int_{\frac{\ell-1}{2^j}}^{\frac{\ell-1/2}{2^j}}2^{j/2}\cos(i\pi x) dx - \int_{\frac{\ell-1/2}{2^j}}^{\frac{\ell}{2^j}} 2^{j/2} \cos(i\pi x) dx\\
 =& \frac{2^{j/2}}{i\pi} \left[\sin(i\pi x) \right]_{\frac{\ell-1}{2^j}}^{\frac{\ell-1/2}{2^j}} - \frac{2^{j/2}}{i\pi} \left[\sin(i\pi x) \right]_{\frac{\ell-1/2}{2^j}}^{\frac{\ell}{2^j}}\\
=&\frac{2^{j/2}}{i\pi} \left[ \sin\left( \frac{i\pi(\ell-1/2)}{2^j} \right) -\sin\left( \frac{i\pi(\ell-1)}{2^j} \right) \right]\\
&\qquad\qquad- \frac{2^{j/2}}{i\pi} \left[\sin\left( \frac{i\pi\ell}{2^j} \right) -\sin\left( \frac{i\pi(\ell-1/2)}{2^j} \right) \right] \\
=&\frac{2^{j/2}}{i\pi}\left[ 2\sin\left( \frac{i\pi(\ell-1/2)}{2^j}\right) - \sin\left( \frac{i\pi(\ell-1)}{2^j}\right) - \sin\left( \frac{i\pi\ell}{2^j}\right) \right] \\
\leq& \frac{4}{i\pi}2^{j/2},
\end{align*}
thereby completing the proof. 
\end{proof}

\section{Bound on $\eta_{ij}$ in \eqref{Eq:etaij}}\label{subsec:etabound}
From \eqref{Eq:etaij}, we have
\begin{align*}
    \eta_{ij} = \sum_m b_m \theta_{mi}\theta_{mj} = \frac{1}{\pi^2}\sum_mb_m \frac{\omega_m}{\omega_m^2-i^2} \frac{\omega_m}{\omega_m^2-j^2} \sin^2(\pi \omega_m) (-1)^{i+j}
\end{align*}
which implies
\begin{align*}
    |\eta_{ij}| \lesssim \sqrt{\sum_m\left(\frac{\omega_m \sqrt{b_m}}{\omega_m^2-i^2}\right)^2} \sqrt{\sum_m\left(\frac{\omega_m \sqrt{b_m}}{\omega_m^2-j^2}\right)^2}.
\end{align*}
Consider
\begin{align*}
\sum_m\left(\frac{\omega_m \sqrt{b_m}}{\omega_m^2-j^2}\right)^2&=
    \sum_m \frac{\omega^2_m b_m}{(\omega_m+j)^2(\omega_m-j)^2} \leq \frac{1}{j^2} \sum_m \left(\frac{\omega_m}{\omega_m-j} \right)^2 b_m\\ &\lesssim \frac{1}{j^2} \sum_m m^{-(1+\delta)} \left( \frac{am+b}{am+b-j} \right)^2 \\
    &= \frac{1}{j^2}\sum_m m^{-(1+\delta)} \left( \frac{m+b/a}{m+(b-j)/a} \right)^2.
\end{align*}
We now consider two cases. First, we consider the case of \textbf{$b<0$}. Define $c\coloneqq (j-b)/a -\floor{(j-b)/a}$ and note that $0 <c <1$. Then, we have 
\begin{align}
    &\sum_m m^{-(1+\delta)} \left( \frac{m+b/a}{m+(b-j)/a} \right)^2\nonumber\\ &\lesssim \sum_m m^{-(1+\delta)} \left(\frac{m}{m-(j-b)/a} \right)^2
    = \sum_m \frac{m^{1-\delta}}{\left( m - (j-b)/a \right)^2}\nonumber\\
    &= \sum_{m\leq \floor{\frac{(j-b)}{a}}}  \frac{m^{1-\delta}}{\left( m - (j-b)/a \right)^2} + \sum_{m= \floor{\frac{(j-b)}{a}}+1}^\infty  \frac{m^{1-\delta}}{\left( m - (j-b)/a \right)^2}\nonumber\\
    &= \sum_{m\leq \floor{\frac{(j-b)}{a}}}  \frac{m^{1-\delta}}{\left( (j-b)/a-m \right)^2} + \sum_{m= \floor{\frac{(j-b)}{a}}+1}^\infty  \frac{m^{1-\delta}}{\left( m-(j-b)/a \right)^2}\nonumber\\
    &= \left[\frac{\floor{\frac{j-b}{a}}^{1-\delta}}{c^2} + \frac{\floor{\frac{j-b}{a}-1}^{1-\delta}}{(1+c)^2} + \cdots + \frac{1}{\left(\frac{j-b}{a} -1 \right)^2}  \right]\nonumber\\
    &\qquad\qquad+ \left[ \frac{\left( 1+\floor{\frac{j-b}{a}} \right)^{1-\delta}}{(1-c)^2} + \frac{\left( 2+\floor{\frac{j-b}{a}} \right)^{1-\delta}}{(2-c)^2} +\cdots\right]\nonumber\\
    &\leq  \left\lfloor{\frac{j-b}{a}}\right\rfloor^{1-\delta} \left(\frac{1}{c^2} + 1+ \frac{1}{2^2}+\frac{1}{3^2}+\cdots \right) +\frac{\left( 1+\floor{\frac{j-b}{a}} \right)^{1-\delta}}{(1-c)^2}\nonumber\\
    &\qquad\qquad\qquad+ \sum^\infty_{\ell=1}\frac{\left( \ell+1+\floor{\frac{j-b}{a}} \right)^{1-\delta}}{\ell^2}
    \nonumber\\
    &\leq\left\lfloor{\frac{j-b}{a}}\right\rfloor^{1-\delta} \left(\frac{1}{c^2}+\frac{\pi^2}{6} \right)+ \begin{cases}
    \frac{1+\floor{\frac{j-b}{a}}^{1-\delta}}{(1-c)^2} +\sum^\infty_{\ell=1}\frac{\left( \ell+1\right)^{1-\delta}+\floor{\frac{j-b}{a}}^{1-\delta}}{\ell^2},~\text{for}~\delta\leq 1\nonumber\\
    \frac{1}{(1-c)^2} + \sum^\infty_{\ell=1}\frac{1}{\ell^2},~\text{for}~\delta >1
    \end{cases}\nonumber\\
    &\lesssim\left\lfloor{\frac{j-b}{a}}\right\rfloor^{1-\delta} + C_1,\nonumber
\end{align}
where we have used the fact that for $\delta \leq 1$, 
\[
\sum_{\ell=1}^\infty \frac{(\ell+1)^{1-\delta}}{\ell^2} \leq \sum_{\ell=1}^\infty \frac{\ell^{1-\delta}+1}{\ell^2} = \frac{\pi^2}{6} + \sum_{\ell=1}^\infty\frac{1}{\ell^{1+\delta}} < \infty,
\]
and 
$C_1$ is some constant independent of the index $j$. Hence
\begin{align*}
\sum_m \frac{\omega^2_m b_m}{(\omega_m+j)^2(\omega_m-j)^2}  &\lesssim \frac{1}{j^2} \left[ \left\lfloor{\frac{j-b}{a}}\right\rfloor^{1-\delta} + C_1 \right]\leq \frac{1}{j^2} \left[ \left(\frac{j-b}{a} \right)^{1-\delta}+C_1\right]\\ &\lesssim j^{-\min(1+\delta,2)}.
\end{align*}
Next we consider the case of \textbf{$b>0$}. Note that for $j<b$, 
\begin{align*}
\sum_m m^{-(1+\delta)} \left( \frac{m+b/a}{m+ (b-j)/a} \right)^2 &\leq \sum_m m^{-(1+\delta)} \left(\frac{m+b/a}{m} \right)^2\\ 
&= \sum_m m^{-(1+\delta)} \left(1+\frac{b}{am} \right)^2\\
& \lesssim \sum_m m^{-(1+\delta)} \left[ 1+ \frac{1}{m^2}\right] \coloneqq C_2 < \infty.
\end{align*}
Now, for $j>b$, we have
\begin{align*}
    &\sum_m m^{-(1+\delta)} \left( \frac{m+b/a}{m+(b-j)/a}\right)^2= \sum_m m^{-(1+\delta)}\frac{\left( m+b/a\right)^2}{\left(m - (j-b)/a
    \right)^2}\\
    &\lesssim \sum_m m^{-(1+\delta)} \frac{m^2+(b/a)^2}{\left(m - (j-b)/a
    \right)^2}\\
    &=\sum_m \frac{m^{1-\delta}}{\left( m - (j-b)/a \right)^2} + \left(\frac{b}{a}\right)^2 \sum_m \frac{m^{-(1+\delta)}}{\left(m-(j-b)/a\right)^2}\\
    &\leq \left[1+\left(\frac{b}{a} \right)^2 \right] \sum_m \frac{m^{1-\delta}}{\left( m-(j-b)/a\right)^2}\lesssim \left\lfloor{\frac{j-b}{a}}\right\rfloor^{1-\delta}+C_1.
\end{align*}
Therefore, 
\begin{align*}
     \sum_m m^{-(1+\delta)} \left( \frac{m+b/a}{m+(b-j)/a}\right)^2 \lesssim \begin{cases}
     C_2,~\text{for}~ j <b,\\
     \floor{\frac{j-b}{a}}^{1-\delta}+C_1,~\text{for}~ j >b
     \end{cases},
\end{align*}
and consequently
\begin{align*}
    \sum_m \frac{\omega^2_m b_m}{(\omega_m+j)^2(\omega_m-j)^2}\lesssim \begin{cases}
    j^{-2},~\text{for}~j<b,\\
    j^{-\min(1+\delta,2)},~\text{for}~j>b
    \end{cases} \lesssim j^{-\min(1+\delta,2)}.
\end{align*}
Putting everything together yields
\[
\eta_{ij} = (ij)^{-\min\left(1, \tfrac{1+\delta}{2} \right)}.
\]


\end{document}